\documentclass[12pt]{amsart}
\usepackage[margin=1in]{geometry}
\usepackage{latexsym}

\usepackage[utf8]{inputenc}
\usepackage{amsfonts}
\usepackage{amsmath}
\usepackage{amssymb}
\usepackage{amsthm}
\usepackage{commath}
\usepackage{subfig}
\usepackage{float}
\usepackage{algorithm}
\usepackage{algorithmic}
\usepackage[hyphens]{url}
\usepackage[hyphenbreaks]{breakurl}
\usepackage[colorlinks]{hyperref}
\hypersetup{
	linkcolor=NavyBlue,
	citecolor=NavyBlue,
	filecolor=Red,
	urlcolor=NavyBlue,
	menucolor=Red,
	runcolor=Red}
\usepackage{thmtools}
\usepackage{thm-restate}
\usepackage{cleveref}
\usepackage[hyphens]{url}
\usepackage[hyphenbreaks]{breakurl}

\usepackage[dvipsnames]{xcolor}

\usepackage{todonotes}

\usepackage[style=numeric]{biblatex} 
\addbibresource{references.bib}
\usepackage{amsmath}

\theoremstyle{definition}
\newtheorem{counter}{ctr}[section]
\newtheorem{theorem}[counter]{Theorem}
\newtheorem{definition}[counter]{Definition}
\newtheorem{conjecture}[counter]{Conjecture}
\newtheorem{corollary}[counter]{Corollary}
\newtheorem{proposition}[counter]{Proposition}
\newtheorem{lemma}[counter]{Lemma}

\newtheorem{question}[counter]{Question}

\newcommand{\N}{\mathbb{N}}

\newcommand{\R}{\mathbb{R}}

\newcommand{\E}{\mathbb{E}}
\newcommand{\Prob}{\mathbb{P}}

\newcommand{\Var}{\text{var}}

\begin{document}

\title{Towards the Gaussianity of Random Zeckendorf Games}

\author[Cheigh, Dantas e Moura, Jeong, Lehmann Duke, Milgrim, Miller, Ngamlamai]{Justin Cheigh, Guilherme Zeus Dantas e Moura, Ryan Jeong, Jacob Lehmann Duke, Wyatt Milgrim, Steven J. Miller, Prakod Ngamlamai}

\maketitle

\begin{abstract}
    Zeckendorf proved that any positive integer has a unique decomposition as a sum of non-consecutive Fibonacci numbers, indexed by $F_1 = 1, F_2 = 2, F_{n+1} = F_n + F_{n-1}$. Motivated by this result, Baird, Epstein, Flint, and Miller \cite{baird2018zeckendorf} defined the two-player Zeckendorf game, where two players take turns acting on a multiset of Fibonacci numbers that always sums to $N$. The game terminates when no possible moves remain, and the final player to perform a move wins. Notably, \cite{baird2018zeckendorf} studied the setting of random games: the game proceeds by choosing an available move uniformly at random, and they conjecture that as the input $N \to \infty$, the distribution of random game lengths converges to a Gaussian.
    
    We prove that certain sums of move counts is constant, and find a lower bound on the number of shortest games on input $N$ involving the Catalan numbers. The works \cite{baird2018zeckendorf} and Cuzensa et al. \cite{cusenza2020bounds} determined how to achieve a shortest and longest possible Zeckendorf game on a given input $N$, respectively: we establish that for any input $N$, the range of possible game lengths constitutes an interval of natural numbers: every game length between the shortest and longest game lengths can be achieved. 
    
    We further the study of probabilistic aspects of random Zeckendorf games. We study two probability measures on the space of all Zeckendorf games on input $N$: the uniform measure, and the measure induced by choosing moves uniformly at random at any given position. Under both measures that in the limit $N \to \infty$, both players win with probability $1/2$. We also find natural partitions of the collection of all Zeckendorf games of a fixed input $N$, on which we observe weak convergence to a Gaussian in the limit $N \to \infty$. We conclude the work with many open problems. 
\end{abstract}

\tableofcontents

\section{Introduction}

The Fibonacci numbers are widely considered to be the most interesting and well-known recursive sequence in mathematics. In this article, we shall index the Fibonacci numbers by $F_1 = 1$, $F_2 = 2$, and for general $n \geq 3$, $F_n = F_{n-1} + F_{n-2}$. Zeckendorf proved the following fundamental theorem: the decomposition in the proceeding theorem is referred to as the \textbf{Zeckendorf decomposition} of the positive integer $N$.
\begin{theorem}[\cite{zeckendorf1972representations}]
Every positive integer $N$ can be decomposed uniquely into a sum of distinct, non-consecutive Fibonacci numbers.
\end{theorem}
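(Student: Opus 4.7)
The plan is to prove the two halves of the statement separately: \emph{existence} of a Zeckendorf decomposition, which I will handle by strong induction on $N$ via a greedy choice, and \emph{uniqueness}, which will follow from a sharp upper bound on the value of any non-consecutive Fibonacci sum with prescribed largest index. Throughout, I will use the indexing convention of the paper, $F_1 = 1$, $F_2 = 2$, $F_n = F_{n-1} + F_{n-2}$ for $n \ge 3$, and keep a careful eye on the base cases, since this convention suppresses the usual $F_2 = 1$ term that appears in some references.

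For existence, I will proceed by strong induction on $N$. The base cases $N = 1 = F_1$ and $N = 2 = F_2$ are immediate. For $N \ge 3$, let $F_k$ denote the largest Fibonacci number with $F_k \le N$, and set $N' := N - F_k$. The key observation is that $N' < F_{k-1}$: otherwise $N = F_k + N' \ge F_k + F_{k-1} = F_{k+1}$, contradicting the maximality of $F_k$. Since $N' < N$, the inductive hypothesis gives a non-consecutive Fibonacci decomposition of $N'$, and every Fibonacci number appearing in it has index at most $k-2$ (because its value is at most $N' < F_{k-1}$). Appending $F_k$ therefore produces a decomposition of $N$ whose index set is still non-consecutive and contains distinct terms.

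For uniqueness, the central ingredient is the following auxiliary bound, which I will prove by a short induction on $s$: if $b_1 > b_2 > \cdots > b_s \ge 1$ are integers with $b_i - b_{i+1} \ge 2$ for every $i$, then
\[
F_{b_1} + F_{b_2} + \cdots + F_{b_s} \;<\; F_{b_1 + 1}.
\]
The base case $s = 1$ is trivial, and the inductive step uses $b_2 + 1 \le b_1 - 1$ together with the Fibonacci recursion: $F_{b_1} + (F_{b_2} + \cdots + F_{b_s}) < F_{b_1} + F_{b_2 + 1} \le F_{b_1} + F_{b_1 - 1} = F_{b_1 + 1}$. Granted this bound, suppose $N$ admits two Zeckendorf decompositions $N = F_{a_1} + \cdots + F_{a_r} = F_{b_1} + \cdots + F_{b_s}$ with strictly decreasing indices in each. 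Cancelling common terms from both sides, I may assume the two index sequences are disjoint; if both sides are empty there is nothing to show, otherwise assume without loss of generality that the largest remaining index $a_1$ satisfies $a_1 > b_1$. Then the left-hand side is at least $F_{a_1} \ge F_{b_1 + 1}$, while the auxiliary bound forces the right-hand side to be strictly less than $F_{b_1 + 1}$, a contradiction.

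The step I expect to require the most care is the auxiliary inequality and its interaction with the indexing convention $F_1 = 1, F_2 = 2$: the smallest admissible index is $1$, and the inductive step must be verified to handle the case $b_s = 1$ and $s = 2$ with $b_1 \ge 3$ cleanly. The other potential subtlety is ensuring in the existence argument that the recursively obtained decomposition of $N'$ cannot include $F_{k-1}$, which is exactly where the inequality $N' < F_{k-1}$ (rather than merely $N' \le F_{k-1}$) is needed. Beyond these bookkeeping points, both halves of the proof are short and essentially forced by the Fibonacci recursion.
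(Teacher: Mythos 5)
This theorem is quoted from Zeckendorf's original paper and is not proved in the present article, so there is no in-paper argument to compare against; your proof is the standard one (greedy/strong-induction existence plus the telescoping bound $F_{b_1}+\cdots+F_{b_s}<F_{b_1+1}$ for non-consecutive indices, followed by cancellation of common terms) and it is correct under the paper's indexing $F_1=1$, $F_2=2$. The only bookkeeping items worth a sentence in a full write-up are the degenerate cases you glide over: $N'=0$ in the existence step (handled by the empty decomposition), and the case where exactly one side is empty after cancelling common terms in the uniqueness step (a positive sum cannot equal zero). Neither affects the validity of the argument.
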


Inspired by this result, the authors of \cite{baird2018zeckendorf} constructed the \textbf{two-player Zeckendorf game}.
\begin{definition}[\cite{baird2018zeckendorf}]
Given input $N \in \mathbb N$, the Zeckendorf game is played on a multiset of Fibonacci numbers, initialized at $\mathcal S = \{F_1^N\}$. On each turn, a player can act on the multiset by performing one of the following moves if it is available.
\begin{enumerate}
    \item If we have two consecutive Fibonacci numbers $F_{k-1}, F_k$ for some $k \geq 2$, then we can replace them by $F_{k+1}$, denoted $F_{k-1} \wedge F_k \to F_{k+1}$.
    \item If we have two instances of the same Fibonacci number $F_k$, then
    \begin{enumerate}
        \item If $k=1$, we can play $F_1 \wedge F_1 \to F_2$.
        \item If $k=2$, we can play $F_2 \wedge F_2 \to F_1 \wedge F_3$.
        \item If $k \geq 3$, we can play $F_k \wedge F_k \to F_{k-2} \wedge F_{k+1}$.
    \end{enumerate}
\end{enumerate}
The two players alternate turns until no playable moves remain. The last player to move wins the game. 
\end{definition}
Observe that the moves of the game are consistent with the Fibonacci recurrence: we either \textit{combine} two consecutive terms, or \textit{split} terms with multiple instances. Perhaps more intuitively, we can understand the game as acting on a row of bins, with bin $k$ corresponding to the Fibonacci number $F_k$ and its height being the multiplicity of $F_k$ in the multiset.

\subsection{Prior Work}
The article \cite{baird2018zeckendorf} introduces the two-player Zeckendorf game, determine upper and lower bounds on the length of a game on input $N$ (showing in particular that the game always terminates), and shows non-constructively that Player 2 has the winning strategy for all $N \geq 2$. In particular, they provide the following explicit formula for the length of the shortest Zeckendorf game on input $N$, achieved by only playing combine moves.
\begin{theorem}[\cite{baird2018zeckendorf}] \label{shortest_game}
The number of combine moves in any Zeckendorf game on input $N$ is is $N-Z(N)$. Furthermore, any such shortest game terminates in $N - Z(N)$ moves, where $Z(N)$ is the number of terms in the Zeckendorf decomposition of $N$.
\end{theorem}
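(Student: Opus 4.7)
The plan pairs a potential-function invariance (which gives both the count of combine moves and the lower bound on game length) with an explicit construction of a game achieving that lower bound.

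For the invariance, introduce the potential $\phi(\mathcal S):=|\mathcal S|$, the size of the multiset counted with multiplicity. A case inspection of the four move types shows that the two kinds of \emph{combine moves} --- the type (1) move $F_{k-1}\wedge F_k\to F_{k+1}$ and the type (2a) move $F_1\wedge F_1\to F_2$ --- each decrease $\phi$ by exactly one, while the \emph{split moves} (2b) and (2c) each replace a pair of entries by a pair of entries and so preserve $\phi$. A terminal position admits no available move, hence contains no two entries with consecutive Fibonacci indices and no repeated entry; by the uniqueness part of Zeckendorf's theorem it is exactly the Zeckendorf decomposition of $N$, of size $Z(N)$. Since $\phi$ starts at $N$, ends at $Z(N)$, and only combine moves change $\phi$, the number of combine moves in any game is exactly $N-Z(N)$. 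In particular the total game length is at least $N-Z(N)$, with equality if and only if no split move is ever played.

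To realize this lower bound I would run an \emph{incremental Zeckendorf} strategy on $\{F_1^N\}$: conceptually absorb the $N$ initial copies of $F_1$ one at a time, maintaining the invariant that the already-absorbed portion of the multiset equals the Zeckendorf decomposition of the number of copies absorbed so far. Absorbing the $(i+1)$-th copy triggers a short cascade: if $F_1\in Z(i)$, play $F_1\wedge F_1\to F_2$ (type (2a)); otherwise, if $F_2\in Z(i)$, play $F_1\wedge F_2\to F_3$ (type (1)); then iterate the rule ``whenever the freshly created $F_j$ sits beside a present $F_{j+1}$, combine them to $F_{j+2}$.'' The cascade terminates when the newest term has no consecutive neighbor in the remaining part of $Z(i)$, at which point the absorbed portion equals $Z(i+1)$.

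The only delicate verification, which I expect to be the main obstacle, is to check that the cascade never forces a split, i.e., never creates a duplicate of some $F_k$ with $k\ge 2$. This goes by a short induction along the cascade: the term $F_{j+2}$ produced at any step duplicates an existing element only if $F_{j+2}\in Z(i)$, but that very step just consumed an $F_{j+1}\in Z(i)$, so the non-consecutivity property of Zeckendorf indices forces $F_{j+2}\notin Z(i)$, making the new $F_{j+2}$ unique in the current multiset. Iterating the strategy for all $N$ absorptions then produces a legal game of length exactly $N-Z(N)$, attaining the lower bound and completing the proof.
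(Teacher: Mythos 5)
Your proof is correct. Note that the paper does not prove this theorem itself --- it is quoted from \cite{baird2018zeckendorf} --- but the paper does effectively re-derive it in Proposition \ref{all_combines}, and the comparison is instructive. Your counting argument is identical to the paper's: a combine move removes one token, a split move preserves the token count, and the terminal configuration is forced (by Zeckendorf uniqueness) to be the Zeckendorf decomposition, so every game contains exactly $N-Z(N)$ combine moves and a shortest game is exactly an all-combine game. Where you diverge is in the existence of an all-combine game: the paper gets this from Proposition \ref{all_states_reachable}, which runs the game \emph{in reverse} from the Zeckendorf decomposition, repeatedly un-combining the outermost token and then reversing the move sequence; you instead build the game \emph{forward} by absorbing the $N$ units one at a time and carrying out the carry-cascade that increments a Zeckendorf representation. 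Your forward construction requires the extra verification you correctly identify and dispatch (the cascade never creates a duplicate, because consuming $F_{j+1}\in Z(i)$ rules out $F_{j+2}\in Z(i)$ by non-consecutivity), whereas the reverse-play argument gets legality for free; on the other hand, your cascade is exactly the mechanism the paper later needs anyway in Lemma \ref{add_1}, so your route has the advantage of unifying the existence proof with the incremental structure used elsewhere. Both arguments are sound.
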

The works \cite{li2020deterministic} and \cite{cusenza2020bounds} successively improved the upper bound of \cite{baird2018zeckendorf} on the length of a Zeckendorf game on fixed input $N$; the former article finds a deterministic game which has longest possible length for input $N$, while the latter generalizes this paradigm. We frequently make use of the following two results from \cite{cusenza2020bounds} in our arguments. We note that although the following result provides a strategy to achieve a longest game, finding a convenient closed form for the length of the longest game for non-Fibonacci input $N$ remains open (with the case of Fibonacci input treated by \cite{cusenza2020bounds}).
\begin{theorem}[\cite{cusenza2020bounds}] \label{longest_game}
The longest game on any $N$ is achieved by applying split moves or combine $1$s (in any order) whenever possible, and, if there is no split or combine $1$ move available, combine consecutive indices from smallest to largest.
\end{theorem}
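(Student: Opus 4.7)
The plan is to reduce the claim to a statement about split moves only, and then prove the optimality of the greedy rule via two exchange lemmas. By \Cref{shortest_game}, every Zeckendorf game on input $N$ uses exactly $N - Z(N)$ combine moves (counting combine 1s among the combines), so the game length equals $N - Z(N) + s$ where $s$ is the number of split moves performed. The greedy strategy in \Cref{longest_game} therefore achieves the longest game if and only if it maximizes $s$ over all legal plays, and this is what I would prove.

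The first lemma (Split/Combine-1 Priority) asserts that whenever a split or combine 1 move $m^*$ is available alongside some non-combine-1 combine move $m$, playing $m^*$ first dominates: for every continuation after applying $m$ first, there is a continuation after applying $m^*$ first achieving at least as many total splits. When $m^*$ and $m$ act on disjoint bins, the two moves commute and there is nothing to check. The genuine cases occur when $m$ consumes one of the two copies of the Fibonacci number that $m^*$ would split, and a short local simulation shows that the split-first ordering preserves and typically enhances future split opportunities, since the split injects new $F_{k-2}$ and $F_{k+1}$ terms that may seed further doubles that a premature combine would destroy.

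The second lemma (Smallest-Index Combine) states that, when no splits or combine 1s are available, combining the smallest available consecutive pair dominates any other combine in eventual split count. The heuristic is that a combine at index $k$ can produce a double at index $k+1$ that triggers a cascade of splits at higher indices, whereas combining at a higher index first forfeits this potential and leaves a gap no subsequent move can bridge. I would prove this by another local exchange: take the next combine in any optimal continuation and show that a combine at a smaller index can always be swapped to occur earlier without reducing the split count, then iterate to bring the smallest-index combine to the front.

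The main obstacle is the case analysis in the first exchange lemma when $m^*$ and $m$ overlap in a shared bin, because the two orderings then lead to genuinely distinct positions and dominance must be verified by tracking several future steps rather than by a one-move swap. I expect this to be manageable either by strong induction on $N$ (with Fibonacci inputs, already treated in \cite{cusenza2020bounds}, as the base case) or by defining a suitable potential function weighting each bin by a monotone function of its index and showing the greedy choice never decreases it; the delicate point is choosing this weighting so that both the split and the combine-1 moves are captured uniformly.
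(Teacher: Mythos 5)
First, a contextual note: the paper does not prove this statement at all---it is imported verbatim from \cite{cusenza2020bounds} and used as a black box---so there is no in-paper argument to compare yours against, and your proposal must stand on its own. Your opening reduction is correct and is the right first move: every combine move (including $C_1$) decreases the token count by exactly one and every split preserves it, so by \Cref{shortest_game} the number of combines is the invariant $N - Z(N)$ and maximizing game length is equivalent to maximizing the number of splits.

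The genuine gap is that both exchange lemmas are asserted rather than proved, and the assertion is weakest exactly where the theorem is hard. In the first lemma you write that when $m^*$ and $m$ share a bin, ``a short local simulation shows that the split-first ordering preserves and typically enhances future split opportunities''---but an exchange/dominance argument must hold \emph{always}, not typically, and in the overlapping case the two orderings reach genuinely different multisets (e.g.\ with $h_k = 2$ and $h_{k+1}=1$, playing $S_k$ first zeroes out bin $k$ and kills the competing $C_{k+1}$ entirely, while playing $C_{k+1}$ first destroys the split). From such divergent positions one cannot conclude dominance by a one-move swap; one needs either a full induction on positions or a quantitative invariant, and you defer precisely this step to a future ``strong induction or potential function'' without specifying either. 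The same objection applies to the second lemma: reordering combines changes which splits become available in between, so the claim that the smaller-index combine ``can always be swapped to occur earlier without reducing the split count'' is exactly the content that needs proof. The standard way to close this gap---and the one consistent with the machinery this paper itself develops in \Cref{movesum} and \Cref{upperboundours}---is to exhibit weightings of the bins under which each move changes a weighted token sum by a controlled amount, thereby upper-bounding the number of splits independently of play, and then verify that the greedy strategy attains that bound; your proposal gestures at this (``a monotone function of its index'') but does not construct the weights or check that the greedy play is tight against them. As written, this is a plausible outline with the load-bearing steps missing.
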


\begin{theorem}[\cite{cusenza2020bounds}] \label{type_a_only}
A Zeckendorf game on input $N$ can be played with strictly splitting and combine $1$ moves if and only if $N = F_k - 1$ for some $k \geq 2$.
\end{theorem}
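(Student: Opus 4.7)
The plan is to prove the biconditional in both directions: constructively for sufficiency, and via a realizability obstruction for necessity.

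\textbf{Sufficiency.} For $N = F_k - 1$, I construct a restricted game by induction on $k$, with trivial base cases $k \in \{2, 3\}$. The key property is that the Zeckendorf decomposition $F_k - 1 = F_{k-1} + F_{k-3} + \cdots$ is maximally dense: every consecutive pair of summands has index gap exactly $2$. Starting from $\{F_1^{F_k - 1}\}$, I interleave combine-$1$ moves with splits at level $2$, which produce the $F_1$'s needed to fuel further combines, and cascade splits upward through the indices (level $3$, level $4$, and so on). The density of the target decomposition ensures that a split or combine-$1$ is always available at every intermediate state, so no type-$1$ combine is ever forced until termination.

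\textbf{Necessity.} Suppose a restricted game exists on input $N$. Let $b$ denote the number of combine-$1$ moves and $s_k$ the count of splits at level $k$. Balancing multiplicities of each $F_j$ over the course of the game yields $b = N - Z(N)$ and the linear recurrence $s_{j+2} = 2 s_j - s_{j-1} + z_j$ for $j \geq 3$ (together with boundary conditions at $j = 1, 2$), where $z_j$ is the coefficient of $F_j$ in the Zeckendorf decomposition of $N$. Solving in closed form, one obtains $s_k = \sum_{j \geq k} z_j (F_{j-k+1} - 1)$, which is automatically non-negative for every $N$. Thus the arithmetic constraints alone do not force $N = F_k - 1$; the real obstruction is combinatorial realizability of the move counts as a valid sequence. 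I extract it by analyzing the reverse game from $\mathrm{Zeck}(N)$ toward $\{F_1^N\}$: reverse combine-$1$ requires an $F_2$ to be present, a reverse split at level $2$ requires both $F_1$ and $F_3$, and a reverse split at level $k \geq 3$ requires a pair $F_{k-2}, F_{k+1}$ with index gap exactly $3$. I argue that whenever $\mathrm{Zeck}(N)$ has any gap $\geq 4$ between consecutive indices, or is a singleton at index $\geq 3$, no sequence of reverse restricted moves avoids a stuck multiset. Consequently, only Zeckendorf decompositions with all gaps equal to $2$ and smallest index in $\{1, 2\}$ are compatible with successful reversal, and these are precisely the decompositions of $F_k - 1$.

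\textbf{Main obstacle.} The subtle step is the reverse-game impossibility claim for $N \ne F_k - 1$, since reverse moves can temporarily introduce new Fibonacci indices and alter the gap structure mid-play. I expect to handle this by induction on the number of Zeckendorf summands, or on the largest index $K$ with $z_K = 1$, carefully tracking how the multiset's gap structure evolves under each reverse move. Alternatively, one might identify a monovariant preserved by reverse restricted moves that distinguishes $\mathrm{Zeck}(N)$ from $\{F_1^N\}$ unless the $F_k - 1$ pattern holds.
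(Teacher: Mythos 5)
First, a point of orientation: the paper does not prove this statement at all --- Theorem \ref{type_a_only} is imported verbatim from \cite{cusenza2020bounds} and used as a black box --- so there is no in-paper proof to compare against, and your attempt must stand on its own.

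On its own terms, there is a genuine gap in the necessity direction. Your arithmetic setup is correct and well worth having: the multiplicity-balance equations do give $MC_1 = N - Z(N)$ and the recurrence $s_{j+2} = 2s_j - s_{j-1} + z_j$, and you are right that the resulting move counts are non-negative for every $N$ (e.g.\ for $N=3$ one gets $b=2$, $s_2=1$, which is consistent arithmetically but not realizable), so the obstruction is indeed order-of-play realizability. But the case analysis you then propose --- ruling out decompositions with a gap $\geq 4$ or a singleton at index $\geq 3$ --- does not imply your stated conclusion that all gaps equal $2$. Zeckendorf decompositions with a gap of exactly $3$ slip through: for instance $N = 6 = F_1 + F_4$ is neither a singleton nor has a gap $\geq 4$, yet $6 \neq F_k - 1$ and one checks directly that every Type-A-only play from $\{F_1^6\}$ gets stuck at $\{F_1, F_2, F_3\}$ (even though the balance equations admit the solution $b=4$, $s_2=2$, $s_3=1$). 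The same happens for $N = 10 = F_2 + F_5$ and $N = 17 = F_1 + F_3 + F_6$. So the ``consequently'' in your necessity paragraph is a non sequitur as written; the gap-$3$ case must be an explicit (and in some sense the central) part of the impossibility argument. Beyond this, both directions are currently plans rather than proofs: sufficiency rests on the asserted claim that ``a split or combine-$1$ is always available,'' which is exactly what needs to be shown, and the reverse-game impossibility is deferred to an unspecified induction or monovariant. One further technicality if you pursue the reverse-game route: ``every reverse play gets stuck'' is not quite the right target --- you need that no reverse play reaches $\{F_1^N\}$, and you should note why reverse plays are finite (arbitrarily long reverse plays would yield arbitrarily long forward plays ending at the Zeckendorf decomposition, contradicting termination from the finitely many positions summing to $N$).
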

Finally, we remark that analogous two-player games have been developed for other recurrences: the work \cite{GeneralizedZeckendorfGame} extends \cite{baird2018zeckendorf} by defining and studying such games for recursive sequences defined by linear recurrence relations of form $G_n = \sum_{i=1}^k c G_{n-i}$ ($c = k-1=1$ yielding the Fibonacci numbers), again giving lower and upper bounds on game lengths (including showing termination) and showing non-constructively that Player 2 has a winning strategy, while \cite{boldyriew2020extending} similarly studies recurrences of form $a_{n+1} = na_n + a_{n-1}$.

\subsection{Notation and Conventions}
We let $C_1$ denote the combine move $F_1 \wedge F_1 \to F_2$, and (for $k \geq 2$) let $C_k$ denote the combine move $F_{k-1} \wedge F_k \to F_{k+1}$. Let $S_2$ denote the splitting move $2F_2 \to F_1 \wedge F_3$, and (for $k \geq 3$) let $S_k$ denote the splitting move $2F_k \to F_{k-2} \wedge F_{k+1}$. We prefix a particular type of move with $M$ to denote the number of such moves (e.g. $MC_1$ denotes the number of $C_1$'s in a game). 

We let $Z(N)$ denote the number of terms in the Zeckendorf decomposition of $N$. We loosely refer to the number of instances of $F_k$ as the \textbf{height} of bin $k$, denoted $h_k$; it will usually be clear from context at which point in the game the quantity $h_k$ refers to. When discussing the height of bin $k$ after a specific number $m$ of moves in the game, we notate this by $h_k(m)$. For $\lambda \in \mathbb N$, we shall also occasionally use the shorthand $[\lambda] = \{1, 2, \dots, \lambda\}$.

In this work, we shall generally work under the assumption that $F_n \leq N < F_{n+1}$ for some $n \in \mathbb N$ (i.e., $n$ is the index of the largest Fibonacci number that is no larger than $N$)\footnote{This is why we have elected to deviate from notation traditionally used in papers concerning this game.}. While proving Theorem \ref{all_lengths}, we occasionally refer to moves $C_1, S_2, \dots, S_{n-1}$ as \textbf{Type A moves}, and all other moves (namely, moves $C_k$ for $k \geq 2$) as \textbf{Type B moves}. The work \cite{cusenza2020bounds} also achieved an understanding of precisely when playing strictly Type A moves throughout the whole game is possible.

Finally, the present article furthers the study of random Zeckendorf games. Here, we let $\Omega_N$ denotes the (finite) collection of all Zeckendorf games on input $N$, with $\mathcal F_N = 2^{\Omega_N}$ the associated $\sigma$-algebra, and express a given Zeckendorf game $\mathcal G \in \Omega_N$ as a (finite) sequence of $\lambda$ moves, written as $\mathcal G = (M_1, M_2, \dots, M_\lambda)$. We study two probability measures to complete the space $(\Omega_N, \mathcal F_N)$: the uniform measure $\mu_N$, defined by 
\begin{align*}
    \mu_N(\mathcal G) = \frac{1}{|\Omega_N|} \text{ for all } \mathcal G \in \Omega_N
\end{align*}
and the probability measure $\Prob_N$ induced by choosing, at every configuration along a given game, uniformly at random among available moves, defined by
\begin{align*}
    \Prob_N(\mathcal G) = \prod_{k=1}^\lambda \frac{1}{\text{num. playable moves after } (M_1,\dots,M_{k-1})} \text{ for } \mathcal G = (M_1, M_2, \dots, M_\lambda) \in \Omega_N.
\end{align*}
All of the results we derive in this context apply to both probability spaces.

\subsection{Main Results} 

The work \cite{cusenza2020bounds} determines an upper bound on the length of a game on input $N$. We improve this upper bound using similar techniques as in the work \cite{baird2018zeckendorf}.
\begin{restatable}{theorem}{upperBound}
\label{upperboundours}
The length of a Zeckendorf game on input $N$ is upper-bounded by
    \[\left\lfloor\varphi^2N-Z_I(N)-2Z(N)+(\varphi-1)\right\rfloor\]
    where $Z_I(N)$ represents the index sum of the Zeckendorf decomposition of $N$. Furthermore, the bound is sharp for infinitely many $N$.
\end{restatable}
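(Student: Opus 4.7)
The plan is to bound the game length $L$ via two monovariants that isolate the sources of moves, apply a Binet-style identity to convert the resulting expressions into a multiple of $\varphi^2 N$, and then control the residual using the Zeckendorf non-adjacency condition. Note that every Zeckendorf game terminates precisely at the Zeckendorf decomposition of $N$, since this is the unique configuration with no two adjacent occupied bins and every bin-height at most $1$; consequently the terminal heights are exactly the indicator values $z_k$ of the decomposition.

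I would first define $W_1 = \sum_k k \cdot h_k$ and $W_2 = \sum_k F_{k+1} \cdot h_k$. A direct case analysis shows that $W_1$ is unchanged by $C_1$, $C_2$, and $S_2$; decreases by $k-2$ under $C_k$ for $k \geq 3$; and decreases by exactly $1$ under each $S_k$ for $k \geq 3$. Comparing $W_1^{\text{init}} = N$ to $W_1^{\text{final}} = Z_I(N)$ yields $\sum_{k \geq 3} MS_k \leq N - Z_I(N)$. The Fibonacci recurrence makes $W_2$ invariant under every move except $C_1$ (decrease by $1$) and $S_2$ (increase by $1$), so $MS_2 - MC_1 = \sum_k z_k F_{k+1} - 2N$; combining with $MC_1 \leq N - Z(N)$ from Theorem \ref{shortest_game} yields an explicit upper bound on $MS_2$.

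The next step is to turn $\sum_k z_k F_{k+1}$ into something close to $\varphi N$. Binet's formula gives $F_{k+1} = \varphi F_k + \psi^{k+1}$ with $\psi = 1 - \varphi$, and hence $\sum_k z_k F_{k+1} = \varphi N + \sum_k z_k \psi^{k+1}$. The crucial estimate is $\sum_k z_k \psi^{k+1} \leq \varphi - 1$: the Zeckendorf condition forces the shifted index set $\{k+1 : z_k = 1\}$ to contain no two consecutive integers, so the sum is dominated termwise by the geometric series $\sum_{j \geq 1} \psi^{2j} = \psi^2 / (1 - \psi^2) = \varphi - 1$. Assembling $L = (N - Z(N)) + MS_2 + \sum_{k \geq 3} MS_k$ and substituting all three estimates produces
\[
L \;\leq\; \varphi^2 N - Z_I(N) - 2Z(N) + (\varphi - 1),
\]
and since $L$ is a nonnegative integer the floor may be taken.

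For the sharpness claim, I would invoke Theorem \ref{type_a_only}: for every $N = F_k - 1$ with $k \geq 2$, a complete game exists using only Type A moves, forcing $MC_k = 0$ for all $k \geq 2$. In this regime each inequality above tightens to an equality, so the resulting game length equals $\varphi^2 N - Z_I(N) - 2Z(N) + \sum_k z_k \psi^{k+1}$ exactly. The main obstacle is the final slack computation: one must verify that $(\varphi - 1) - \sum_k z_k \psi^{k+1}$ always lies in $[0, 1)$ along the family $N = F_k - 1$ (handling separately the two parities of $k$, which produce even- versus odd-indexed Zeckendorf decompositions), so that flooring the upper bound recovers the attained integer game length and thereby gives infinitely many $N$ achieving the bound.
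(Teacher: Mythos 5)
Your argument is correct and essentially the same as the paper's: your two monovariants are linear combinations of the paper's board relabelings (the paper's weight $k+2$ on bin $k$ is your $W_1$-weight $k$ plus twice the token count, whose total drop is the combine count $N-Z(N)$, and your $W_2$ is exactly Lemma \ref{exceptiondiff}), the discarded slack $\sum_{k\geq 2}(k-1)MC_k$ is identical in both derivations, and your geometric-series bound $\sum_k z_k\psi^{k+1}\leq \varphi-1$ is precisely the paper's Binet error estimate. The sharpness verification you flag as the remaining obstacle is the same routine check the paper makes, restricting to $N=F_{2n}-1$ so that the slack lies in $[0,1)$.
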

Much of our work was inspired by the following conjecture (the only one still unresolved in the paper it was introduced in), initially posed by \cite{baird2018zeckendorf}, which concerns distributional properties of the length of random Zeckendorf games on input $N$ in the limit $N \to \infty$. 

\begin{conjecture}[\cite{baird2018zeckendorf, ,li2020deterministic}] \label{gaussianity}
In the limit $N \to \infty$, the distribution of the number of moves in a random Zeckendorf game on input $N$ converges to a Gaussian, with expectation and variance approximately $0.215N$.\footnote{The authors of \cite{baird2018zeckendorf} posed this conjecture based on numerical data gathered from 9,999 simulations of a random game with $n=18$. The authors of \cite{li2020deterministic} gathered further numerical evidence with a sample of 1,000 games with $n=1,000,000$. We ran a brute force enumeration over all possible games for $n\leq 18$ and found the distribution of lengths appeared to be Gaussian. This is a slightly different problem than the random game though is closely related.}
\end{conjecture}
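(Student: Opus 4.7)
The plan is to exploit the remarkable invariance of Theorem~\ref{shortest_game}: since the number of combine moves on input $N$ is always $N - Z(N)$, the random game length can be written as
\[
L_N \;=\; N - Z(N) \;+\; \sum_{k \geq 1} MS_k,
\]
so the randomness is carried entirely by the total number of split moves. (Here I absorb the $C_1$ move into the ``split-like'' count if convenient, since it is the only combine that can coexist with splits at arbitrary positions in the game.) Proving Gaussianity of $L_N$ therefore reduces to a central limit theorem for $T_N := \sum_k MS_k$, with the constant $0.215$ eventually emerging from a $\varphi$-theoretic evaluation of $\E[T_N]/N$.

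My first step would be to derive asymptotics for $\E[MS_k]$ under $\Prob_N$ and $\mu_N$. An $S_k$ move happens each time bin $k$ reaches height $\geq 2$ and is selected from the list of available moves; the Fibonacci recurrence governs how mass enters bin $k$ from the split $S_{k+2}$ and the combine $C_{k}$ below it, so one gets a linear-in-$N$ recursion for the expected number of visits to height $\geq 2$ at bin $k$. Summing over $k$ should match the conjectured $0.215 N$, which I would cross-check against the numerics in the footnote and with a direct computation for Fibonacci inputs $N = F_n$ using Theorem~\ref{type_a_only}.

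The second step is the CLT itself. My preferred route is a martingale decomposition: order the moves of a random game as $M_1, M_2, \dots, M_{L_N}$, let $\mathcal G_t$ be the $\sigma$-algebra generated by the first $t$ moves, and write
\[
T_N - \E T_N \;=\; \sum_{t=1}^{L_N} \bigl( D_t - \E[D_t \mid \mathcal G_{t-1}] \bigr),
\]
where $D_t$ is the indicator that $M_t$ is a split. This is a bounded-difference martingale, and one applies the martingale CLT once two conditions are verified: (a) the total conditional variance $\sum_t \Var(D_t \mid \mathcal G_{t-1})$ concentrates on a deterministic multiple of $N$, and (b) a Lindeberg condition holds, which is automatic because differences are bounded by $1$ and $L_N = \Theta(N)$ almost surely by Theorems~\ref{shortest_game} and~\ref{upperboundours}.

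The main obstacle, and almost certainly the reason this conjecture has resisted proof, is condition (a). The conditional variance is a functional of the entire bin configuration at time $t$, whose joint law under $\Prob_N$ or $\mu_N$ is hard to describe; showing that its cumulative variance concentrates would essentially require a mixing or propagation-of-chaos statement for the bin-height process (roughly: after many moves, the statistics at bin $k$ depend only weakly on those at distant bins). In the absence of such a mixing result, the pragmatic compromise, which matches what the abstract promises, is to partition $\Omega_N$ into subclasses on which the dynamics become combinatorially tractable (for example, fixing the sequence of split indices or conditioning on a fixed ``skeleton'' of Type~A moves in the sense of Theorem~\ref{type_a_only}), prove Gaussianity on each piece via a local CLT, and position the full conjecture as a mixture-of-Gaussians limit that would follow from sufficient decay of correlation across the partition.
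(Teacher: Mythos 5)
The statement you are addressing is a \emph{conjecture}, not a theorem: the paper does not prove it, and explicitly leaves it open. Your proposal does not prove it either, and to your credit you say so — but it is worth being precise about where the gap sits and how it relates to what the paper actually establishes. Your reduction of the problem to a CLT for the total split count $T_N$ via Theorem~\ref{shortest_game} is sound, and your martingale decomposition is a reasonable framework. The fatal gap is exactly your condition (a): the concentration of the cumulative conditional variance $\sum_t \Var(D_t \mid \mathcal G_{t-1})$ on a deterministic multiple of $N$. This is not a technical loose end to be patched later; it is equivalent in difficulty to the mixing statement the paper isolates in Questions~\ref{mixing_approach} and~\ref{mixing_reduction}, namely quantitative decay of correlations between early and late portions of the game under $\Prob_N$. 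No tool in the paper (or in your proposal) delivers this. There is an additional unaddressed circularity: your martingale sum has a \emph{random} number of terms $L_N$, so invoking a martingale CLT requires an Anscombe-type argument, which in turn needs $L_N/\E[L_N] \to 1$ in probability — itself an unproven concentration statement about the game (Theorems~\ref{shortest_game} and~\ref{upperboundours} only give $L_N = \Theta(N)$ deterministically, not concentration at a single scale $\approx 0.215N$). Your first step likewise only targets the mean; the conjecture's claim that the variance is also $\approx 0.215N$ would require a separate second-moment computation that your sketch does not supply.

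Your closing ``pragmatic compromise'' — partition $\Omega_N$ into combinatorially tractable classes and prove Gaussianity on each piece — is essentially what the paper does achieve in Theorem~\ref{weak_mixtures}, via the prefix/suffix partitions of Propositions~\ref{prefix_partition} and~\ref{suffix_partition} together with a Berry--Esseen bound for the independent expansion lengths within a class. But as the paper notes (Corollary~\ref{cor:likeliest_rep} and the remark following Theorem~\ref{weak_mixtures}), each class has vanishing probability, so passing from per-class Gaussianity to Gaussianity of the mixture requires control on how the class means and variances are distributed across the partition — again an unproven mixing-type input. So: correct identification of the right reduction and of the obstruction, but no proof, and none should be claimed.
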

As such, many of our main results have largely arisen from attempting to understand those aspects of Zeckendorf games which may potentially aid in resolving the aforementioned conjecture (and in striving to determine what such aspects are). First, we have the following lower bound on the number of shortest Zeckendorf games of length $N$. Intuitively, if the distribution of random game lengths were indeed Gaussian, this should be an extreme underestimate compared to the number of ways to achieve other game lengths (shortest games involve the fewest number of decisions, so one might naturally expect that the probability of achieving one via a random game is larger than longer games), yet it still explodes in $N$.

\begin{restatable}{theorem}{catBound}
\label{lower_bound}
Let $F_n \leq N < F_{n+1}$. Then the number of shortest Zeckendorf games with input $N$ is at least $\prod_{k=1}^{n-2} \text{Cat}(F_k)$, where $\text{Cat}(F_k)$ denotes the $F_k\textsuperscript{th}$ Catalan number.
\end{restatable}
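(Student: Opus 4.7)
The plan is to parameterize at least $\prod_{k=1}^{n-2}\text{Cat}(F_k)$ distinct shortest games by tuples of Dyck paths, first reducing to the case $N = F_n$. For general $N$ with Zeckendorf decomposition $N = F_n + \sum_{i \geq 2} F_{m_i}$, I would play a canonical shortest subgame for $\sum_{i \geq 2} F_{m_i}$ first, building each smaller Fibonacci term in a fixed order and using $N - F_n$ of the initial $F_1$'s. Because any subsequent shortest game on the $F_n$-subtree has net effect zero on each bin $k < n$ and the pre-deposited Zeckendorf terms only inflate intermediate bin heights (making consume-steps at least as permissive), appending such a subgame yields a valid shortest game on $N$. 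Distinct $F_n$-subgames produce distinct games on $N$, so it suffices to exhibit $\prod_{k=1}^{n-2}\text{Cat}(F_k)$ distinct shortest games for $N = F_n$.

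For $N = F_n$, Theorem~\ref{shortest_game} forces every shortest game to consist only of combine moves; the recursion $MC_{k-1} = MC_k + MC_{k+1}$ together with the boundary condition $MC_{n-1} = 1$ forces $MC_k = F_{n-k-1}$ (with the convention $F_0 := 1$). Thus each bin $k \in \{2, \ldots, n-1\}$ sees exactly $F_{n-k}$ produce events (the $C_{k-1}$ moves) and $F_{n-k}$ consume events (the $C_k$ and $C_{k+1}$ moves); since bin heights stay non-negative, the event sequence at bin $k$ forms a Dyck path of semilength $F_{n-k}$. Given a tuple $(D_2, D_3, \ldots, D_{n-1})$ of such paths, I would construct a game greedily: at each step, let $k^*$ be the smallest index in $\{2, \ldots, n-1\}$ whose Dyck path has a produce as its next unprocessed event, and play $C_{k^*-1}$; if no such $k^*$ exists, play $C_{n-1}$. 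By the minimality of $k^*$, the next events of $D_{k^*-1}$ and $D_{k^*-2}$ are consumes, matching exactly what $C_{k^*-1}$ advances. Distinct tuples yield distinct games, since the move sequence recovers each bin's event sequence.

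The main obstacle is verifying well-definedness of the greedy reconstruction: at each step, the chosen move must be playable under current bin heights, and the algorithm must terminate with every Dyck path fully consumed after exactly $F_n - 1$ steps. I expect this to follow by induction on step count, using the non-negativity condition of the Dyck paths (a partial path at height zero must continue with a produce) to guarantee that $k^*$ is well-defined until the very last move, and using the fact that each consume event at $D_{k^*-1}$ or $D_{k^*-2}$ corresponds to a bin currently at height at least one to guarantee playability. Reindexing the count of admissible tuples gives $\prod_{k=2}^{n-1}\text{Cat}(F_{n-k}) = \prod_{j=1}^{n-2}\text{Cat}(F_j)$, as desired.
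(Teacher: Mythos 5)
Your proposal is correct and follows essentially the same route as the paper: reduce to $N=F_n$, use the uniqueness of the combine-move multiset $(MC_1,\dots,MC_{n-1})=(F_{n-2},\dots,F_1,F_0)$ to identify each bin $k$'s produce/consume history with a Dyck path of semilength $F_{n-k}$, and then show every tuple of Dyck paths is realized by an actual game. The only material difference is the realization step --- your greedy online scheduler (play $C_{k^*-1}$ for the least $k^*$ whose path next demands a produce, else $C_{n-1}$) versus the paper's top-down interleaved labeling of the $D$-steps --- and the well-definedness check you flag does go through, the one point needing extra care being that the ``no $k^*$'' branch fires exactly once, at the final move, when only $C_{n-1}$ remains unplayed.
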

The shortest game and longest game were studied in \cite{baird2018zeckendorf} and \cite{cusenza2020bounds}, respectively. It is natural to ask whether every game length between the shortest and longest game length is achievable: we resolve this in the affirmative.
\begin{restatable}{theorem}{allLengths}
\label{all_lengths}
For any input $N$ to the Zeckendorf game, let $M$ denote the length of the longest Zeckendorf game with input $N$. Then for any $m$ satisfying $N - Z(N) \leq m \leq M$, there exists a Zeckendorf game of length $m$ on input $N$. In other words, the set of achievable game lengths constitutes an interval in the natural numbers.
\end{restatable}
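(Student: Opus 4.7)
The plan is to reduce the claim to a question about the number of split moves performed. Observe that combine moves decrease the multiset size by one and split moves preserve it; since the game begins with $N$ copies of $F_1$ and ends at the Zeckendorf decomposition (size $Z(N)$), the length of any game is exactly $N - Z(N) + s$, where $s$ is the number of splits performed. So it suffices to construct, for every $s \in \{0, 1, \ldots, s_{\max}\}$ with $s_{\max} = M - (N - Z(N))$, a valid Zeckendorf game on input $N$ containing exactly $s$ splits. The extreme cases $s = 0$ and $s = s_{\max}$ are furnished by Theorems \ref{shortest_game} and \ref{longest_game} respectively.

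The core technical tool I would use is a local rewriting identity. If the current state contains at least two copies of $F_k$ and at least one copy of $F_{k-1}$, then the two-move sequence $S_k$ followed by $C_{k-1}$ (the split uses the two $F_k$'s to produce $F_{k-2}$ and $F_{k+1}$, then $C_{k-1}$ combines the fresh $F_{k-2}$ with the available $F_{k-1}$ to form $F_k$) transforms the state identically to a single $C_k$ move. This follows from a direct multiset calculation using $F_{k-2} + F_{k-1} = F_k$: both sequences send state $s$ to $s - F_{k-1} - F_k + F_{k+1}$. An analogous identity at the boundary says that $S_2$ followed by $C_1$ (with $\geq 2$ copies of $F_2$ and $\geq 1$ $F_1$ present) contracts to $C_2$. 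Each such contraction decreases both the length and the split count by exactly one; reading it the other way, each such expansion increases both by exactly one.

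I would then argue by induction, preferring the downward direction from the longest game $\mathcal G^*$. Given a game $\mathcal G$ with $s \geq 1$ splits, I would locate the latest split $S_k$ and examine what happens to the $F_{k-2}$ it produces: if this $F_{k-2}$ is consumed by a later $C_{k-1}$ move (possibly after other, commuting intermediate moves), then contracting the pair to a single $C_k$ yields a valid game with $s - 1$ splits, completing the inductive step. When no such $C_{k-1}$ follows (so the freshly produced $F_{k-2}$ ends up in the final Zeckendorf rep, forcing $k-2$ to be an index of $Z(N)$), I would switch to examining an earlier split, or rearrange commuting segments of the game to relocate the contractible pair. The edge cases for $k = 2$ (no $F_0$) and for $C_1$ expansions are handled by the boundary version of the identity above.

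The main obstacle I anticipate is guaranteeing that \emph{some} contractible split-combine pair exists in every game with $s \geq 1$ splits. A priori the latest split might produce an $F_{k-2}$ that persists to the end, and the same might hold for every other split; one must show that the structure of the game forces at least one absorbed child Fibonacci, or alternatively exhibit a global rearrangement that produces one. An appealing alternate route is to run the longest-game strategy of Theorem \ref{longest_game} for a variable initial prefix, then switch to a pure-combine (shortest) continuation from the resulting state — as the switching point is advanced past each split in $\mathcal G^*$, the total length should increase by exactly $1$, hitting every intermediate value. The catch is that an intermediate state need not admit a pure-combine continuation (for instance a state like $\{F_k, F_k\}$ alone), so one must argue either that such obstructions can be routed around by a single additional split, or that the longest-game trajectory always passes through switching states from which combines-only finishes exist.
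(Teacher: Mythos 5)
Your reduction to ``realize every split count $s\in\{0,\dots,s_{\max}\}$'' is correct, and the contraction identity $S_k \to C_{k-1} \equiv C_k$ is exactly the $(1,k)$-prefix relation the paper itself isolates (Definition \ref{prefix_suffix}). But the step you flag as the main obstacle is a genuine, fatal gap, not a technicality: it is simply false that every game with $s\geq 1$ splits contains a contractible split--combine pair, even after rearranging commuting moves. The smallest counterexample is $N=4$: the unique longest game is $C_1, C_1, S_2$ (length $3$), and the lone split $S_2$ produces an $F_1$ and an $F_3$ that are precisely the terminal Zeckendorf decomposition $4 = F_1+F_3$, so neither output is ever consumed and no contraction exists. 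The length-$2$ game $C_1, C_2$ is reached only by a globally different choice made \emph{before} the split, not by a local rewrite of the longest game. Your fallback ``switching point'' argument has the same defect for the same example: the longest-game trajectory passes through the state $\{F_2,F_2\}$, from which the only legal move is a split, so no pure-combine finish exists there and the claimed ``length increases by exactly $1$ as the switch point passes each split'' breaks down. Since the difficulty already appears at $N=4$, neither variant of your plan closes without a substantially new idea.

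For comparison, the paper avoids local surgery entirely and argues by strong induction on $N$. It builds two intervals of achievable lengths: $I_1$, obtained by playing any game on input $N-1$ and then appending the forced combine sequence of Lemma \ref{add_1} (its left endpoint is the shortest length $N-Z(N)$), and $I_2$, obtained by playing any game on input $F_n-1$ and then a longest continuation (its right endpoint is $M$). The whole technical content is showing these intervals overlap, i.e.\ $L_2\leq R_1$, which is done by an explicit move-count comparison of two specific games, bounding their post-divergence length difference by $n$ and showing the longest-minus-shortest gap $\ell(n)$ on input $F_n-1$ is at least $n$. If you want to salvage your approach, you would need a global rearrangement lemma far stronger than commuting adjacent moves; the induction on $N$ is what lets the paper sidestep exactly the obstruction your $N=4$-type configurations present.
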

We also study the winning odds of players in the limit $N \to \infty$ of infinite input when studying random Zeckendorf games, for which one might expect that both players win with probability $1/2$ in the limit if if Conjecture \ref{gaussianity} holds as the variance of the conjectured Gaussian grows with $N$. We establish that this is indeed true by proving a much more general result: we can understand Theorem \ref{many_players} as saying that in the limit of infinite input, a $Z$-player random Zeckendorf game is fair, in the sense that all $Z$ players have the same probability of winning.
\begin{restatable}{theorem}{manyPlayers}
\label{many_players}
For any integer $Z \geq 1$ and $z \in \{0,1,\dots,M-1\}$, we have that
\begin{align*}
    \lim_{N \to \infty} \mu_N(\text{Game length equals } z \text{ mod } Z) = \lim_{N \to \infty} \Prob_N(\text{Game length equals } z \text{ mod } Z) = \frac{1}{Z}.
\end{align*}
\end{restatable}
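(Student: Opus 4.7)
The first observation is that by Theorem \ref{shortest_game}, every Zeckendorf game on input $N$ has exactly $N - Z(N)$ combine moves, so the length of the game equals $(N - Z(N)) + MS$, where $MS$ is the total number of split moves across the game. The theorem thus reduces to showing that $MS \bmod Z$ is asymptotically uniform on $\{0, 1, \ldots, Z-1\}$ under both $\mu_N$ and $\Prob_N$.

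To supply this uniformity, the plan is to exhibit a local swap gadget. I claim that at any position whose bin heights satisfy $h_1 \geq 1$ and $h_2 \geq 2$, both the three-move sequence Gadget $A = (S_2, C_1, C_3)$ and the two-move sequence Gadget $B = (C_2, C_3)$ are legal, and both transform $(h_1, h_2, h_3, h_4, \ldots)$ into $(h_1-1, h_2-2, h_3, h_4+1, \ldots)$; this is a direct bin-by-bin verification using the admissibility conditions on each intermediate step. Substituting one gadget for the other at any such occurrence within a game thus produces another valid Zeckendorf game whose length, and correspondingly whose value of $MS$, differs by exactly $1$, with every other move unchanged.

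I would then partition $\Omega_N$ into orbits by greedily marking a maximal collection of disjoint Gadget A / Gadget B occurrences along each game and toggling any subset of them. A game with $k$ marked sites sits in an orbit of size $2^k$, and $MS$ varies across the orbit by the number of sites set to $A$. Under $\mu_N$ all orbit members carry equal weight, so the conditional distribution of $MS$ on the orbit is $\mathrm{Bin}(k,1/2)$ plus a constant; under $\Prob_N$ the gadget swap alters the number of playable moves at its intermediate positions by only an $O(1)$ amount, so the conditional distribution is $\sum_i \mathrm{Bern}(p_i)$ for some site-dependent probabilities $p_i$ bounded away from $0$ and $1$. In either case the characteristic function bound
\[
\left| \mathbb{E}[\omega^{MS}] \right| \;=\; \prod_{i=1}^k \left| (1-p_i) + p_i\, \omega \right| \;\longrightarrow\; 0 \qquad (k \to \infty)
\]
for any nontrivial $Z$-th root of unity $\omega$ shows the conditional distribution is asymptotically uniform mod $Z$, and averaging over orbits delivers the theorem.

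The principal obstacle is showing that, for any fixed $k$, the fraction of games admitting fewer than $k$ disjoint swap sites tends to $0$ as $N \to \infty$, under both measures. Heuristically this should hold because every game plays $\Theta(N)$ combine moves---in particular many $C_1$'s---continually feeding the $F_2$ bin, so configurations with $h_1 \geq 1$ and $h_2 \geq 2$ recur repeatedly throughout the game. Making this rigorous, especially under $\Prob_N$ where atypical branches of the random game tree can still carry nonnegligible weight, will require quantitative structural estimates, likely leveraging the greedy longest-game strategy of Theorem \ref{longest_game} and the characterization of type-A-only inputs in Theorem \ref{type_a_only} to rule out the exceptional games that consistently bypass the $h_1 \geq 1, h_2 \geq 2$ regime.
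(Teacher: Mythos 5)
Your architecture is the same as the paper's: your Gadget A / Gadget B swap is exactly the interchange $C_2 \leftrightarrow (S_2, C_1)$ of Definition \ref{prefix_suffix} (the trailing $C_3$ you append to both gadgets is superfluous), your orbits are the paper's classes $\mathcal A_N(\mathcal R)$ (Lemma \ref{partition}), your conditional-independence claim is Lemma \ref{independence}, and your roots-of-unity bound plays the role of Lemma \ref{binom}. The verification that the gadgets are legal and board-equivalent is correct. However, two things stand between this sketch and a proof, one of which you flag and one of which you get wrong.

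First, the step you defer --- that games with few swap sites are asymptotically negligible --- is not a technicality but the main quantitative content of the argument. The paper's Lemma \ref{many_decisions_exp} handles it by partitioning the first $3\lfloor N/6\rfloor$ moves into disjoint triples, observing that each triple equals $(C_1,C_1,C_2)$ with probability at least $\tfrac{1}{8n^3}$ whenever $h_1\geq 5$ (which costs at most $6$ tokens of bin $1$ per triple, so holds throughout this range), stochastically dominating the count by a $\mathrm{Bin}(\lfloor N/6\rfloor, \tfrac{1}{8n^3})$ variable, and applying a Chernoff bound. Your proposal contains no mechanism for this, and without it nothing is proved.

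Second, and more seriously, your quantitative target is too weak for the measure $\Prob_N$. The swap probabilities are not bounded away from $0$: by the paper's Lemma \ref{independence} and Corollary \ref{num_bins}, $p_i = \frac{1}{1+n_i}$ where $n_i$ is the number of playable moves at the intermediate position, which can be as large as $2n-3$, so $p_i$ can be as small as $\frac{1}{2n-2} \to 0$. Consequently your characteristic-function product $\prod_{i=1}^{k}\left|(1-p_i)+p_i\omega\right|$ tends to $0$ only if $\sum_i p_i(1-p_i)\to\infty$, and a \emph{fixed} number $k$ of swap sites does not guarantee this --- with $k$ fixed and $p_i \asymp 1/n$ the conditional distribution of $MS$ concentrates on a single residue. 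You need the number of sites to grow superlinearly in $n$; the paper demands $m_N(\mathcal R)\geq 2n^3$, pigeonholes to extract $n^2$ sites sharing a common parameter $p\geq\frac{1}{2n-2}$, and thereby gets a binomial of variance $\gtrsim n/4 \to\infty$. (Under $\mu_N$ alone, where every $p_i=\tfrac12$, your ``fixed $k$, $k$ large'' formulation would suffice.) A smaller issue: toggling a ``greedily marked maximal disjoint collection'' of gadget occurrences does not obviously yield a well-defined partition of $\Omega_N$, since the toggled game may admit a different greedy marking; the paper's Lemma \ref{partition} is devoted precisely to choosing canonical representatives for which this closure property holds.
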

\noindent Taking $Z = 2$ in Theorem \ref{many_players} above yields the following result for the classical two-player Zeckendorf game.
\begin{restatable}{theorem}{halfHalf}
\label{half_half}
For the two-player Zeckendorf game, in the limit $N \to \infty$ under both probability measures $\mu_N$ and $\Prob_N$, Player 1 and Player 2 are equally likely to win. Explicitly,
\begin{align*}
    & \lim_{N \to \infty} \mu_N(\text{Player 1 wins}) = \lim_{N \to \infty} \mu_N(\text{Player 2 wins}) = \frac{1}{2}, \\
    & \lim_{N \to \infty} \Prob_N(\text{Player 1 wins}) = \lim_{N \to \infty} \Prob_N(\text{Player 2 wins}) = \frac{1}{2}.
\end{align*}
\end{restatable}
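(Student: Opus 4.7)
The plan is to derive Theorem \ref{half_half} as an immediate specialization of Theorem \ref{many_players} to the case $Z=2$, so essentially no new work is required beyond an elementary parity observation about the two-player game.

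First, I would observe the combinatorial link between winners and game length: in the two-player Zeckendorf game, Player 1 moves first and the players alternate turns, with the last player to move winning. Hence if the game lasts $\lambda$ moves total, then Player 1 wins precisely when $\lambda$ is odd, and Player 2 wins precisely when $\lambda$ is even. Formally, for each $\mathcal{G} \in \Omega_N$ of length $\lambda(\mathcal{G})$,
\begin{align*}
    \{\text{Player 1 wins}\} \;=\; \{\mathcal{G} : \lambda(\mathcal{G}) \equiv 1 \pmod{2}\}, \qquad \{\text{Player 2 wins}\} \;=\; \{\mathcal{G} : \lambda(\mathcal{G}) \equiv 0 \pmod{2}\}.
\end{align*}

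Second, I would invoke Theorem \ref{many_players} with $Z=2$. Taking $z=1$ gives $\lim_{N \to \infty} \mu_N(\lambda \equiv 1 \bmod 2) = 1/2$ and $\lim_{N \to \infty} \Prob_N(\lambda \equiv 1 \bmod 2) = 1/2$, while taking $z=0$ gives the analogous statements with $\lambda \equiv 0 \bmod 2$. Combining these with the parity identification of the two events above immediately yields all four limits claimed in Theorem \ref{half_half}.

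There is no substantive obstacle beyond Theorem \ref{many_players} itself: the entire content of Theorem \ref{half_half} is subsumed by the $Z=2$ instance of that more general equidistribution statement, with the only additional ingredient being the trivial remark that the winner of the alternating two-player game is determined by the parity of the total number of moves. The real work of the argument lies upstream in establishing Theorem \ref{many_players} (for all $Z$), not in this corollary.
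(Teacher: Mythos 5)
Your proposal is correct and matches the paper exactly: the paper also obtains Theorem \ref{half_half} as the $Z=2$ case of Theorem \ref{many_players}, with the winner determined by the parity of the game length. No further commentary is needed.
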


Finally, we establish that there exist natural ways to partition the collection of Zeckendorf games $\Omega_N$ on input $N$ so that the distribution of game lengths over the corresponding classes are nearly Gaussian with high probability in the limit $N \to \infty$. The construction of the subsets $\mathcal R_N^{\mathcal P} \subset \Omega_N$ and $\mathcal R_N^{\mathcal S} \subset \Omega_N$, and the sets $\mathcal A_N(\mathcal R)$, is elaborated in Propositions \ref{prefix_partition} and \ref{suffix_partition}.

\begin{restatable}{theorem}{weakMixtures}
\label{weak_mixtures}
For $\mathcal R \in \mathcal R_N^{\mathcal P}$, let $F_N^{\mathcal R}(x): \R \to [0,1]$ denote the distribution function corresponding to game lengths in $\mathcal A_N(\mathcal R)$ over the conditional distribution induced by $\Prob_N$, normalized to have expectation $0$ and variance $1$. Let $\Phi: \R \to [0,1]$ denote the distribution function of the standard normal. Then for any $\epsilon > 0$,
\begin{align*}
    \lim_{N \to \infty} \Prob_N\left( \sup_{x \in \R} \left| F_N^{\mathcal R}(x) - \Phi(x) \right| \geq \epsilon \right) = 0.
\end{align*}
Similarly, for $\mathcal R \in \mathcal R_N^{\mathcal S}$, let $F_N^{\mathcal R}(x): \R \to [0,1]$ denote the distribution function corresponding to game lengths in $\mathcal A_N(\mathcal R)$ over the conditional distribution induced by $\Prob_N$, normalized to have expectation $0$ and variance $1$. Then for any $\epsilon > 0$,
\begin{align*}
    \lim_{N \to \infty} \Prob_N\left( \sup_{x \in \R} \left| F_N^{\mathcal R}(x) - \Phi(x) \right| \geq \epsilon \right) = 0.
\end{align*}
The analogous results hold for the uniform measure $\mu_N$.
\end{restatable}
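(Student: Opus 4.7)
The plan is to design the partitions $\mathcal R_N^{\mathcal P}$ and $\mathcal R_N^{\mathcal S}$ so that once we condition on the prefix (or suffix) specifying $\mathcal R$, the remaining portion of the game decomposes into many nearly-independent subgames, each contributing a bounded amount to the total length. Given such a decomposition, the conclusion will follow from the Lindeberg--Feller central limit theorem, combined with a concentration argument that ensures the decomposition has enough summands of balanced variance for \emph{typical} $\mathcal R$.

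First, I would formalize the prefix partition. A class $\mathcal R \in \mathcal R_N^{\mathcal P}$ specifies the initial block of moves played up to the first point where the configuration of bins \emph{factorizes}: the multiset splits into disjoint packets of consecutive Fibonacci indices such that no legal continuation can act across packets (for instance, once a wide run of empty bins appears that no combine or split move can bridge). After conditioning on such a prefix, the total game length can be written as $L_{\mathcal R} = |\mathcal R| + \sum_{i=1}^{r(\mathcal R)} L_i^{\mathcal R}$, where $|\mathcal R|$ is the deterministic prefix length and the $L_i^{\mathcal R}$ are the lengths of the independent subgames played on each packet. Under $\Prob_N(\cdot \mid \mathcal R)$ the $L_i^{\mathcal R}$ are genuinely independent, since the legal-move structure respects the packet partition and a uniform random choice among the union of per-packet legal moves induces the product measure on the joint subgame trajectories (by a standard thinning argument). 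The suffix case is analogous, anchoring the factorization at the end of the game.

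Second, I would verify Lindeberg--Feller for the triangular array $\{L_i^{\mathcal R} - \E[L_i^{\mathcal R}]\}_{i=1}^{r(\mathcal R)}$. Each $L_i^{\mathcal R}$ is sandwiched between the shortest and longest game lengths on its packet input by Theorems \ref{shortest_game} and \ref{upperboundours}, supplying deterministic polynomial bounds. For the variance lower bound and the Lindeberg moment condition, I would invoke Theorem \ref{all_lengths}: each packet game supports an entire interval of achievable lengths, producing nontrivial spread for any $L_i^{\mathcal R}$ whose packet input exceeds a small constant. Combined with the high-probability guarantee that $r(\mathcal R) \to \infty$ and that the maximum summand variance is a vanishing fraction of the total variance, Lindeberg--Feller yields pointwise convergence of the normalized $F_N^{\mathcal R}$ to $\Phi$; continuity of $\Phi$ on $\R$ then upgrades this to uniform (Kolmogorov) convergence.

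Third, the high-probability statement over $\Prob_N$ reduces to a structural claim: with probability tending to $1$ under $\Prob_N$, a random Zeckendorf game passes through a factorization point with $r(\mathcal R) \to \infty$ and no single packet dominates in input size. The argument for the uniform measure $\mu_N$ runs in parallel, replacing the conditional product measure on continuations by the uniform measure on admissible continuations (the factorization ensures both are product measures across packets). The main obstacle is this concentration step: proving both that the factorization occurs with many balanced packets and, simultaneously, that no single subgame variance swamps the total. This will likely require either an explicit coupling of the random game to a simpler Markov chain on bin heights, or moment-comparison bounds derived from Theorems \ref{longest_game} and \ref{upperboundours} applied uniformly over the packets.
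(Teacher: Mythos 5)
Your proposal does not prove the stated theorem, because it quietly replaces the objects the theorem is about. The sets $\mathcal A_N(\mathcal R)$ in Theorem \ref{weak_mixtures} are the specific classes constructed in Propositions \ref{prefix_partition} and \ref{suffix_partition}: a representative $\mathcal R \in \mathcal R_N^{\mathcal P}$ is a game in which no combine move is preceded by the matching split (so no $C_k$ is part of a $(1,k)$-prefix), each combine move of $\mathcal R$ becomes a delimiter $\mathcal E_k^\ell$, and $\mathcal A_N(\mathcal R)$ consists of all games obtained by independently expanding each delimiter into an $(l,k)$-prefix with $0 \leq l \leq \ell$. Your ``factorization into disjoint packets of bins'' partition is a different construction altogether, so even a complete argument along your lines would establish a different statement. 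The paper's proof exploits exactly the structure you discard: conditioned on $\mathcal A_N(\mathcal R)$, the expansion lengths $X_i^{\mathcal R}$ of the combine moves are independent (Proposition \ref{num_bins_generalized}), each is deterministically bounded by $O(n)$, the game length is $\sum_i X_i^{\mathcal R}$ plus a constant, and with probability tending to $1$ the class is large enough (Proposition \ref{many_decisions_generalized}) that the total variance explodes; Berry--Esseen for independent, non-identically distributed summands then gives the Kolmogorov--Smirnov bound directly, with no separate pointwise-to-uniform upgrade needed.

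Even taken on its own terms, your program has two unsupported steps that I do not believe can be repaired cheaply. First, the existence, with $\Prob_N$-probability tending to $1$, of a factorization point with $r(\mathcal R) \to \infty$ balanced packets is doubtful: Proposition \ref{prop:small_combines} shows that asymptotically all combine moves have small index, so the activity of a random game concentrates in the low bins and tokens continually migrate rightward across any putative gap; Lemma \ref{no_back} gives isolation of a prefix of bins only under the longest-game paradigm, not for random play. Second, the ``standard thinning argument'' for independence of packet trajectories under $\Prob_N$ is not standard here: the probability of an interleaved continuation is $\prod_t (\ell_1(t)+\cdots+\ell_r(t))^{-1}$, where $\ell_j(t)$ is the number of legal moves in packet $j$ at step $t$, and since these counts vary with how far each packet has progressed, summing over interleavings does not factor into a product of the per-packet game measures. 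Finally, invoking Theorem \ref{all_lengths} for the Lindeberg variance condition is a non sequitur: an interval of \emph{achievable} lengths says nothing about the spread of the conditional \emph{distribution} of a packet's length, which could concentrate on a single value.
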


\section{Structural Results}

In this section, we include some straightforward, but fundamental results concerning the nature of the Zeckendorf game; some of these will be invoked in proofs of deeper theorems.

\subsection{Combinatorial Observations} We begin by exploring some basic properties of the Zeckendorf game, observable by studying deterministic subroutines of moves. The following simple result mirrors techniques in \cite{baird2018zeckendorf}

\begin{proposition} \label{all_states_reachable}
Consider any decomposition of $N$ into a sum of (possibly non-distinct, non-consecutive) Fibonacci numbers: this decomposition can be achieved via a sequence of combine moves from the starting configuration of the Zeckendorf game.
\end{proposition}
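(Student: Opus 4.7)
The plan is to reduce the proposition to a single ``build'' subroutine, encapsulated in the following lemma: starting from exactly $F_k$ copies of $F_1$ (and no other Fibonacci numbers), one can, using only combine moves, reach a state consisting of a single $F_k$. Once this is established, the proposition follows by partitioning the $N$ initial copies of $F_1$ into disjoint groups indexed by the summands in the target decomposition, and running the subroutine on each group.

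For the subroutine itself, I would proceed by strong induction on $k$. The base cases $k = 1$ (do nothing) and $k = 2$ (apply $C_1$ once) are immediate. For the inductive step $k \geq 3$, I would invoke the Fibonacci recurrence $F_k = F_{k-1} + F_{k-2}$: partition the $F_k$ copies of $F_1$ into a sub-group of size $F_{k-1}$ and a sub-group of size $F_{k-2}$, apply the inductive hypothesis to the first sub-group (leaving the second untouched) to produce one $F_{k-1}$, then apply the inductive hypothesis to the second sub-group to produce one $F_{k-2}$, and finally apply $C_{k-1}$ to combine them into a single $F_k$.

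For the proposition itself, write the target decomposition as $N = \sum_k c_k F_k$ where $c_k \geq 0$ counts the multiplicity of $F_k$ in the decomposition. Because $\sum_k c_k F_k = N$, the $N$ starting copies of $F_1$ can be partitioned into $\sum_k c_k$ groups, with $c_k$ of them having size $F_k$ for each $k$. Applying the subroutine to each group in turn produces exactly $c_k$ copies of $F_k$ for each $k$, matching the target decomposition. All moves used are combine moves, as required.

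The main obstacle I anticipate is the bookkeeping needed to show that the subroutine applied to one group does not inadvertently use or produce Fibonacci numbers belonging to another group. Since combine moves are chosen rather than forced, this is conceptually harmless, but should be justified carefully: one can maintain a ``coloring'' of the $F_1$'s by group, argue by induction that every Fibonacci number produced while running the subroutine on a given group inherits that group's color, and observe that no combine move in the subroutine requires a Fibonacci number of any other color. Running the subroutines sequentially (one group at a time) then guarantees non-interference, and the proposition follows.
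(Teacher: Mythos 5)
Your proof is correct and is essentially the forward-running version of the paper's argument: the paper starts from the target configuration and repeatedly replaces the farthest-out piece $F_k$ by $F_{k-2} \wedge F_{k-1}$ (the reverse of a combine move) until only $F_1$'s remain, then reverses the whole sequence --- which is precisely your recursive build subroutine, with the groups interleaved rather than processed one at a time. Your explicit induction on the recurrence and the coloring bookkeeping fill in details the paper leaves implicit, but the underlying idea is the same.
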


\begin{proof}
We ``play the game in reverse": consider the configuration corresponding to this decomposition, and construct a sequence of moves by always taking the game piece not in the first bin (i.e., $F_1$) and farthest out, and replace it as the result of a combine move. Specifically, if $k \geq 3$, then replace $F_k$ by $\{F_{k-2}, F_{k-1}\}$; if $k = 2$, replace with $2F_1$. Then reverse all the moves to get a Zeckendorf game from the initial state to this state.
\end{proof}

The following is an easy consequence of Theorem \ref{longest_game} above, which states that the longest game paradigm from the starting position extends to intermediate game positions on input $N$ which are given by converting $F_n-1$ instances of $F_1=1$ into the Zeckendorf decomposition of $F_n-1$, where $n$ denotes the index of the largest Fibonacci number in the Zeckendorf decomposition of the input $N$.
\begin{lemma} \label{longest_game_2}
Let $n$ denote the index of the largest Fibonacci number in the Zeckendorf decomposition of the input $N$. A longest Zeckendorf game from an intermediate configuration given by converting $F_n-1$ instances of $1$ into the Zeckendorf decomposition of $F_n-1$ is given by greedily playing any Type A move whenever possible, and if no such Type A move can be played, play the available Type B move with the smallest index.
\end{lemma}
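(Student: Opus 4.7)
The plan is to reduce this lemma to Theorem \ref{longest_game} by realizing the specified intermediate configuration as a waypoint on an optimal trajectory starting from $\{F_1^N\}$.

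First, I would establish reachability: the intermediate configuration can be reached from $\{F_1^N\}$ using only Type A moves. By Theorem \ref{type_a_only}, a Zeckendorf game on input $F_n - 1$ can be played entirely with Type A moves, and by definition it terminates at the Zeckendorf decomposition of $F_n - 1$. Isolating $F_n - 1$ of the initial $N$ ones and executing such a sub-game on this subcollection (leaving the other $N - F_n + 1$ ones untouched) yields a sequence of Type A moves in the full game on input $N$ that transforms $\{F_1^N\}$ into precisely the intermediate configuration described in the statement. Let $p$ denote the length of this prefix.

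Next, I would verify that appending the proposed strategy (any Type A when available, else smallest-index Type B) to this prefix produces a valid execution of the greedy paradigm of Theorem \ref{longest_game} on input $N$: at each step of the prefix a Type A move is played while at least one Type A move is available, which Theorem \ref{longest_game} explicitly permits (it allows Type A moves to be played in any order). Let $q$ denote the length of the proposed continuation from the intermediate configuration. By Theorem \ref{longest_game}, the total length $p + q$ equals the longest possible game length on input $N$.

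Finally, I would deduce optimality from the intermediate configuration. Given any Zeckendorf game $\mathcal G'$ starting from this configuration, prepending the Type A prefix yields a valid Zeckendorf game on input $N$ of length $p + |\mathcal G'|$. By Theorem \ref{longest_game} this length is at most $p + q$, so $|\mathcal G'| \leq q$. Since the proposed greedy strategy from the intermediate configuration achieves length exactly $q$, it is a longest game from that configuration, as claimed. The one point requiring care is confirming that the prefix play genuinely qualifies as an execution of the greedy paradigm on input $N$: namely, that Type A moves remain available throughout the prefix (which follows from Theorem \ref{type_a_only} applied to the sub-game) and that Theorem \ref{longest_game} does not force a particular choice among simultaneously available Type A moves (explicit in the ``in any order'' clause). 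Apart from this verification, the argument is a clean reduction.
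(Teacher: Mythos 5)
Your proposal is correct and follows essentially the same route as the paper: both reduce the claim to Theorem \ref{longest_game} by using Theorem \ref{type_a_only} to realize the intermediate configuration as the endpoint of an all-Type-A prefix, so that the prefix plus the greedy continuation is a valid execution of the longest-game paradigm on input $N$, and any longer continuation would contradict its optimality. The paper phrases this as a one-line contradiction, while you spell out the same reduction directly; no substantive difference.
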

\begin{proof}
If a game achieved by playing Type A moves whenever possible from this configuration were not maximal (i.e., there existed a Zeckendorf game of strictly larger length), then by initially playing the longest game on input $F_n-1$ via all Type A moves (possible by Theorem \ref{type_a_only}), we can play the game exactly according to Theorem \ref{longest_game} but fail to achieve a game of maximal length, contradicting Theorem \ref{longest_game}.
\end{proof}

Using similar techniques as in \cite{baird2018zeckendorf}, we derive the following results in order to improve the upper bound found in \cite{cusenza2020bounds}.
\begin{lemma}\label{movesum}
Let $n$ be the largest summand in the Zeckendorf decomposition of $N$, we get that for any $2\leq k \leq n-1$, the following sum is constant:
		\[MS_k + MC_k + MC_{k+1} + \cdots + MC_{n-1}.\]
\end{lemma}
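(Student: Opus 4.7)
The plan is to exhibit a linear potential $Q(h) = \sum_{j=1}^{n} a_j h_j$ on configurations such that playing any move in $\{S_k, C_k, C_{k+1}, \ldots, C_{n-1}\}$ increases $Q$ by exactly $1$ and every other move leaves $Q$ unchanged. Once such weights are in hand, summing the per-move changes across any game on input $N$ telescopes to give
\[
MS_k + MC_k + MC_{k+1} + \cdots + MC_{n-1} \;=\; Q(\text{final}) - Q(\text{initial}).
\]
Since the initial configuration $\{F_1^N\}$ and the terminal configuration (the Zeckendorf decomposition of $N$, where every game terminates) depend only on $N$, the right-hand side depends only on $N$ and $k$, proving the lemma.

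To construct the weights I would write out $\Delta Q$ for each move type and solve the resulting linear system. Normalizing $a_1 = 1$ and imposing the desired behaviour under combines gives $a_2 = 2$ (from $C_1$) and the recurrence $a_{j+1} = a_j + a_{j-1} + c_j$ for $2 \le j \le n-1$, where $c_j = \mathbf{1}\{k \le j \le n-1\}$. This recurrence has the unique solution $a_j = F_j$ for $j \le k$ and $a_j = F_j + F_{j-k+1} - 1$ for $k < j \le n$. A short manipulation using the combine recurrence then shows that the split move $S_j$ changes $Q$ by exactly $c_j - c_{j-1}$ (with $c_1 := 0$), a quantity which is $1$ at $j = k$, $0$ for every $k < j \le n-1$, and $-1$ at $j = n$.

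The discrepancy at $j = n$ is harmless: the hypothesis $F_n \le N < F_{n+1}$ forces $2F_n > N$, so no configuration summing to $N$ can contain two copies of $F_n$ and hence the move $S_n$ is never playable. Thus only split moves with $j \le n-1$ need to respect the potential constraint, and for these the formula $c_j - c_{j-1}$ produces exactly the desired indicator. The main obstacle is guessing the right potential; once the weights $a_j$ are identified, the proof reduces to routine case analysis on the four move families $C_1$, $C_j$, $S_2$, and $S_j$ for $j \ge 3$, broken into the regimes $j < k$, $j = k$, and $j > k$. A subtle but essential point is that without the observation that $S_n$ (and $C_n$) cannot occur, the linear system defining $\{a_j\}$ would be overdetermined at the top index.
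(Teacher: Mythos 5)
Your proposal is correct and is essentially the paper's own argument: the paper proves the lemma by relabeling the bins with weights equal to $F_j$ for $j\leq k$ and a modified Fibonacci recurrence beyond bin $k$ (there, $a_{j+1}=a_j+a_{j-1}-1$, so each relevant move decreases the weighted sum by $1$ rather than increasing it), and then telescopes between the fixed initial and terminal configurations exactly as you do. Your sign convention and the explicit handling of the never-playable moves $S_n$, $C_n$ are immaterial differences.
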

\begin{proof}
    Consider the following relabeling of the board:
	\begin{center} \small
		\begin{tabular}{ |c|c|c|c|c|c|c|c| } 
			\hline
			$F_1$ & $\cdots$ & $F_k$ & $F_{k+1}-1$ & $F_{k+2}-2$ & $F_{k+3}-4$ & $F_{k+4}-7$ & $\cdots$\\ 
			\hline
		\end{tabular}
	\end{center}
	where after the $k\textsuperscript{th}$ bin, the value of a bin is equal to one less than the sum of the values of the two bins which precede it. We get that only the moves $S_k, C_k, C_{k+1}, \dots$ can change the weighted sum of the tokens by the relabeled values, and each of these moves reduce the sum by 1. Since we have a fixed initial sum $N$ and a fixed ending sum depending on the Zeckendorf Decomposition of $N$, we get that the sum of those moves must be constant regardless of how the game is played.
\end{proof}
\begin{lemma}\label{exceptiondiff}
    For any Zeckendorf game starting with $N$ tokens, \[MC_1 - MS_2 \approx (2-\varphi)N\] with approximation error $\leq \varphi-1$.
\end{lemma}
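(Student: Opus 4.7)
The plan is to construct a weighted invariant that isolates the contribution of $C_1$ and $S_2$, mirroring the relabeling technique used in Lemma \ref{movesum}. I would assign weight $w_k = \varphi^{k-1}$ to each token in bin $k$, where $\varphi = (1+\sqrt{5})/2$. Because $\varphi^2 = \varphi + 1$, the weights satisfy the Fibonacci recurrence $w_{k+1} = w_k + w_{k-1}$, so every combine $C_k$ with $k \geq 2$ (net change $w_{k+1} - w_k - w_{k-1} = 0$) and every split $S_k$ with $k \geq 3$ (net change $w_{k-2} + w_{k+1} - 2w_k = 0$, applying the recurrence twice) preserves the total weighted sum $W$. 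The only remaining moves, $C_1$ and $S_2$, change $W$ by $w_2 - 2w_1 = \varphi - 2$ and $w_1 + w_3 - 2w_2 = 2 - \varphi$, respectively: equal in magnitude and opposite in sign.

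Next, equating the net change from the initial configuration (weight $N$) to the terminal configuration (the Zeckendorf decomposition $N = \sum_j F_{i_j}$, with weight $\sum_j \varphi^{i_j - 1}$) yields
\[MC_1 - MS_2 \;=\; \frac{N - \sum_j \varphi^{i_j - 1}}{2 - \varphi}.\]
I would then apply Binet's formula, taking care that the paper's indexing $F_1 = 1, F_2 = 2$ gives $F_k = (\varphi^{k+1} - \psi^{k+1})/\sqrt{5}$, where $\psi = -1/\varphi$. Rewriting $\varphi^{i_j - 1} = \varphi^{-2}(\sqrt{5}\, F_{i_j} + \psi^{i_j+1})$ and summing yields $\sum_j \varphi^{i_j - 1} = (\sqrt{5}/\varphi^2)\, N + E$, where $E := \varphi^{-2} \sum_j \psi^{i_j + 1}$. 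A short algebraic check gives $\sqrt{5}/\varphi^2 = 3\varphi - 4 = 1 - (2-\varphi)^2$, so after dividing by $2 - \varphi$ the main term contributes exactly $(2-\varphi)N$ to $MC_1 - MS_2$.

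The remaining step, and the main (though still modest) obstacle, is bounding $|E|$. Since the Zeckendorf indices $\{i_j\}$ are non-consecutive positive integers, the sum $\sum_j |\psi|^{i_j + 1}$ is maximized over all admissible index sets by the densest admissible choice $\{1,3,5,\ldots\}$, giving
\[\sum_j |\psi|^{i_j + 1} \;\leq\; \sum_{m \geq 1} |\psi|^{2m} \;=\; \frac{|\psi|^2}{1 - |\psi|^2} \;=\; \frac{1}{\varphi}\]
via the identity $\varphi^2 - 1 = \varphi$. Hence $|E| \leq \varphi^{-3}$, and dividing by $2 - \varphi$ gives
\[\left|MC_1 - MS_2 - (2-\varphi)N\right| \;\leq\; \frac{1}{\varphi^3(2-\varphi)} \;=\; \varphi - 1,\]
where the closed form $\varphi^3(2-\varphi) = \varphi$ follows from $\varphi^2(2-\varphi) = 2(\varphi+1) - \varphi(\varphi+1) = 1$.
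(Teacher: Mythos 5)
Your proof is correct and follows essentially the same strategy as the paper's: a weighted relabeling of the bins whose weights satisfy the Fibonacci recurrence (so that only $C_1$ and $S_2$ perturb the invariant), followed by Binet's formula with the worst case $N = F_1 + F_3 + F_5 + \cdots$ giving the geometric-series bound $1/\varphi = \varphi - 1$. The only difference is cosmetic: the paper weights bin $k$ by $F_{k+1}$ (so each $C_1$/$S_2$ shifts the sum by $\pm 1$) whereas you weight it by $\varphi^{k-1}$ (shifts of $\pm(2-\varphi)$), and both routes land on the identical error bound.
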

\begin{proof}
    Similarly, we prove this with a relabeling of the board
	\begin{center}
		\begin{tabular}{ |c|c|c|c|c|c| } 
			\hline
			$2$ & $3$ & $5$ & $\cdots$ & $F_{k+1}$ & $\cdots$ \\ 
			\hline
		\end{tabular}
	\end{center}
	and observing that the sum of token values goes from $2N$ to $\sim \varphi N$ with the sum decreasing by 1 only by performing $C_1$ and increasing by 1 only by performing $S_2$. Note that the final sum is equal to shifting each Zeckendorf summands of $N$ forward by one, which is approximated by multiplying each by $\varphi$. By Binet's formula, we have $F_{k} = \frac{\varphi^{k+1}-(-1/\varphi)^{k+1}}{\sqrt{5}}$ due to how we index the Fibonacci sequence. Thus, the error of approximating the summand $F_{k+1}$ with $\varphi F_k$ is \[ \left\lvert\frac{(\varphi^{k+1}-(-1/\varphi)^{k+1})\varphi - (\varphi^{k+2} - (-1/\varphi)^{k+2})}{\sqrt{5}}\right\rvert  =  \frac{\varphi^2+1}{\varphi^{k+2}\sqrt{5}} \]
    The largest error happens when $N = F_1 + F_3 + F_5 + \cdots$ with error at most \[\frac{\varphi^2+1}{\varphi^3\sqrt{5}}\left(\sum_{i=0}^{\infty}\frac{1}{\varphi^{2i}}\right) = \frac{\varphi^2+1}{\varphi^3(1-1/\varphi^2)\sqrt{5}} = \frac{1}{\varphi} = \varphi - 1\]
    which yields the desired.
\end{proof}
\noindent As a corollary, we prove Theorem \ref{upperboundours}.
\upperBound*
\begin{proof}
    Using the relabeling of the board given by
    \begin{center}
		\begin{tabular}{ |c|c|c|c|c|c| } 
			\hline
			$3$ & $4$ & $5$ & $\cdots$ & $k+2$ & $\cdots$ \\ 
			\hline
		\end{tabular}
    \end{center}
    we get that \begin{align*}
        2MC_1+2MC_2+3MC_3+4MC_4+&\cdots  = 3N - 2Z(N) - Z_I(N) \\
        + MS_3 + MS_4 + &\cdots
    \end{align*}
    Applying Lemma \ref{exceptiondiff}, we get \begin{align*}
        MC_1+2MC_2+3MC_3+4MC_4+&\cdots  \leq 3N - 2Z(N) - Z_I(N) \\
        +(2-\varphi)N - (\varphi-1) + MS_2 + MS_3 + MS_4 + &\cdots
    \end{align*}
    Thus, if we subtract the excess $MC_2 + 2MC_3 + 3MC_4 + \cdots$ from the left hand side, we get the upper bound \begin{align*}
        MC_1+MC_2+MC_3+MC_4+&\cdots  \leq (1+\varphi)N - 2Z(N) - Z_I(N)+(\varphi-1) \\
        + MS_2 + MS_3 + MS_4 + &\cdots
    \end{align*} 
    which we round down as the number of all moves performed is an integer. Note that if we consider $N = F_n - 1$ then we get that there are no $C_2,C_3,$ performed within the longest game. Furthermore, if we consider $N = F_{2n}-1$, then our approximation error is less than $1$ and thus must be sharp after rounding down.
\end{proof}

Recall that \cite{baird2018zeckendorf} proved non-constructively that Player $2$ always has a winning strategy for any $N \geq 3$: finding such a winning strategy remains open. It is generally believed (\cite{miller2022convo}) that the key to such a strategy lies in understanding ``parity swaps": distinct sequences of moves of differing length which yield the same effect on the board. The following definition follows from the easy observation that whenever playable, the sequences of moves
\begin{align*}
    & S_k \to S_{k-1} \to \cdots \to S_{k-\ell+1} \to C_{k-\ell}
    & C_{k-\ell} \to S_{k-\ell+1} \to \cdots \to S_{k-1} \to S_k
\end{align*}
both have the same effect on the board as the move $C_k$, for some $k \geq 2$.
\begin{definition} \label{prefix_suffix}
For any $\ell \geq 0$ and $k \geq 2$, call a sequence of moves of form $S_k \to S_{k-1} \to \cdots \to S_{k-\ell+1} \to C_{k-\ell}$ an $(\ell,k)$-\textit{prefix}, and a sequence of moves of form $C_{k-\ell} \to S_{k-\ell+1} \to S_{k-\ell+2} \to \cdots \to S_k$ an $(\ell,k)$-\textit{suffix}. We call an $(\ell,k)$-prefix a \textit{prefix of} $C_k$, and an $(\ell,k)$-suffix a \textit{suffix of} $C_k.$
\end{definition}
It should be emphasized that an $(\ell,k)$-prefix or $(\ell,k)$-suffix corresponds to an equivalent action as the combine move $C_k$, for all lengths $\ell \geq 0$ and $k \geq 2$; the combine move is thus ``expanded" via a sequence of $\ell$ splitting moves with contiguous indices. The next result captures the intuition that the variation in game lengths is entirely due to the parity swaps described in Definition \ref{prefix_suffix}, namely by describing arbitrary Zeckendorf games via \textit{permutations of suffixes.}
\begin{proposition} \label{suffix}
Any Zeckendorf game on input $N$ can be achieved by taking a shortest game, expanding combine moves via suffixes, then shifting the splitting moves.
\end{proposition}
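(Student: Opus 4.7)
The plan is to induct on the number of split moves $s$ in the game $\mathcal G = (M_1, \ldots, M_\lambda)$. For the base case $s = 0$, Theorem \ref{shortest_game} ensures $\mathcal G$ is already a shortest game, so the proposition holds trivially with no expansions or shifts. For the inductive step, suppose $\mathcal G$ has $s \geq 1$ splits, and let $M_j = S_k$ be the first split in $\mathcal G$; all earlier moves $M_1, \ldots, M_{j-1}$ are combines. Since $C_{k-1}$ is the unique combine that produces a copy of $F_k$, and $S_k$ consumes two copies of $F_k$ at position $j$, there must be at least two $C_{k-1}$ moves among $M_1, \ldots, M_{j-1}$.

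The crucial technical step is to shift $S_k$ leftward past intervening combines until it lies immediately after some $C_{k-1}$. I claim that each swap of $S_k$ with a preceding combine $C_p$ (where $p \neq k-1$) preserves validity. If $p \notin \{k, k+1\}$, then $C_p$ neither produces nor consumes $F_k$, so the $F_k$ count in the state before $C_p$ already meets the $\geq 2$ requirement for $S_k$ to fire first; moreover $S_k$ does not consume any tokens $C_p$ needs. If $p \in \{k, k+1\}$, then $C_p$ consumes an $F_k$, so the pre-$C_p$ state must contain $\geq 3$ copies of $F_k$, enough for $S_k$ to fire first while leaving at least one $F_k$ for $C_p$. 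Since two $C_{k-1}$'s exist before $S_k$, the leftward shift terminates at some $C_{k-1}$.

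The adjacent pair $(C_{k-1}, S_k)$ constitutes a $(1,k)$-suffix whose net effect equals that of the single combine $C_k$, so contracting by replacing this pair with $C_k$ yields a valid Zeckendorf game $\mathcal G'$ of length $\lambda - 1$ with $s - 1$ splits. By the inductive hypothesis, $\mathcal G'$ arises from a shortest game $\mathcal G_0'$ on input $N$ via suffix expansions and shifting of splits. Let $C_{k'}$ denote the combine in $\mathcal G_0'$ whose suffix expansion in $\mathcal G'$ starts with the $C_k$ at the contraction site, so $C_{k'}$ is expanded to an $(\ell, k')$-suffix with $k = k' - \ell$. To recover $\mathcal G$ from $\mathcal G_0'$, apply the same expansions as for $\mathcal G'$ except extend the expansion of $C_{k'}$ to an $(\ell + 1, k')$-suffix (which introduces the additional split $S_k$); then apply the same shifts as for $\mathcal G'$ together with one additional shift placing the newly introduced $S_k$ at its position $j$ in $\mathcal G$, which is the reverse of the leftward shift performed at the start of this step.

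The main obstacle is the rigorous verification of the shift argument: a careful case analysis on the type of intervening combine $C_p$ and precise tracking of Fibonacci token counts is required to ensure each swap preserves playability of both moves. A secondary challenge lies in the reconstruction bookkeeping, where one must verify that the extended expansion of $C_{k'}$ combined with the specified shifts recovers $\mathcal G$ exactly; this involves tracking how the new $S_k$ fits into the overall sequence of shifts alongside the splits carried over from $\mathcal G'$.
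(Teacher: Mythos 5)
Your proof is correct and follows essentially the same route as the paper's: repeatedly take the earliest split $S_k$, commute it leftward until it sits immediately after a $C_{k-1}$, contract the resulting $(1,k)$-suffix into $C_k$, and induct on the number of splits, reversing the process to recover the original game. The paper's own proof is only a terse sketch of this procedure; your swap-validity case analysis and the reconstruction bookkeeping supply the details it omits.
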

\begin{proof}
For an arbitrary Zeckendorf game, greedily take the earliest split move of a game, move it back to be played as early as possible, and compress it into a combine move. Proceed similarly until we only have combine moves, from which we achieve the original game by reversing the compressions and playing split moves later.
\end{proof}
We can interpret the statement of Proposition \ref{suffix} as saying that we can greedily embed an arbitrary Zeckendorf game on input $N$ into a shortest game on input $N$ in a natural way.

\subsection{Shortest Games} Proposition \ref{suffix} suggests that a study of shortest Zeckendorf games might be fruitful, as any particular Zeckendorf game can be understood as an extension and permutation of a particular shortest game. We first observe the following.

\begin{proposition} \label{all_combines}
Shortest games are exactly those games which strictly use combine moves. Furthermore, such a game exists for any input $N$, and the multiset of combine moves for any such shortest game is unique.
\end{proposition}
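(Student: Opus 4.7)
The plan is to establish each of the three claims in turn, leveraging Theorem \ref{shortest_game}, Proposition \ref{all_states_reachable}, and Lemma \ref{movesum}. The entire proposition should essentially be a clean assembly of previously stated results.

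For the equivalence ``shortest iff all combines,'' both directions follow immediately from Theorem \ref{shortest_game}, which simultaneously fixes the number of combines in any game at $N - Z(N)$ and the shortest game length at $N - Z(N)$. A combine-only game therefore has length exactly $N - Z(N)$ and attains the minimum. Conversely, a game of length $N - Z(N)$ cannot contain any non-combine move, since its combine-move quota already accounts for all of its moves. I would also note at this point the fact that in any shortest game we must have $MS_k = 0$ for every $k$, as this will be needed in the uniqueness step.

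For existence, I would apply Proposition \ref{all_states_reachable} to the Zeckendorf decomposition of $N$ itself, producing a sequence of combine moves from $\{F_1^N\}$ ending at that decomposition. It then remains only to check that the terminal configuration admits no further moves: the Zeckendorf decomposition has no duplicates (so no splits are available) and no consecutive indices (so no combines are available), so the constructed sequence is a valid Zeckendorf game, which by the previous paragraph is a shortest game.

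For uniqueness of the multiset, I would specialize Lemma \ref{movesum} to shortest games. With $MS_k = 0$ throughout, the identity for each $2 \leq k \leq n-1$ reduces to the statement that $MC_k + MC_{k+1} + \cdots + MC_{n-1}$ takes the same value across all shortest games, and successive differences pin down each $MC_k$ for $k \geq 2$ individually. The remaining $MC_1$ is then forced by the total $MC_1 + MC_2 + \cdots + MC_{n-1} = N - Z(N)$, using that $MC_k = 0$ for $k \geq n$ since $F_{n+1} > N$ prevents any bin with index exceeding $n$ from being occupied. The one step requiring care is confirming that the invariants of Lemma \ref{movesum} apply verbatim to the shortest game (the lemma is stated for all Zeckendorf games on input $N$, so this is automatic once we have used the previous step to guarantee all splits are absent). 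Beyond this bookkeeping, I do not anticipate any genuine obstacle.
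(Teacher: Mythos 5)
Your proposal is correct. The first two claims are handled essentially as in the paper: the paper re-derives the equivalence from the observation that each move decreases the piece count by at most one (with equality exactly for combines), whereas you import this via Theorem \ref{shortest_game}, and both proofs obtain existence by applying Proposition \ref{all_states_reachable} to the Zeckendorf decomposition itself (your explicit check that the terminal configuration admits no further moves is a nice touch the paper leaves implicit). Where you genuinely diverge is the uniqueness step. The paper writes down, for each bin $1 \le k \le n$, the conservation equation for its net height change under combine-only play, obtaining a triangular system $MC_{n-1} = z_n$, $MC_{n-2} - MC_{n-1} = z_{n-1}$, \dots, which is solved by back-substitution. You instead specialize the invariants of Lemma \ref{movesum} to the case $MS_k = 0$ and take successive differences of the tail sums $MC_k + \cdots + MC_{n-1}$ to pin down each $MC_k$ for $k \ge 2$, recovering $MC_1$ from the total count. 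Both are linear-algebraic determinations from conserved quantities; your route has the advantage of reusing a lemma already proved (and stated for arbitrary games, so it applies verbatim), while the paper's direct system is self-contained and makes the dependence on the Zeckendorf digits $z_i$ explicit. The only points requiring care in your version --- that the constants in Lemma \ref{movesum} depend only on $N$ and not on the game, and that $MC_k = 0$ for $k \ge n$ --- are both addressed, so there is no gap.
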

\begin{proof}
A move decreases the number of pieces by at most one, so $N-Z(N)$ lower-bounds the number of moves necessary, achieved exactly by those games using strictly combine moves; such games exist by Proposition \ref{all_states_reachable}. To establish uniqueness of the multiset of combine moves for any such game, say $F_n \leq N < F_{n+1}$, and study bin $k$ for $1 \leq k \leq n$. Moves affecting $h_k$ are known precisely: letting the Zeckendorf decomposition of $N$ be denoted $(z_1, z_2, \dots, z_n)$ (where $z_i \in \{0,1\}$), this yields the system
\begin{align*}
    N-2MC_1 - MC_2 & = z_1 \\
    MC_1 - MC_2 - MC_3 & = z_2 \\
    & \cdots \\
    MC_{n-3} - MC_{n-2} - MC_{n-1} & = z_{n-2} \\
    MC_{n-2} - MC_{n-1} & = z_{n-1} \\
    MC_{n-1} & = z_n = 1
\end{align*}
from which it easily follows that this system must have a unique solution.
\end{proof}

\begin{proposition} \label{prop:small_combines}
For $n = n(N)$ and any $\delta \in (0,1)$,
\begin{align*}
    \lim_{N \to \infty} \max_{\mathcal G \in \Omega_N} \frac{\#\{C_k \in \mathcal G, k > \delta n\}}{\#\{\text{Combine moves in } \mathcal G\}} = \lim_{N \to \infty} \max_{\mathcal G \in \Omega_N} \frac{\#\{C_k \in \mathcal G, k > \delta n\}}{N-1} = 0.
\end{align*}
\end{proposition}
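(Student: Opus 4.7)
The plan is to reduce both limits to controlling the numerator $\#\{C_k \in \mathcal G : k > \delta n\}$ by a quantity that is $o(N)$ uniformly in $\mathcal G \in \Omega_N$. By Proposition \ref{all_combines}, the first denominator equals $N - Z(N)$, and since $Z(N) \leq n = O(\log_\varphi N)$ we have $N - Z(N) = N(1+o(1))$, matching $N-1$ of the second ratio up to lower order. It thus suffices to exhibit a single upper bound $B(N)$ on the numerator, independent of $\mathcal G$, with $B(N)/N \to 0$.

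The key tool is Lemma \ref{movesum}. Fix $\delta \in (0,1)$ and set $k_0 := \lfloor \delta n \rfloor + 1$, so that $\{k \in \mathbb N : k > \delta n\} = \{k_0, k_0+1, \dots\}$. For $N$ large enough we have $2 \leq k_0 \leq n-1$, and the constraint $N < F_{n+1}$ forces $MC_k = 0$ for all $k \geq n$ (since $F_{n-1} + F_n = F_{n+1}$ would exceed the board sum, so $C_n$ is never playable, and pieces $F_{k}$ for $k \geq n+1$ never appear). Consequently
\[ \#\{C_k \in \mathcal G : k > \delta n\} \;=\; \sum_{k=k_0}^{n-1} MC_k \;\leq\; MS_{k_0} + \sum_{k=k_0}^{n-1} MC_k, \]
which by Lemma \ref{movesum} is a constant $B(N)$ depending only on $N$ and $k_0$, hence uniform over $\mathcal G$.

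To evaluate $B(N)$, I would compute the difference between the initial and terminal weighted sums under the relabeling from Lemma \ref{movesum}. The initial configuration of $N$ copies of $F_1$ has weighted sum $N$. Writing the relabeled value of bin $k_0 + i$ as $F_{k_0+i} - a_i$ and solving the resulting recurrence $a_i = a_{i-1} + a_{i-2} + 1$ with $a_1 = 1$, $a_2 = 2$ yields $a_i = F_{i+1} - 1$, so the terminal Zeckendorf configuration $(z_1, \dots, z_n)$ has weighted sum $N - \sum_{i \geq 1} z_{k_0+i}(F_{i+1}-1)$. Therefore
\[ B(N) \;=\; \sum_{i=1}^{n - k_0} z_{k_0+i}(F_{i+1}-1) \;\leq\; \sum_{i=1}^{n-k_0} F_{i+1} \;=\; F_{n-k_0+3} - 3. \]
Binet's formula then gives $F_{n-k_0+3} = O(\varphi^{n-k_0}) = O(\varphi^{(1-\delta)n})$, whereas $N \geq F_n = \Omega(\varphi^n)$, so $B(N)/N = O(\varphi^{-\delta n}) \to 0$ as $N \to \infty$. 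The only real obstacle is solving the relabeling recurrence carefully so that $B(N)$ comes out of order $\varphi^{(1-\delta)n}$ rather than anything growing like $\varphi^n$; once that is verified, uniformity in $\mathcal G$ and the desired limits follow immediately.
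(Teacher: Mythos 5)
Your argument is correct, but it takes a genuinely different route from the paper's. The paper proves the proposition by a charging argument: each combine move $C_k$ with $k > \delta n$ corresponds to a token that must have been advanced to a high bin by at least $\lfloor \delta n/2 \rfloor$ combine moves of index $\leq \delta n$, and these charge sets are disjoint, so the numerator is at most $(N-1)/(\lfloor \delta n/2 \rfloor + 1)$, which is $o(N)$ since $n \asymp \log_\varphi N$. You instead invoke the invariant of Lemma \ref{movesum}: with $k_0 = \lfloor \delta n\rfloor + 1$, the quantity $MS_{k_0} + \sum_{k=k_0}^{n-1} MC_k$ is a game-independent constant $B(N)$, which you evaluate by tracking the relabeled weighted sum from the initial to the terminal configuration, correctly solving the correction recurrence $a_i = a_{i-1}+a_{i-2}+1$ to get $a_i = F_{i+1}-1$ and hence $B(N) \leq F_{n-k_0+3} - 3 = O(\varphi^{(1-\delta)n}) = O(N^{1-\delta})$. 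Your bound is quantitatively much stronger (relative decay $N^{-\delta}$ versus the paper's $1/\log N$), at the cost of relying on the exact invariant and its evaluation rather than a purely combinatorial counting. Two small points: the fact that the first denominator is $N - Z(N)$ for \emph{every} game is really Theorem \ref{shortest_game} (Proposition \ref{all_combines} only addresses shortest games), and you should note explicitly that for fixed $\delta$ the condition $2 \leq k_0 \leq n-1$ needed to apply Lemma \ref{movesum} holds once $n$ is large enough, which suffices for the limit. Neither affects correctness.
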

\begin{proof}
For $\mathcal G \in \Omega_N$, consider all combine moves $C_k$ with $k > \delta n$; each such combine move corresponds to a particular token ``jumped," for which we consider all combine moves with index $\leq \delta n$ that led this token to land in this position, of which there must be at least $\lfloor \delta n / 2 \rfloor$. The sets of combine moves with index $\leq \delta n$ corresponding to distinct combine moves with index $> \delta n$ are observed to be disjoint, so
\begin{align*}
    \frac{\#\{C_k \in \mathcal G, k > \delta n\}}{N-1} \leq \frac{1}{\lfloor \delta n / 2 \rfloor+1}
\end{align*}
which vanishes as $N \to \infty$.
\end{proof}

Proposition \ref{all_combines} yields the following interesting lower bound\footnote{We suspect the lower bound of Theorem \ref{lower_bound} to be somewhat loose, as much is lost when crudely pursuing the interweaving of the Dyck paths $\pi_2, \pi_3, \dots, \pi_{n-1}$ (see the proof for details).} on the number of shortest Zeckendorf games with input $N$.
\catBound*
\begin{proof}
It suffices to study $N = F_n$, since the number of distinct shortest Zeckendorf games is increasing in $N$: for input $N \neq F_n$, one can first play a shortest Zeckendorf game on input $F_n$, then proceed by always playing the rightmost available combine move to achieve a shortest game on input $N$ by Proposition \ref{all_combines}. (See Lemma \ref{add_1}, where we understand this as incrementing instances of $F_1$s after playing a shortest game on $F_n$.) The Zeckendorf decomposition is $(z_1, z_2, \dots, z_{n-1}, z_n) = (0, 0, \dots, 0, 1)$, where $z_i$ denotes the number of instances of $F_i$ in the decomposition. By Proposition \ref{all_combines}, there exists a unique multiset of combine moves constituting a shortest game: by solving the system above, this multiset is defined by
\begin{align*}
    (MC_1, MC_2, \dots, MC_{n-1}) = (F_{n-2}, F_{n-3}, F_{n-4}, \dots, F_3, F_2, F_1, F_0) 
\end{align*}
where we let $F_0 = 1$. A permutation of these moves constitutes a game if and only if every move is valid, i.e., no move would force the height of any bin to become negative. Specifically, at any intermediate point in the sequence, the number of $C_1$'s played is no less than the sum of the number of $C_2$'s and $C_3$'s played (bin $2$ is nonnegative), the number of $C_2$'s is no less than the number of $C_3$'s and $C_4$'s played (bin $3$ is nonnegative), and so on, to the number or $C_{n-2}$'s being no less than the number of $C_{n-1}$'s played (bin $n-1$ is nonnegative). (We need not study bin $1$ or bin $n$, which will necessarily always have nonnegative height.)

Let us restrict our attention to the moves $C_1, C_2, C_3$ (moves affecting the height of bin $2$): the number of permutations of these moves such that the number of $C_1$'s played being no less than the sum of the number of $C_2$'s and $C_3$'s performed holds at any point in the game is in bijective correspondence with (up-down) Dyck paths on $j=F_{n-2}$ ($C_1 \leftrightarrow U$; $C_2, C_3 \leftrightarrow D$), the number of which is $\text{Cat}(F_{n-2})$. Similarly, by studying moves affecting bin $k \geq 2$, we achieve bijective correspondences with Dyck paths on $j = F_{n-k}$. 

For any choice of Dyck paths $\pi_2, \pi_3, \dots, \pi_{n-1}$ on $j = F_{n-2}, F_{n-3}, \dots, F_1$, respectively, there exists a shortest Zeckendorf game on input $N$ where the ordering of the relevant moves is consistent with the bijections described above. To construct such a game, begin by placing $2F_{n-2}$ moves along a line, labeling $F_{n-2}$ of them as $C_1$ in a manner consistent with $\pi_2$ ($C_1 \leftrightarrow U$). Among the $F_{n-2} = F_{n-3} + F_{n-4}$ unlabeled moves, label $F_{n-3}$ of them as $C_2$ (importantly, including the first $D$ move) in a manner consistent with $\pi_3$ (not too many unlabeled moves between consecutive instances of $C_2$) and the other $F_{n-4}$ as $C_3$. Now add $F_{n-5}$ instances of $C_4$ along this line to complete $\pi_3$ (include all missing $C_4 \leftrightarrow D$ moves) while respecting $\pi_4$ (not too many unlabeled moves between consecutive instances of $C_3$). Specifically, construct a labeling of the $F_{n-3} = F_{n-4} + F_{n-5}$ $D$ moves in $\pi_3$ with $F_{n-4}$ $C_3$'s and $F_{n-5}$ $C_4$'s such that the first $D$ is labeled $C_3$, and there are not too many unlabeled $D$ moves between consecutive instances of $C_3$ (with respect to $\pi_4$). Insert $C_4$'s to be adjacent to established instances of $C_2$ and $C_3$ to be consistent with this labeling. 

Continue by similarly adding, for $k \geq 5$, $F_{n-k-1}$ instances of $C_k$, completing $\pi_{k-1}$ while respecting $\pi_k$, until we add $F_0 = 1$ instance of $C_{n-1}$ such that we complete $\pi_{n-1}$. This results in a shortest Zeckendorf game with the ordering of the relevant moves being consistent with $\pi_2, \pi_3, \dots, \pi_{n-1}$.
\end{proof}

\section{The Set of Possible Game Lengths Constitute An Interval}

In this section, we prove Theorem \ref{all_lengths}, which we restate below.

\allLengths*

We begin by establishing some intermediate results that we shall invoke in the proof of the main theorem. In the first lemma, in discussing the position given by the Zeckendorf decomposition of $N-1$, we refer to the terminal position of the Zeckendorf game when played on input $N-1$.
\begin{lemma} \label{add_1}
Consider the Zeckendorf game on input $N$, satisfying $F_n \leq N < F_{n+1}$, from the position given by the Zeckendorf decomposition on $N-1$ (as specified above) with an additional instance of $1$. There is a unique sequence of moves from this configuration to the Zeckendorf decomposition of $N$, all of which are combine moves. Furthermore, the number of such combine moves performed is bounded by $\lfloor n/2 \rfloor$.
\end{lemma}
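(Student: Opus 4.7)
The plan is to classify by the smallest index $k$ appearing in the Zeckendorf decomposition of $N$, and to exploit the telescoping identity $F_k - 1 = F_{k-1} + F_{k-3} + F_{k-5} + \cdots$, which terminates at $F_1$ when $k$ is even and at $F_2$ when $k$ is odd. Writing $N = F_k + M$ with $M$ either $0$ or a sum of non-consecutive Fibonacci numbers of index $\geq k+2$, one immediately reads off the Zeckendorf decomposition of $N-1$ as the telescoping sum together with $M$, and hence the explicit form of the starting configuration.

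In the trivial case $k=1$, the decomposition of $N-1$ contains no $F_1$ or $F_2$ term, so adding back the extra $F_1$ already yields the Zeckendorf decomposition of $N$ and no moves are needed. For $k \geq 2$ the starting configuration is either
\[
2F_1 + F_3 + F_5 + \cdots + F_{k-1} + M \quad (k \text{ even}) \qquad \text{or} \qquad F_1 + F_2 + F_4 + \cdots + F_{k-1} + M \quad (k \text{ odd}).
\]
The heart of the proof is to verify that in this configuration the unique legal move is the combine $C_1$ (when $k$ is even) or $C_2$ (when $k$ is odd). No split is available, since the only bin of height $\geq 2$ is bin $1$ in the even case (whose only action is $C_1$), and every other bin is a singleton. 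No alternative combine is available either: among the bins produced by the cascade the indices are spaced by exactly two, so the only pair of consecutive indices is the one at the very bottom, and the smallest index in $M$ is at least $k+2$, which cannot be consecutive with $F_{k-1}$.

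I would then close by a short induction on $k$: performing the forced move yields a configuration of the same shape but with $k$ decreased by $2$, so the unique sequence is $C_1, C_3, \ldots, C_{k-1}$ in the even case and $C_2, C_4, \ldots, C_{k-1}$ in the odd case, and terminates exactly at $F_k + M$, the Zeckendorf decomposition of $N$. In both cases the number of moves is $\lfloor k/2 \rfloor \leq \lfloor n/2 \rfloor$, which gives the stated bound. The only delicate point is the bookkeeping at the interface between the cascading bins and $M$: one must check that after each forced combine, the newly created bin $F_{j+1}$ neither duplicates nor is consecutive to the next cascade bin $F_{j+3}$ or to the smallest bin of $M$, so that the induction hypothesis applies and no alternative move path exists. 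The gaps of exactly two in the cascade and of at least two between the cascade and $M$ are what make this rigid.
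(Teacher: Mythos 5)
Your proof is correct and follows essentially the same route as the paper's: both arguments identify the forced cascade of combine moves $C_1, C_3, C_5, \ldots$ (or $C_2, C_4, \ldots$) starting from the Zeckendorf decomposition of $N-1$ augmented by one extra $F_1$. Your use of the telescoping identity $F_k - 1 = F_{k-1} + F_{k-3} + \cdots$ makes the shape of that decomposition and the exact move count $\lfloor k/2 \rfloor$ explicit where the paper leaves the cascade conditional ("iff the decomposition of $N-1$ contains an $F_3$, \ldots") and only bounds the count, but the underlying argument is the same.
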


\begin{proof}
Since no moves can be played from the Zeckendorf decomposition on $N-1$, any playable move from this position (on input $N$) is necessarily either $C_1$ or $C_2$, possible if and only if $F_1 = 1$ or $F_2 = 2$ is in the Zeckendorf decomposition of $N-1$, respectively (such cases are disjoint, since the Zeckendorf decomposition on $N-1$ does not contain consecutive Fibonacci numbers); otherwise, no moves can be played. We study both cases.
\begin{itemize}
    \item If the Zeckendorf decomposition of $N-1$ contains an instance of $F_1$, then after playing $C_1$, it is easy to see that the only possible move is $C_3$ (iff the decomposition of $N-1$ contains an $F_3$), then $C_5$ (iff the decomposition of $N-1$ contains an $F_5$), and so on, until we exhaust all playable moves.
    \item If the Zeckendorf decomposition of $N-1$ contains an instance of $F_2$, then after playing $C_2$, it is easy to see that the only possible move is $C_4$ (iff the decomposition of $N-1$ contains an $F_4$), then $C_6$ (iff the decomposition of $N-1$ contains an $F_6$), and so on, until we exhaust all playable moves.
\end{itemize}
In both cases, it is straightforward to confirm that we cannot play strictly more than $\lfloor n/2 \rfloor$ such combine moves, as otherwise there must be an instance of $F_k$ for $k$ either $n+1$ or $n+2$ after completing this sequence of moves, a contradiction on $N < F_{n+1}$.
\end{proof}

We shall also frequently use the following lemma. Intuitively, this states that if we have isolated the Zeckendorf game to a suffix of bins all of height $0$ or $1$, and this suffix is separated from earlier bins by a bin of height $0$ (say bin $k$), then we can ignore bins $1, \dots, k$ for the remainder of the game.
\begin{lemma} \label{no_back}
For some $1 \leq k \leq n$, let $(x_1, x_2, \dots, x_k, x_{k+1}, \dots, x_n)$ denote the heights of bins $1, \dots, n$ at some point during a Zeckendorf game, with $x_i$ denoting the height of bin $i$. Assume $x_i \in \{0,1\}$ for $k \leq i \leq n$, $x_k = 0$, no playable moves involving bins $1, \dots, k$ exist, and we play according to Theorem \ref{longest_game}. Then heights $x_1, \dots, x_k$ remain fixed for the rest of the game.
\end{lemma}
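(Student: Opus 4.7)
The plan is to prove this by induction on \emph{rounds} of the longest-game dynamics, where a round consists of one combine move together with the (forced) cascade of splits it triggers. The invariant I would maintain at every round boundary is: (a) bins $1, \ldots, k$ retain their initial heights, and (b) every bin of index $\geq k+1$ has height in $\{0,1\}$. The base case is immediate: the hypotheses force $x_j \leq 1$ for every $j \leq k$ (else some $S_j$ or $C_1$ would be playable, contradicting the no-playable-move assumption) and forbid consecutive $1$s among bins $1, \ldots, k$ (else some $C_j$ with $j \leq k$ would be playable), and (b) is explicit.

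For the inductive step, I would first observe that under invariants (a) and (b) no split and no $C_1$ is playable, so Theorem~\ref{longest_game} prescribes the smallest-index combine; using (a) together with $x_k = 0$, this combine must be $C_j$ for some $j \geq k+2$. After $C_j$ empties bins $j-1$ and $j$ and increments $x_{j+1}$, the only newly playable move is $S_{j+1}$ (if $x_{j+1}$ reached $2$), and the resulting cascade $S_{j+1}, S_{j+2}, \ldots, S_{j+m}$ is forced in order because each subsequent split becomes available only after the previous one pushes its forward target to height $2$. Crucially, every move in the round involves only bins of index $\geq j-1 \geq k+1$, so invariant (a) is preserved automatically throughout the round.

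The main obstacle is preserving invariant (b) and, equivalently, ensuring that $S_{k+1}$ and $S_{k+2}$ never become playable, since either would deposit a token into bin $k-1$ or $k$ and violate (a). The key accounting is that each split $S_{j+i}$ in the cascade deposits one token into a \emph{backward target} at bin $j+i-2$ and one into a \emph{forward target} at bin $j+i+1$; the backward target was just emptied to $0$ by either $C_j$ (when $i \in \{1,2\}$) or by the preceding split $S_{j+i-2}$, so it only rises from $0$ to $1$, whereas the forward target transiently reaches $2$ before being split back down to $0$ on the next step, or at cascade termination simply rises from $0$ to $1$. In particular, $x_{k+1}$ can only be incremented by the cascade through the backward target of $S_{k+3}$, which arises only when $j = k+2$, and at that instant $x_{k+1}$ was just reset to $0$ by $C_{k+2}$, so $x_{k+1}$ returns only to $1$; a parallel argument for $x_{k+2}$ via the backward target of $S_{k+4}$ (relevant only for $j \in \{k+2, k+3\}$) gives the same bound. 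Thus $S_{k+1}, S_{k+2}$ remain unplayable, every bin of index $\geq k+1$ returns to $\{0,1\}$ at round's end, and iterating over rounds until the game terminates completes the induction.
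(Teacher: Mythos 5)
Your proof is correct. The move-level accounting is exactly what is needed: after a combine $C_j$ with $j \geq k+2$, the forced split cascade only ever deposits tokens backward into bins that were zeroed moments earlier (by $C_j$ itself or by the split two steps prior), so bins $k+1$ and $k+2$ never reach height $2$ and no move ever touches bins $1,\dots,k$. Your route differs from the paper's in its inductive structure: the paper performs strong induction on the suffix length $n-k$, reduces to the case $x_{k+1}=x_{k+2}=1$, writes out the entire telescoping multi-pass move sequence that consumes the maximal block of contiguous $1$s, and then re-invokes the hypothesis at the new zero position $r$; your argument instead inducts on single rounds (one combine plus its forced cascade, which are precisely the parenthesized groups in the paper's explicit sequence) and maintains a local invariant at round boundaries. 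Your organization is somewhat more self-contained -- it avoids both the case reduction and the global bookkeeping of the multi-pass sequence, and it makes explicit the key fact (backward deposits land only in just-emptied bins) that the paper dispatches with ``it follows immediately by studying the moves involved.'' The only phrase worth tightening is ``the only newly playable move is $S_{j+1}$'': the combine $C_{j+2}$ may also become newly playable when bin $j+1$ rises from $0$ to $1$, but since Theorem~\ref{longest_game} mandates splits over combines (other than $C_1$, which is never available here since bin $1$ is frozen at height at most $1$) and at most one bin has height $2$ at any instant, the cascade is forced exactly as you claim.
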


\begin{proof}
We prove this on $k = n-j$ by induction on $0 \leq j \leq n-2$. The statement trivially holds if $j \in \{0,1,2\}$; assuming it for all values less than $j \geq 3$, if $x_{k+1} = 0$ or $x_{k+2} = 0$ we can apply the induction hypothesis to $j-1$ or $j-2$, respectively, so assume $x_{k+1} = x_{k+2} = 1$. Take $r \leq n$ to be largest possible such that $x_{k+1} = \cdots = x_r = 1$: the game proceeds by playing according to Theorem \ref{longest_game}, i.e., by playing the sequence of moves 
\begin{align*}
    (C_{k+1} \to S_{k+3} \to S_{k+4} \to \cdots \to S_r) \to (C_{k+1} \to S_{k+3} \to \cdots \to S_{r-2}) \to \cdots
\end{align*}
where the final subsequence of moves, either $C_{k+1}$ or $C_{k+1} \to S_{k+3}$, depends on the parity of $r-k$. It follows immediately by studying the moves involved that $x_1, \dots, x_k$ remain unchanged during this sequence. Following this sequence, we have $x_r = 0$: invoke the induction hypothesis on $k=r$ (i.e., $n-r < j$) afterwards to complete the proof.
\end{proof}

We now proceed with the proof of Theorem \ref{all_lengths}.
\begin{proof}[Proof of Theorem \ref{all_lengths}]
We have confirmed this statement for $N \leq F_6-1 = 12$ via a computer check. Thus, for $N \geq F_6 = 13$, assume the statement holds for all input sizes at most $N-1$: we aim to show the result holds for $N$. We explicitly specify that $F_n \leq N < F_{n+1}$.

Let $I_1'$ denote the interval (by induction hypothesis) of possible Zeckendorf game lengths for input $N-1$: by Lemma \ref{add_1}, if we include an additional instance of $F_1$ to the Zeckendorf decomposition of $N-1$, there is a sequence of combine moves from the resulting configuration to the Zeckendorf decomposition of $N$. On input $N$, consider initially playing the Zeckendorf game (to completion) as if the input were $N-1$, then executing this sequence of moves to terminate the game. Let $I_1 = [L_1, R_1]$ denote the interval $I_1'$ shifted by the length of this sequence: by the preceding description, it follows that every game length in $I_1$ is achievable for input $N$. Furthermore, $L_1 = N - Z(N)$, since the game length $L_1$ as studied above results from playing strictly combine moves (see Lemma \ref{add_1}), which necessarily yields a shortest game by Proposition \ref{all_combines}, and for which the length is $N - Z(N)$ by Theorem \ref{shortest_game}.

By Theorem \ref{type_a_only}, it is possible to play the Zeckendorf game on input $F_n-1$ strictly using Type A moves. Let $I_2'$ denote the interval (by induction hypothesis) of possible Zeckendorf game lengths for input $F_n-1 \leq N-1$: on input $N$, consider initially playing the Zeckendorf game (to completion) as if the input were $F_n-1$, then executing the longest possible sequence of moves from the resulting position to terminate the game. Let $I_2 = [L_2, R_2]$ denote the interval $I_2'$ shifted by the number of moves of this longest sequence. By Theorem \ref{longest_game}, $R_2$ is necessarily the length of the longest Zeckendorf game on input $N$, since the above approach is consistent with playing Type A moves whenever possible. 

Thus, it suffices to show that $L_2 \leq R_1$ to complete the induction and establish the theorem, as this yields that $I_1 \cup I_2 = [L_1, R_2]$ is an interval of achievable game lengths on input $N$, with the endpoints being the shortest and longest possible game lengths for input $N$.

\medskip

The known game with length $R_1$ corresponds to playing the longest game on input $N-1$, then performing the unique sequence of combine moves to achieve the Zeckendorf decomposition of $N$; by Theorems \ref{longest_game} and \ref{type_a_only}, we can take the first phase of this game (longest game on input $N-1$) as playing the longest game on input $F_n-1$, then playing the longest remaining game (on input $N-1$). The known game with length $L_2$ corresponds to playing the shortest game on input $F_n-1$, then playing the longest remaining game until we achieve the Zeckendorf decomposition of $N$. By Lemma \ref{longest_game_2}, we can take the second phase of this game (playing the longest remaining game) as playing consistent to Theorem \ref{longest_game} until achieving the Zeckendorf decomposition. 

We henceforth denote these games with length $R_1$ and $L_2$ by $\mathcal G_1$ and $\mathcal G_2$, respectively, which we depict as follows; ``longest on $k$" indicates that during this phase, we think of the game as being played on input $k$, and leaving the appropriate number of instances of $F_1 = 1$ in the first bin fixed.
\begin{align*}
    & \mathcal G_1: \left[(\text{Longest on } F_n-1) \to (\text{longest remaining on } N-1)\right] \to (\text{combine moves on } N) \\
    & \mathcal G_2: (\text{Shortest on } F_n-1) \to (\text{longest remaining on } N)
\end{align*}
Denoting the difference between the lengths of the longest and shortest games on input $F_n-1$ by $\ell(n)$, game $\mathcal G_1$ took exactly $\ell(n)$ more moves than game $\mathcal G_2$ on input $F_n-1$. However, $\mathcal G_2$ may take longer afterwards to finish the game on input $N$: we aim to show that the discrepancy in the game lengths after initially playing on input $F_n-1$ is dominated by $\ell(n)$, from which we conclude that $\mathcal G_2$ is no longer than $\mathcal G_1$, i.e., that $L_2 \leq R_1$. In particular, it certainly suffices to show that the first two segments of $\mathcal G_1$ involve at least as many moves as the first two segments of $\mathcal G_2$ to establish the result: this is how we shall proceed. 

By Theorem \ref{longest_game} and Lemma \ref{longest_game_2}, we can choose how we would like to play a longest game after playing the game on input $F_n-1$ (specifically, we can fix an ordering on Type A moves which determines what we play when given multiple Type A moves): until the two games diverge\footnote{We shall assume this does happen, as otherwise the lengths of the second segments of $\mathcal G_1$ and $\mathcal G_2$ are equal, and thus the inequality $L_2 \leq R_1$ is immediate.}, pursue a longest remaining game by always playing the rightmost Type A move whenever a Type A move is playable. Following this ordering on Type A moves, we study the first move on which games $\mathcal G_1$ and $\mathcal G_2$ deviate. This move is necessarily either $C_1$ or $C_2$ in game $\mathcal G_2$ (the move must not have been playable in game $\mathcal G_1$, and thus must involve bin $1$), and by the ordering on Type A moves established before, bins $2, 3, \dots, n$ are either $0$ or $1$ when it is played. We perform casework on which move the two games deviate on.

\medskip

\textbf{Case 1: Move is $C_1$.} In game $\mathcal G_1$, this configuration can be represented by the vector $(1,x_2,x_3,\dots,x_n)$, where $x_i \in \{0,1\}$ for $i \geq 2$ denotes the height of the $i$th bin (the first entry would be a $2$ for game $\mathcal G_2$). Let us first study the setting $x_2 = 0$, and consider what happens after game $\mathcal G_2$ plays $C_1$. If $x_3=0$, then by Lemma \ref{no_back} applied to $k=3$, both games are consistent on bins $4, \dots, n$ so that game $\mathcal G_2$ takes one more move than $\mathcal G_1$ to finish. Otherwise (i.e., $x_3=1$), Lemma \ref{no_back} on $k=2$ yields that game $\mathcal G_1$ works strictly over bins $3, \dots, n$, and Lemma \ref{no_back} on $k=1$ yields that game $\mathcal G_2$ works strictly over the bins $2, \dots, n$ (i.e., bin $1$ becomes irrelevant). Thus, the resulting setting corresponds exactly to Case 2 over the $n-1$ bins $2, \dots, n$; here, we have an upper bound of $n-1$ for the number of additional moves $\mathcal G_2$ takes, for a total upper bound of $(n-1)+1=n$ (with the $C_1$ in $\mathcal G_2$) for the number of additional moves $\mathcal G_2$ takes.

Thus, assume $x_2 = 1$, and let position $k+1$, with $k \geq 2$, denote the first index that is $0$. Study the length-$k$ prefix $(1,1,1,\dots,1)$ (in game $\mathcal G_1$; first entry is $2$ in game $\mathcal G_2$) with no zero entries; we can exactly describe how the two games proceed when playing according to the longest game paradigm of Theorem \ref{longest_game}.
\begin{itemize}
    \item Game $\mathcal G_1$: We play the sequence of moves
    \begin{align*}
        (C_2 \to S_3 \to S_4 \to \cdots \to S_k) \to (C_2 \to S_3 \to \cdots \to S_{k-2}) \to \cdots
    \end{align*}
    continuing similarly until one of the first two bins is empty (so we cannot play $C_2$). In general, shorten the contiguous $1$s down two indices and repeat until one of the first two bins is empty (the bin which ends up empty depends on the parity of $k$).
    
    \item Game $\mathcal G_2$: We play the sequence of moves 
    \begin{align*}
        (C_1) \to (S_2) \to (S_3 \to C_1) \to (S_4 \to S_2) \to \cdots
    \end{align*}
    continuing similarly until we play all splitting moves $S_j$ for $j \leq k$. In general, when first playing $S_j$, we play the sequence $S_j \to S_{j-2} \to \cdots$ (final move is $S_2$ or $C_1$, depending on parity of $j$), and this continues until position $k$.
\end{itemize}
After these subroutines, it is straightforward to confirm that bin $k$ is empty, bins $k+1, \dots, n-1$ are all $0$ or $1$ with heights agreeing between $\mathcal G_1$ and $\mathcal G_2$, and no playable moves involving bins $1, \dots, k$ exist. Thus, both games proceed strictly over bins $k+1$ to $n-1$ by Lemma \ref{no_back}, and perform the same sequence of moves; we need only compute the difference in the lengths of these two subroutines on this prefix of length $k$.
\begin{itemize}
    \item The length of the sequence of moves in the game of length $L_2$ is
    \begin{align*}
        \begin{cases}
            2 \sum_{j=1}^{(k-1)/2} j + \frac{k+1}{2} & k \text{ odd,} \\
            2 \sum_{j=1}^{k/2} j & k \text{ even.}
        \end{cases}
    \end{align*}
    \item The length of the sequence of moves in the game of length $R_1$ is
    \begin{align*}
        \begin{cases}
            \sum_{j=1}^{(k-1)/2} (k-2j+1) = 2 \sum_{j=1}^{(k-1)/2} j & k \text{ odd,} \\
            \sum_{j=1}^{k/2} (k-2j+1) = \sum_{j=1}^{k/2} (2j-1) = 2 (\sum_{j=1}^{k/2} j) - \frac{k}{2} & k \text{ even.}
        \end{cases}
    \end{align*}
\end{itemize}
We can thus study this difference exactly: the difference is given by 
\begin{align*}
    \begin{cases}
        \left[ 2 \sum_{j=1}^{(k-1)/2} j + \frac{k+1}{2} \right] - 2 \sum_{j=1}^{(k-1)/2} j = \frac{k+1}{2} = \lceil k/2 \rceil & k \text{ odd} \\
        2 \sum_{j=1}^{k/2} j - \sum_{j=1}^{k/2} (2j-1) = \frac{k}{2} & k \text{ even}
        \end{cases}
\end{align*}
so in general, the difference is bounded by $\lceil n/2 \rceil \leq n$.

\medskip

\textbf{Case 2: Move is $C_2$.} In game $\mathcal G_1$, this configuration can be represented by the vector $(0,x_2,x_3,\dots,x_n)$, where $x_i \in \{0,1\}$ for $i \geq 2$ denotes the height of the $i$th bin (the first entry would be a $1$ for game $\mathcal G_2$) and $x_2 = 1$ (since otherwise $C_2$ cannot be played in game $\mathcal G_2$). Let us first study the setting $x_3 = 0$, and consider what happens after game $\mathcal G_2$ plays $C_2$. Applying Lemma \ref{no_back} on $k=3$ for $\mathcal G_1$ and $k=2$ for $\mathcal G_2$ yield that both games work strictly over bins $3, \dots, n$, and this reduces to the same setting on the suffix of bins $3, \dots, n$. Say we reduce the problem to a suffix with length reduced by $2$ in this manner $m$ times, so we study the case where $x_3 = 1$ over $n-2m$ bins: by extracting the bound in the following argument (i.e., the $x_3 = 1$ case) for the number of additional moves $\mathcal G_2$ takes, this yields a bound of $m + (n-2m) \leq n$.

Thus, assume $x_3 = 1$, and let position $k+1$, with $k \geq 3$, denote the first index that is $0$. Study the length-$k$ prefix $(0,1,1,\dots,1)$ (in game $\mathcal G_1$; first entry is $1$ in game $\mathcal G_2$): we can explicitly describe how the two games necessarily proceed when playing according to the longest game paradigm.
\begin{itemize}
    \item Game $\mathcal G_1$: We play the sequence of moves
    \begin{align*}
        (C_3 \to S_4 \to S_5 \to \cdots \to S_k) \to (C_3 \to S_4 \to \cdots \to S_{k-2}) \to \cdots
    \end{align*}
    continuing similarly until either bin $2$ or bin $3$ is empty (so we cannot play $C_3$). In general, shorten the contiguous $1$s down two indices and repeat until either bin $2$ or bin $3$ is empty (the bin which ends up empty depends on the parity of $k$).

    \item Game $\mathcal G_2$: This is exactly the same as game $\mathcal G_1$ in Case 1.
\end{itemize}
After these subroutines, it is straightforward to confirm that bin $k$ is empty, bins $k+1, \dots, n-1$ are all $0$ or $1$ with heights agreeing between $\mathcal G_1$ and $\mathcal G_2$, and no playable moves involving bins $1, \dots, k$ exist. Thus, both games proceed strictly over bins $k+1$ to $n-1$ by Lemma \ref{no_back}, and perform the same sequence of moves; we need only compute the difference in the lengths of these two subroutines on this prefix of length $k$.

The length of the sequence of moves in game $\mathcal G_2$ was computed in Case 1, while the length of the sequence of moves in game $\mathcal G_1$ is
 \begin{align*}
    \begin{cases}
        \sum_{j=1}^{(k-1)/2} (k-2j) = \sum_{j=1}^{(k-1)/2} (2j-1) = 2(\sum_{j=1}^{(k-1)/2} j) - \frac{k-1}{2} & k \text{ odd} \\
        \sum_{j=1}^{(k-2)/2} (k-2j) = \sum_{j=1}^{(k-2)/2} (2j-1) = 2(\sum_{j=1}^{(k-2)/2} j) - \frac{k-2}{2} & k \text{ even.}
    \end{cases}
\end{align*}
so the difference is given by 
\begin{align*}
    \begin{cases}
        2 \sum_{j=1}^{(k-1)/2} j - \left[ 2(\sum_{j=1}^{(k-1)/2} j) - \frac{k-1}{2} \right] = \frac{k-1}{2} = \lfloor k/2 \rfloor & k \text{ odd} \\
        \left[ 2 (\sum_{j=1}^{k/2} j) - \frac{k}{2} \right] - \left[ 2(\sum_{j=1}^{(k-2)/2} j) - \frac{k-2}{2} \right] = k-1 & k \text{ even}
        \end{cases}
\end{align*}
so in general, the difference is bounded by $\max\{\lfloor k/2 \rfloor, k-1\} \leq n-1 \leq n$.

\medskip

We observe that in both cases, the difference in the lengths of the second segments of these games is bounded by $n$. We now show that the difference $\ell(n)$ between the lengths of the longest and shortest games on input $F_n-1$ is at least $n$ for all $n \geq 6$. One can confirm, by playing a longest game according to Theorem \ref{longest_game} on input $N = F_6-1 = 12$, that $\ell(6) = 17 - (12-Z(12)) = 17-9 \geq 6$; it is similarly easy to confirm that $\ell(4), \ell(5) \geq 1$. Since we have $F_{n+1}-1 = (F_n-1)+F_{n-1} \geq (F_n-1)+(F_{n-1}-1)$, one can pursue a game on input $F_{n+1}-1$ by first playing a game on input $F_n-1$, then a game on input $F_{n-1}-1$, and finally performing some fixed sequence of moves to completion. By combining respective shortest games and longest games on input $F_n-1$ and input $F_{n-1}-1$, we observe that $\ell(n+1)-\ell(n) \geq 1$ for any $n \geq 6$, and thus $\ell(n) \geq n$ for all $n \geq 6$ (recall $\ell(6) \geq 6$).

\medskip

Therefore, we have that for all $N \geq F_6 = 13$,
\begin{align*}
    R_1 - L_2 & \geq \ell(n) - \left[(\text{len. longest remaining on $N$}) - (\text{len. longest remaining on $N-1$})\right] \\
    & \geq \ell(n) - n \geq \ell(n) - \ell(n) = 0
\end{align*}
from which we conclude that $L_2 \leq R_1$.
\end{proof}

\section{Winning Odds in the Limit $N \to \infty$}

We dedicate this section to establishing Theorem \ref{half_half}, which follows as an immediate corollary of Theorem \ref{many_players}: recall that Theorem \ref{many_players} is given as follows.
\manyPlayers*
\noindent The case $Z=2$ gives Theorem \ref{half_half}.
\halfHalf*

\subsection{Overview}
We provide a brief sketch of the proof of Theorem \ref{half_half}. We specifically focus on the number of $(1,k)$-prefixes which occur over the course of a game, recalling that whenever applicable, the sequence of moves $(S_k, C_{k-1})$ has the same effect on the game as $C_k$. We partition the collection of games $\Omega_N$ into subsets of games which differ only via such $(1,k)$-prefixes, describe the conditional distribution induced by any such subset (under both $\mu_N$ and $\Prob_N$), and argue that in the limit of infinite input, the sizes of the subsets induced by this partition grow so fast that the probability of Player 1 or Player 2 winning can be reduced to the outcome of a binomial random variable with exploding variance. 

\subsection{Partitioning the Collection of Possible Games}
As observed in Definition \ref{prefix_suffix}, for $k \geq 2$, the sequence\footnote{We deviate from notation earlier in the paper and write move sequences as tuples.} $(S_k, C_{k-1})$ is a $(1,k)$-prefix. We establish some notation to use later: define $\mathcal R_N \in \mathcal F_N$ to be the collection\footnote{We elect to use the notation $\mathcal R_N$ as we think of these games on input $N$ as \textit{representatives} of the corresponding classes $\mathcal A_N(\mathcal R)$ that we define later in this discussion.} of all Zeckendorf games on input $N$ such that (for all $k \geq 2$) any instance of the sequence $(S_k, C_{k-1})$ is immediately preceded by $S_{k+1}$; in other words, $\mathcal R_N$ is the collection of all Zeckendorf games on input $N$ such that there are no $(1,k)$-prefixes for any $k \geq 2$. We can express this collection as
\begin{align*}
    \mathcal R_N = \left\{ (M_1,\dots, M_\lambda) \in \Omega_N : (M_i, M_{i+1}) = (S_k, C_{k-1}) \implies M_{i-1} = S_{k+1} \ \forall \ i \in [\lambda-1], k \geq 2 \right\}.
\end{align*}
For a game $\mathcal R = (M_1, M_2, \dots, M_\lambda) \in \mathcal R_N$, construct the subset of indices $\mathcal I_N(\mathcal R)$ to denote all combine moves in $\mathcal R$, not involved in a sequence $(S_k, C_{k-1})$ for $k \geq 2$, for which the latter bin has height at least $2$ (i.e., combine moves $C_k$ for $k \geq 2$ replaceable with a $(1,k)$-prefix):
\begin{align*}
    \mathcal I_N(\mathcal R) = \left\{i \in [\lambda] : M_i = C_k, \ M_{i-1} \neq S_{k+1}, \ h_k(i) \geq 2 \text{ for some } k \geq 2 \right\}.
\end{align*}
Now, construct the \textit{formal} sequence of moves $\mathcal M(\mathcal R) = (\tilde{M}_1, \tilde{M}_2, \dots, \tilde{M}_\lambda)$ by replacing $M_i$ by a symbol $\mathcal E_k$ (the subscript being the corresponding $k \geq 2$) for all $i \in \mathcal I_N(\mathcal R)$; call $\mathcal M(\mathcal R)$ the \textbf{base sequence} of $\mathcal R \in \mathcal R_N$. Let $\mathcal A_N(\mathcal R) \in \mathcal F_N$ denote the collection of all Zeckendorf games resulting from replacing each instance of $\mathcal E_k$ in $\mathcal M$ by either $C_k$ or the $1$-prefix $(S_k, C_{k-1})$ (for every $k \geq 2$). We establish the following important result, which makes it clear why we have pursued this construction in the manner that we did.

\begin{lemma} \label{partition}
The sets $\{\mathcal A_N(\mathcal R) : \mathcal R \in \mathcal R_N\}$ partition $\Omega_N$.
\end{lemma}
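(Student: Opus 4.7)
The plan is to define a canonical compression map $\Omega_N \to \mathcal R_N$ and identify its fibers with the sets $\mathcal A_N(\mathcal R)$. The proof then has three ingredients: (i) well-definedness, namely $\mathcal A_N(\mathcal R) \subseteq \Omega_N$ for every $\mathcal R \in \mathcal R_N$; (ii) covering, namely every game lies in some $\mathcal A_N(\mathcal R)$; and (iii) disjointness of the collection $\{\mathcal A_N(\mathcal R) : \mathcal R \in \mathcal R_N\}$.

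Well-definedness is immediate: for any $\mathcal R \in \mathcal R_N$ and any $i \in \mathcal I_N(\mathcal R)$, the pair $(S_k, C_{k-1})$ has the same effect on the multiset as the single move $C_k$ whenever both are playable, and playability of $S_k$ at position $i$ is guaranteed by the condition $h_k(i) \geq 2$ built into $\mathcal I_N(\mathcal R)$. Thus each of the $2^{|\mathcal I_N(\mathcal R)|}$ choices of $C_k$ versus $(S_k, C_{k-1})$ at the marked positions produces a legal game in $\Omega_N$.

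For covering, given $\mathcal G \in \Omega_N$ I would define its \emph{canonical compression} by simultaneously replacing every consecutive pair $(M_i, M_{i+1}) = (S_k, C_{k-1})$ (with $k \geq 2$) not immediately preceded by $S_{k+1}$ by the single move $C_k$. The key observation is that two such ``bad'' pairs cannot overlap, since the shared position would have to be simultaneously a split and a combine move. Hence the compression acts on a disjoint union of pairs and is unambiguous, and a short case analysis shows it produces no new bad pairs: a newly inserted $C_k$ is a combine move, so it cannot play the role of $S_k$ in a fresh pair, and its predecessor is either an uncompressed predecessor of the original $S_k$ (hence not $S_{k+1}$ by the badness hypothesis) or the $C_{k'}$ arising from compressing an earlier bad pair (hence a combine move, not $S_{k+1}$). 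The resulting sequence $\mathcal R$ thus lies in $\mathcal R_N$; moreover each inserted $C_k$ has $h_k \geq 2$ at its moment of play (inherited from the original $S_k$), so its index belongs to $\mathcal I_N(\mathcal R)$. Undoing the compression at exactly these indices recovers $\mathcal G$, placing $\mathcal G \in \mathcal A_N(\mathcal R)$.

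Disjointness follows once we verify that the canonical compression of any $\mathcal G \in \mathcal A_N(\mathcal R)$ returns $\mathcal R$ itself, uniquely pinning down $\mathcal R$ from $\mathcal G$. The central step is to identify the bad pairs of $\mathcal G$ with exactly the $\mathcal E_k$'s that were expanded to $(S_k, C_{k-1})$. Each expanded $\mathcal E_k$ yields such a pair whose predecessor in $\mathcal G$ is either an individual non-$S_{k+1}$ move of $\mathcal R$ (by the definition of $\mathcal I_N(\mathcal R)$) or the tail $C_{k'-1}$ of an earlier expansion (always a combine move), and hence is not $S_{k+1}$. Conversely, any $(S_k, C_{k-1})$ pair in $\mathcal G$ not arising from an expansion must consist of two consecutive individual moves of $\mathcal R$ (since expansion symbols always contribute a split followed by a combine as a unit), so these moves form a pair $(S_k, C_{k-1})$ in $\mathcal R$ itself; the membership $\mathcal R \in \mathcal R_N$ then forces the $\mathcal R$-predecessor to be $S_{k+1}$, and because split moves are never targets of $\mathcal E_k$ expansions, this $S_{k+1}$ remains the predecessor in $\mathcal G$, so the pair is not bad. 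The main obstacle is precisely this bookkeeping around predecessors in $\mathcal G$ when the preceding base-sequence symbol is itself an $\mathcal E_{k'}$; once we confirm that this predecessor is always either an original $\mathcal R$-move or a combine move $C_{k'-1}$, and therefore never $S_{k+1}$, the equivalence of bad pairs with expansions is clean and uniqueness of $\mathcal R$ follows.
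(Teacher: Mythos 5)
Your proposal is correct and follows essentially the same route as the paper: the same compression map (replace each pair $(S_k, C_{k-1})$ not immediately preceded by $S_{k+1}$ with $C_k$) establishes covering, and the same predecessor bookkeeping underlies disjointness. The only difference is organizational — you derive disjointness by showing compression is a canonical left inverse of expansion, so the sets $\mathcal A_N(\mathcal R)$ are fibers of a well-defined map, whereas the paper argues disjointness directly by locating the first index at which two distinct representatives differ; both hinge on the same observation that the move preceding an expanded pair in $\mathcal G$ is either an original non-$S_{k+1}$ move of $\mathcal R$ or a combine move, hence never $S_{k+1}$.
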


\begin{proof}
We first show that the sets $\mathcal A_N(\mathcal R)$ are disjoint. Take distinct $\mathcal R_1 = (M_1^1,\dots, M_\lambda^1), \mathcal R_2 = (M_1^2,\dots, M_{\lambda'}^2) \in \mathcal R_N$ with base sequences $\mathcal M_1(\mathcal R_1) = (\tilde{M}_1^1, \tilde{M}_2^1, \dots, \tilde{M}_\lambda^1)$ and $\mathcal M_2(\mathcal R_2) = (\tilde{M}_1^2, \tilde{M}_2^2, \dots, \tilde{M}_{\lambda'}^2)$, respectively. Since $\mathcal R_1 \neq \mathcal R_2$, take smallest $i$ for which $M_i^1 \neq M_i^2 \implies \tilde{M}_i^1 \neq \tilde{M}_i^2$, and study the construction of any two games $\mathcal G_1 \in \mathcal A_N(\mathcal R_1)$ and $\mathcal G_2 \in \mathcal A_N(\mathcal R_2)$ as described above. If $\mathcal G_1$ and $\mathcal G_2$ are consistent prior to $\tilde{M}_i^1$ and $\tilde{M}_i^2$ (i.e., for $k \geq 2$, all $\mathcal E_k$ are replaced by the same choice of $C_k$ or $(S_k, C_{k-1})$), which is certainly the only way the two games remain equal up to this point of their construction, then we necessarily produce a difference in the two games on $\tilde{M}_i^1$ and $\tilde{M}_i^2$ if $M_i^1 = \tilde{M}_i^1$ and $M_i^2 = \tilde{M}_i^2$ ($M_i^1 \neq M_i^2$) or $M_i^1 \neq \tilde{M}_i^1$ and $M_i^2 \neq \tilde{M}_i^2$ ($\tilde{M}_i^1 = \mathcal E_{k_1}$, $\tilde{M}_i^2 = \mathcal E_{k_2}$, $k_1 \neq k_2$). If $M_i^1 = \tilde{M}_i^1$ and $M_i^2 \neq \tilde{M}_i^2$ (say $\tilde{M}_i^2 = \mathcal E_k$, so necessarily $M_i^2 = C_k$ and $M_{i-1}^2 \neq S_{k+1}$ for some $k \geq 2$), then game $\mathcal G_2$ is next filled with either $C_k \neq M_i^1$ ($M_i^1 = M_i^2$ if $C_k = M_i^1$) or $(S_k, C_{k-1})$. In the latter case, if the two games were equal after this, then necessarily $(M_{i-1}^1, M_i^1, M_{i+1}^1) = (S_{k+1}, S_k, C_{k-1})$ (by definition of $\mathcal I_N(\mathcal R_1)$ and $\mathcal R_N$, $M_{i+1}^1$ and $M_{i-1}^1$ follow after establishing $M_i^1 = S_k$), contradicting $M_{i-1}^2 \neq S_{k+1}$ (since $M_{i-1}^1 = M_{i-1}^2$ and $M_i^2 = C_k$, $\tilde{M}_i^2 = \mathcal E_k$). 

We therefore conclude that $\mathcal A_N(\mathcal R_1) \cap \mathcal A_N(\mathcal R_2) = \emptyset$, i.e., the sets $\mathcal A_N(\mathcal R)$ for $\mathcal R \in \mathcal R_N$ are disjoint; it remains to show that any game $\mathcal G \in \Omega_N$ is in some set $\mathcal A_N(\mathcal R)$. For $\mathcal G \in \Omega_N$, let $\mathcal R$ be the game resulting from replacing every instance of the sequence $(M, S_k, C_{k-1})$, $M \neq S_{k+1}$ in game $\mathcal G$ by the sequence $(M, C_k)$ (for $k \geq 2$). The resulting game $\mathcal R$ is such that $(M_i, M_{i+1}) = (S_k, C_{k-1}) \implies M_{i-1} = S_{k+1}$ for all $i \in [\lambda]$ and $k \geq 2$: any sequence $(M_{i-1}, M_i, M_{i+1}) = (M, S_k, C_{k-1})$ with $M \neq S_{k+1}$ in $\mathcal R$ necessarily results from having replaced $(S_{k-1}, C_{k-2})$ for $C_{k-1}$ in game $\mathcal G$ (as $(S_k, C_{k-1})$ would have been replaced by $S_{k+1}$ otherwise), but we know this does not occur by the description above, so we indeed have $\mathcal R \in \mathcal R_N$. Also, $\mathcal G \in \mathcal A_N(\mathcal R)$: we can reverse all the replacements $(S_k, C_{k-1}) \leftrightarrow C_k$ made in achieving $\mathcal R$ from $\mathcal G$, since the resulting $C_k$ moves correspond to $\mathcal E_k$ (for some $k \geq 2$) in the base sequence $\mathcal M(\mathcal R)$ as the preceding move is not $S_{k+1}$.
\end{proof}

Therefore, applying Lemma \ref{partition} and the law of total probability,
\begin{align} \label{eq:total_prob_sum}
    & \mu_N(\text{Game length is } z \text{ mod } Z) = \sum_{\mathcal R \in \mathcal 
    R_N} \mu_N\left(\text{Game length is } z \text{ mod } Z \ | \ \mathcal A_N(\mathcal R)\right) \cdot \mu_N\left(\mathcal A_N(\mathcal R)\right) \nonumber \\
    & \Prob_N(\text{Game length is } z \text{ mod } Z) = \sum_{\mathcal R \in \mathcal 
    R_N} \Prob_N\left(\text{Game length is } z \text{ mod } Z \ | \ \mathcal A_N(\mathcal R)\right) \cdot \Prob_N\left(\mathcal A_N(\mathcal R)\right)
\end{align}
so we can reduce proving Theorem \ref{many_players} to establishing that the conditional probabilities for $\mathcal R \in \mathcal R_N$, with respect to both measures, overwhelmingly tend to $1/2$ in the limit.

\subsection{Analysis}
Define the random variable $m_N: \Omega_N \to \mathbb N$ first on $\mathcal R \in \mathcal R_N$ by $m_N(\mathcal R) = |\mathcal I_N(\mathcal R)|$, denoting the number of terms $\mathcal E_k$ (for $k \geq 2$) in the base sequence of $\mathcal R$, then lift to arbitrary $\mathcal G \in \Omega_N$ by letting $m_N(\mathcal G) = m_N(\mathcal R)$ for the unique $\mathcal R \in \mathcal R_N$ such that $\mathcal G \in \mathcal A_N(\mathcal R)$ (see Lemma \ref{partition}).\footnote{In particular, $m_N(\mathcal R) = \log_2(\mathcal A_N(\mathcal R))$.} Fix $\mathcal R \in \mathcal R_N$, and observe that on the event $\mathcal A_N(\mathcal R)$, $m_N(\mathcal G)$ for $\mathcal G \in \mathcal A_N(\mathcal R)$ is the fixed constant $|\mathcal I_N(\mathcal R)|$.  Now construct corresponding Bernoulli random variables $X_1^{\mathcal R}, X_2^{\mathcal R}, \dots, X_{| m_N(\mathcal R)|}^{\mathcal R}: \Omega_N \to \{0,1\}$ for each of the instances of terms of form $\mathcal E_k$ (for $k \geq 2$) in the base sequence of $\mathcal R$: here, $X_i^{\mathcal R}(\mathcal G) = 1$ if and only if $\mathcal G \in \mathcal A_N(\mathcal R)$ and $\mathcal G$ is achieved by the $i$th instance of $\mathcal E_k$ in the base sequence $\mathcal R$ being the $(1,k)$-prefix $(S_k, C_{k-1})$. Say we replaced this $\mathcal E_k$ with $(S_k, C_{k-1})$, and let $n_i$ denote the number of playable moves in the game available in the turn immediately after playing $S_k$. We can say more about the random variables $X_i^{\mathcal R}$ defined here.

\begin{lemma} \label{independence}
Fix some $\mathcal R \in \mathcal R_N$, and define the random variables $X_1^{\mathcal R}, X_2^{\mathcal R}, \dots, X_{|m_N(\mathcal R)|}^{\mathcal R}$ as above. When conditioned on $\mathcal A_N(\mathcal R)$, the random variables $X_1^{\mathcal R}, X_2^{\mathcal R}, \dots, X_{|m_N(\mathcal R)|}^{\mathcal R}$ are independent Bernoulli random variables with variable $X_i^{\mathcal R}$ having parameter $1/2$ under the measure $\mu_N$, and parameter $p_i = \frac{1}{1+n_i}$ under the measure $\Prob_N$. Explicitly,
\begin{align*}
    & \mu_N\left(X_i^\mathcal R = 1 \ | \ \mathcal A_N(\mathcal R) \right) = \frac{1}{2},
    & \Prob_N\left(X_i^\mathcal R = 1 \ | \ \mathcal A_N(\mathcal R) \right) = \frac{1}{1+n_i}.
\end{align*}

\begin{proof}
Fix a random variable $X_i^{\mathcal R}$, and observe that for every particular setting of all other $\mathcal E_k$ terms in $\mathcal R$, there exist exactly two games in $\mathcal A_N(\mathcal R)$ faithful to this setting, corresponding to choosing $C_k$ and the $(1,k)$-prefix $(S_k, C_{k-1})$ for the $i$th such $\mathcal E_k$. It follows immediately that under the uniform measure $\mu_N$, we indeed have $\mu_N\left(X_i^\mathcal R = 1 \ | \ \mathcal A_N(\mathcal R) \right) = \frac{1}{2}$, since in particular there exists a bijection between games in the subset $\mathcal A_N(\mathcal R)$ with $X_i^\mathcal R = 0$ and $X_i^\mathcal R = 1$ and all games in $\Omega_N$ are given equal probability under the measure $\mu_N$.

Under the probability measure $\Prob_N$, it is straightforward to observe that the game replacing the $i$th instance of $\mathcal E_k$ by the $(1,k)$-prefix $(S_k, C_{k-1})$ requires an additional decision with probability $\frac{1}{n_i}$ of yielding the desired $C_{k-1}$, and it thus follows that the parameter $p_i$ of the Bernoulli random variable $X_i^{\mathcal R}$ is given by
\begin{align*}
    \frac{p_i}{1-p_i} = \frac{1}{n_i} \implies p_i = \frac{1}{1+n_i}.
\end{align*}
To establish independence, it suffices to show that for any subset $S \subseteq \left[m_N(\mathcal R)\right]$, we have the identity
\begin{align*}
    \Prob_N\left(X_i^{\mathcal R} = 1 \text{ iff } i \in S \ | \ \mathcal A_N(\mathcal R) \right) & = \prod_{i\in S} \Prob_N\left( X_i^{\mathcal R} = 1 \ | \ \mathcal A_N(\mathcal R) \right) \cdot \prod_{j\notin S} \Prob_N\left( X_j^{\mathcal R} = 0 \ | \ \mathcal A_N(\mathcal R) \right) \\
    & = \prod_{i \in S} p_i \cdot \prod_{j \notin S} (1-p_j).
\end{align*}
We can relate the conditional probabilities $\left\{\Prob_N\left(X_i^{\mathcal R} = 1 \text{ iff } i \in S \ \ \mathcal A_N(\mathcal R) \right): S \in \left[m_N(\mathcal R)\right] \right\}$ whenever $S_1 = S_2 \cup \{j\}$ (for $j \in \left[m_N(\mathcal R)\right]$): all choices for each term $\mathcal E_k$ but one are consistent (namely, $X_j^{\mathcal R} = 1$ for the numerator in the following), and thus
\begin{align*}
    \frac{\Prob_N\left(X_i^{\mathcal R} = 1 \text{ iff } i \in S_1 \ | \ \mathcal A_N(\mathcal R) \right)}{\Prob_N\left(X_i^{\mathcal R} = 1 \text{ iff } i \in S_2 \ | \ \mathcal A_N(\mathcal R) \right)} = \frac{1}{n_j} = \frac{p_j}{1-p_j} = \frac{\prod_{i \in S_1} p_i \cdot \prod_{j \notin S_1} (1-p_j)}{\prod_{i \in S_2} p_i \cdot \prod_{j \notin S_2} (1-p_j)}.
\end{align*}
Now, since we have the identity
\begin{align*}
    \sum_{S \subseteq \left[m_N(\mathcal R)\right]} \Prob_N\left(X_i^{\mathcal R} = 1 \text{ iff } i \in S \ | \ \mathcal A_N(\mathcal R) \right) & = 1 = \prod_{i=1}^{m_N(\mathcal R)} \left( p_i + (1-p_i) \right) \\
    & = \sum_{S \subseteq \left[m_N(\mathcal R)\right]} \left( \prod_{i \in S} p_i \cdot \prod_{j \notin S} (1-p_j) \right)
\end{align*}
and quotients between summands corresponding to two sets differing by one element are the same, the summands on both sides for any subset $S \subseteq \left[m_N(\mathcal R)\right]$ are necessarily equal. More specifically, letting $p_1 = \Prob_N\left(X_i^{\mathcal R} = 1 \text{ iff } i \in \emptyset \ | \ \mathcal A_N(\mathcal R) \right)$ and $p_2 = \prod_{i \in \emptyset} p_i \cdot \prod_{j \notin \emptyset} (1-p_j)$, by incrementally including elements to some $S \subseteq \left[ m_N(\mathcal R) \right]$ we can write the corresponding summands on the left and right hand sides as the same multiple of $p_1$ and $p_2$, respectively. This reduces to $p_1 = p_2$, and thus summands corresponding to the same $S$ are equal. Thus, we have the desired identity for any subset $S \subseteq \left[m_N(\mathcal R)\right]$.
\end{proof}

\end{lemma}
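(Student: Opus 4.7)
The plan is to treat the two measures separately. For $\mu_N$, the cardinality $|\mathcal A_N(\mathcal R)| = 2^{m_N(\mathcal R)}$ is immediate from the construction: each of the $m_N(\mathcal R)$ slots $\mathcal E_k$ in the base sequence $\mathcal M(\mathcal R)$ is independently resolved as either $C_k$ or $(S_k, C_{k-1})$, giving a bijection $\mathcal A_N(\mathcal R) \leftrightarrow \{0,1\}^{m_N(\mathcal R)}$. Since $\mu_N$ weights every game in $\Omega_N$ equally, the conditional measure on $\mathcal A_N(\mathcal R)$ is uniform on this product set, so the $X_i^{\mathcal R}$ are immediately seen to be i.i.d.\ Bernoulli$(1/2)$ under this conditioning.

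The substance lies in the $\Prob_N$ case. I would fix two games $\mathcal G, \mathcal G' \in \mathcal A_N(\mathcal R)$ that agree in every $\mathcal E_k$-slot except the $i$-th, with $\mathcal G$ resolving it as $C_k$ and $\mathcal G'$ resolving it as $(S_k, C_{k-1})$. The key observation is that $C_k$ and $(S_k, C_{k-1})$ have the same net effect on the multiset, so the board configurations of $\mathcal G$ and $\mathcal G'$ agree at every turn outside of the single extra move $S_k$. In particular, at each such turn the set of playable moves (and hence the probability of the realized move) is identical in $\mathcal G$ and $\mathcal G'$, and these factors cancel in the ratio $\Prob_N(\mathcal G')/\Prob_N(\mathcal G)$. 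Within the window, letting $N_i$ denote the number of playable moves at the turn where the choice is made, the $C_k$-branch contributes a probability $1/N_i$ while the $(S_k,C_{k-1})$-branch contributes $(1/N_i)(1/n_i)$, so that $\Prob_N(\mathcal G')/\Prob_N(\mathcal G) = 1/n_i$.

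Iterating this single-swap comparison through all $m_N(\mathcal R)$ slots and fixing a reference game $\mathcal G_0 \in \mathcal A_N(\mathcal R)$ in which every $\mathcal E_k$ is resolved as $C_k$, I obtain for every $\mathcal G \in \mathcal A_N(\mathcal R)$ the identity
\begin{align*}
\Prob_N(\mathcal G) \;=\; \Prob_N(\mathcal G_0) \cdot \prod_{i=1}^{m_N(\mathcal R)} n_i^{-X_i^{\mathcal R}(\mathcal G)}.
\end{align*}
Summing over $\mathcal G \in \mathcal A_N(\mathcal R)$ and using that the sum factors as $\prod_i (1 + 1/n_i)$, I obtain $\Prob_N(\mathcal A_N(\mathcal R)) = \Prob_N(\mathcal G_0) \prod_i (1 + 1/n_i)$. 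Dividing through, and substituting $p_i = 1/(1+n_i)$ and $1-p_i = n_i/(1+n_i)$, the conditional probability $\Prob_N(\mathcal G \mid \mathcal A_N(\mathcal R))$ collapses into the product form $\prod_i p_i^{X_i^{\mathcal R}(\mathcal G)} (1-p_i)^{1-X_i^{\mathcal R}(\mathcal G)}$, which is exactly the joint law of independent Bernoullis with the stated parameters.

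The main subtlety I expect to guard against is that the single-swap comparison only legitimately cancels the non-window probabilities because the board states truly coincide after the window. I would make this watertight with a short induction on turn index showing that if $\mathcal G$ and $\mathcal G'$ share a base representative in $\mathcal R_N$ and differ in exactly the $i$-th $\mathcal E_k$-slot, then their board configurations agree at every turn strictly after the window. This simultaneously guarantees that all $n_j$ for $j \neq i$ are unchanged between $\mathcal G$ and $\mathcal G'$, so that the iterated swap is well-posed and the $n_i$ entering $p_i$ depends only on $\mathcal R$, not on the particular game $\mathcal G \in \mathcal A_N(\mathcal R)$ being analyzed.
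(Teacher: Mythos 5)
Your proposal is correct and takes essentially the same route as the paper: the $\mu_N$ case is the identical bijection argument, and for $\Prob_N$ both proofs hinge on the single-swap identity $\Prob_N(\mathcal G')/\Prob_N(\mathcal G) = 1/n_i$ for two games in $\mathcal A_N(\mathcal R)$ differing only in the $i$th slot. Your explicit product formula relative to the all-combine reference game $\mathcal G_0$, followed by summing and normalizing, is simply a cleaner packaging of the paper's closing step (which instead observes that both the true conditional probabilities and the claimed product weights sum to one and have identical ratios across sets differing in one element, hence agree termwise), and your insistence on verifying that the board states --- and therefore the non-window factors and the values $n_j$ themselves --- coincide outside the swapped window makes rigorous a point the paper dismisses as ``straightforward to observe.''
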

\noindent Lemma \ref{independence} yields the following easy observation.
\begin{corollary} \label{num_bins}
Say $F_n \leq N < F_{n+1}$, fix some $\mathcal R \in \mathcal R_N$, and define the random variables $X_1^{\mathcal R}, X_2^{\mathcal R}, \dots, X_{m_N(\mathcal R)}^{\mathcal R}$ as above. Under the uniform measure $\mu_N$, whenever $\mathcal R \in \mathcal R_N$ is such that $m_N(\mathcal R) > 0$, $\mu_N\left(\text{Player 1 wins} \ | \ \mathcal A_N(\mathcal R) \right) = 1/2$. Under the probability measure $\Prob_N$, when conditioned on the event $\mathcal A_N(\mathcal R)$, there are at most $2n$ distinct values of the parameters $p_i = \frac{1}{1+n_i}$ amongst the random variables $X_i^{\mathcal R}$, and for any such $i$, $\frac{1}{2n-2} \leq p_i \leq 1/2$.
\end{corollary}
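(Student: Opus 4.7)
The plan is to handle the two measure statements separately, with each following quickly from Lemma \ref{independence} together with a short bookkeeping argument on the set of playable moves.

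For the uniform measure, I would first observe that on the event $\mathcal A_N(\mathcal R)$ the total game length equals a fixed base $\lambda$ (the length of $\mathcal R$) plus $\sum_{i=1}^{m_N(\mathcal R)} X_i^{\mathcal R}$: each $(1,k)$-prefix contributes exactly one extra move beyond the combine $C_k$ it replaces. Since the last player to move wins, Player $1$ wins if and only if this length is odd, i.e., iff $\sum_i X_i^{\mathcal R}$ has a particular parity determined by $\lambda$. When $m_N(\mathcal R) \geq 1$, the parity of $\sum_i X_i^{\mathcal R}$ is uniform on $\{0,1\}$: conditioning on the values of $X_2^{\mathcal R}, \dots, X_{m_N(\mathcal R)}^{\mathcal R}$ reduces the parity of the sum to that of $X_1^{\mathcal R}$, which under $\mu_N$ is Bernoulli$(1/2)$ by Lemma \ref{independence}.

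For the probability-measure bounds, I would focus on $n_i$, the number of playable moves in the position obtained immediately after the $S_k$ step of the $(1,k)$-prefix replacing the $i$-th $\mathcal E_k$-term. The inequality $p_i \leq 1/2$ is simply $n_i \geq 1$: the move $C_{k-1}$ is playable in this position, since $S_k$ deposits an $F_{k-2}$ token and leaves untouched the $F_{k-1}$ that must have been present to have made the original $C_k$ legal. For the lower bound $p_i \geq 1/(2n-2)$, I would show that in any state reachable during a Zeckendorf game on input $N$ with $F_n \leq N < F_{n+1}$, the total number of playable moves is at most $2n-3$. The key observation is that $C_j$ and $S_j$ for $j \geq n$ are permanently unplayable: each requires one or two tokens whose combined weight is at least $F_{n+1} > N$ (using $F_{n-1}+F_n = F_{n+1}$ and $2F_n = F_{n+1} + F_{n-2} > F_{n+1}$), contradicting conservation of the total sum $N$. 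Thus the playable moves always lie in $\{C_1, \dots, C_{n-1}\} \cup \{S_2, \dots, S_{n-1}\}$, forcing $n_i \in \{1,2,\dots,2n-3\}$, and in turn $p_i \geq 1/(2n-2)$ with at most $2n-3 \leq 2n$ distinct values realized among the $p_i$.

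I do not anticipate any substantial obstacle: the entire argument reduces to (i) the one-line parity observation above, and (ii) the weight-conservation argument ruling out $C_j, S_j$ for $j \geq n$, both of which use only elementary Fibonacci identities and the invariance of the total sum under Zeckendorf-game moves.
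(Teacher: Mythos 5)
Your proposal is correct and follows essentially the same route as the paper: the uniform-measure claim via conditioning on $X_2^{\mathcal R},\dots,X_{m_N(\mathcal R)}^{\mathcal R}$ and the Bernoulli$(1/2)$ fact from Lemma \ref{independence}, and the bounds on $p_i$ via the observation that only the $2n-3$ moves $C_1,\dots,C_{n-1},S_2,\dots,S_{n-1}$ are ever playable, so $n_i \in [2n-3]$. Your explicit verification that $C_{k-1}$ is playable after $S_k$ (giving $n_i \geq 1$) is a small detail the paper leaves implicit, but it does not change the argument.
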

\begin{proof}
If $m_N(\mathcal R) > 0$, then the statement $\mu_N\left(\text{Player 1 wins} \ | \ \mathcal A_N(\mathcal R) \right) = 1/2$ follows immediately by applying the law of total probability on all settings of the Bernoulli random variables $(X_2^{\mathcal R}, \dots, X_{m_N(\mathcal R)}^\mathcal R) \in \{0,1\}^{m_N(\mathcal R)-1}$, namely since this further conditioning always yields a conditional probability of $1/2$ (see the proof of Lemma \ref{independence}). At any point of a game $\mathcal G \in \Omega_N$  satisfying $F_n \leq N < F_{n+1}$, there are at most $2n-3$ (and thus certainly at most $2n$) playable moves (and thus at most $2n-3$ distinct values of $n_i$, and thus $p_i$): the combine moves $C_1, C_2, \dots, C_{n-1}$ and the splitting moves $S_2, S_3, \dots, S_{n-1}$. In particular, since $n_i \in [2n-3]$ for all $i \in \left[ m_N(\mathcal R) \right]$, it follows that $\frac{1}{1+(2n-3)} \leq p_i \leq \frac{1}{1+1}$, i.e., $\frac{1}{2n-2} \leq p_i \leq \frac{1}{2}$.
\end{proof}
We will also make use of the following lemma.
\begin{lemma} \label{many_decisions_exp}
Say $F_n \leq N < F_{n+1}$.\footnote{It is perhaps more appropriate to think of $n$ as a function $n(N)$.} For any $c \in (0,\varphi)$ (with $\varphi$ denoting the golden ratio),
\begin{align*}
    & \lim_{N \to \infty} \mu_N \left( m_N(\mathcal G) \geq c^n \right) = 1,
    & \lim_{N \to \infty} \Prob_N \left( m_N(\mathcal G) \geq c^n \right) = 1.
\end{align*}
\end{lemma}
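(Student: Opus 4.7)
The plan is to exploit the partition $\{\mathcal A_N(\mathcal R) : \mathcal R \in \mathcal R_N\}$ of $\Omega_N$ together with the identity $|\mathcal A_N(\mathcal R)| = 2^{m_N(\mathcal R)}$ noted in the paper. Under the uniform measure $\mu_N$, this yields
\[
\mu_N\bigl(m_N(\mathcal G) < c^n\bigr) \;=\; \frac{1}{|\Omega_N|}\sum_{\mathcal R \in \mathcal R_N,\ m_N(\mathcal R) < c^n} 2^{m_N(\mathcal R)},
\]
and the task reduces to showing that this sum is $o(|\Omega_N|)$ as $N \to \infty$.

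For the denominator, I would apply Theorem \ref{lower_bound} together with Stirling's approximation: $|\Omega_N| \geq \prod_{k=1}^{n-2}\text{Cat}(F_k) \geq 2^{2F_n - O(\log n)}$, so $\log_2 |\Omega_N| = \Omega(\varphi^n)$. For the numerator, the crude bound
\[
\sum_{\mathcal R : m_N(\mathcal R) < c^n} 2^{m_N(\mathcal R)} \;\leq\; \bigl|\{\mathcal R \in \mathcal R_N : m_N(\mathcal R) < c^n\}\bigr| \cdot 2^{c^n}
\]
means it would suffice to bound the number of bad representatives by $2^{o(\varphi^n)}$, leaving substantial slack against the $2^{c^n}$ factor since $c < \varphi$. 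Establishing this enumeration bound is the principal obstacle. My plan for it is structural: any $\mathcal R$ with $m_N(\mathcal R)$ small must clear bins at indices $\geq 2$ almost immediately after they acquire a second token (otherwise the subsequent combine $C_k$ would be counted in $\mathcal I_N(\mathcal R)$), forcing the move sequence to obey a rigid local pattern determined by few free choices. Encoding each bad $\mathcal R$ via its constrained profile of bin multiplicities should give the desired subexponential-in-$\varphi^n$ count.

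For the measure $\Prob_N$, I would leverage Lemma \ref{independence}: the conditional $\Prob_N$-distribution on a class $\mathcal A_N(\mathcal R)$ is a product of independent Bernoullis with parameters $p_i \in [1/(2n-2), 1/2]$. In particular, $\Prob_N$-weights across a fixed class differ from uniform weights by a factor of at most $(2n-2)^{m_N(\mathcal R)}$. Restricting to bad classes with $m_N(\mathcal R) < c^n$, this contributes an additional multiplicative factor of $(2n-2)^{c^n} = 2^{c^n \cdot O(\log n)}$, which is still $2^{o(\varphi^n)}$ and is harmlessly absorbed into the $\mu_N$ estimate, yielding the analogous statement under $\Prob_N$. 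The hardest step in the entire argument remains the combinatorial count of bad representatives, where the rigidity observation above must be turned into a quantitative encoding.
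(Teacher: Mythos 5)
Your approach is genuinely different from the paper's, but it has two serious gaps. The paper avoids counting representatives altogether: it observes that every occurrence of the move pattern $(C_1, C_1, C_2)$ in a game $\mathcal G$ contributes an element to $\mathcal I_N(\mathcal R)$ for the representative $\mathcal R$ with $\mathcal G \in \mathcal A_N(\mathcal R)$, so $m_N(\mathcal G)$ is at least the number of such occurrences; it then splits the first $\lfloor N/6\rfloor$ moves into disjoint triples, notes each triple is the pattern with probability at least $1/(8n^3)$ (since there are at most $2n$ available moves at any position and $h_1$ stays large), and finishes with a Chernoff bound against $\mathrm{Bin}(\lfloor N/6\rfloor, 1/(8n^3))$, whose mean is $\Theta(N/n^3) \gg c^n$ for $c < \varphi$. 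This sidesteps both of the difficulties in your plan.

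The first gap in your argument is the one you flag yourself: the bound on the number of representatives $\mathcal R$ with $m_N(\mathcal R) < c^n$ is the entire content of the $\mu_N$ statement, and the ``rigidity'' heuristic is not a proof. Note that every shortest game (all combine moves) lies in $\mathcal R_N$ vacuously, and Theorem \ref{lower_bound} shows there are at least $2^{2F_n - O(n^2)}$ of them, which is already the same order as your lower bound on $|\Omega_N|$; so you cannot afford to lose essentially anything in the exponent when counting bad representatives, and it is far from clear how the encoding you gesture at achieves this. The second gap is the transfer to $\Prob_N$. Lemma \ref{independence} controls the ratio of weights of two games \emph{within the same class} $\mathcal A_N(\mathcal R)$ under the conditional distribution; it says nothing about the unconditional masses $\Prob_N(\mathcal A_N(\mathcal R))$ relative to $\mu_N(\mathcal A_N(\mathcal R))$. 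The quantity you need to bound is $\sum_{\mathcal R\,:\,m_N(\mathcal R)<c^n}\Prob_N(\mathcal A_N(\mathcal R))$, and the ratio $\Prob_N(\mathcal G)/\mu_N(\mathcal G)$ for an individual game can be exponentially large or small in $N$ (game probabilities under $\Prob_N$ are products of $\Theta(N)$ factors each between $1/(2n)$ and $1$, while $1/|\Omega_N|$ is a fixed exponentially small number), so a multiplicative correction of $(2n-2)^{c^n}$ cannot convert the $\mu_N$ estimate into a $\Prob_N$ estimate. As written, the $\Prob_N$ half of the lemma is unproven even granting the counting step.
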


\begin{proof}
Fix $c \in (0, \varphi)$, and consider a game $\mathcal G \in \Omega_N$: for the representative $\mathcal R \in \mathcal R_N$ such that $\mathcal G \in \mathcal A_N(\mathcal R)$, every occurrence of the sequence $(C_1, C_1, C_2)$ corresponds to an element of $\mathcal I_N(\mathcal R)$ (specifically the move $C_2$, as it is not preceded by $S_3$, and the latter two moves are maintained in $\mathcal R$; see the proof of Lemma \ref{partition}), i.e., $m_N(\mathcal G) = |\mathcal I_N(\mathcal R)|$ is at least the number of occurrences of the sequence $(C_1, C_1, C_2)$ in $\mathcal G$. Letting $\mathcal N(\mathcal G)$ denote the number of occurrences of the sequence $(C_1, C_1, C_2)$ in the game $\mathcal G$, it thus suffices to show that
\begin{align*}
    & \lim_{N \to \infty} \mu_N \left( \mathcal N(\mathcal G) \geq c^n \right) = 1,
    & \lim_{N \to \infty} \Prob_N \left( \mathcal N(\mathcal G) \geq c^n \right) = 1.
\end{align*}
Proceed studying the probability measure $\Prob_N$; the analysis carries over exactly\footnote{Certainly, the analysis is much looser than necessary when $\Prob_N$ is replaced with $\mu_N$.} when $\Prob_N$ is replaced with $\mu_N$. By Corollary \ref{num_bins}, at any move where $h_1 \geq 5$, the probability of achieving the sequence $(C_1, C_1, C_2)$ is at least $\left( \frac{1}{2n} \right)^3 = \frac{1}{8n^3}$, and any sequence of three moves can decrease the height of bin $1$ by at most $6$ (via three consecutive $C_1$ moves); observe that the occurrence of $C_2$ in a sequence $(C_1, C_1, C_2)$ within a game $\mathcal G$ necessarily corresponds to an instance of the delimiter $\mathcal E_2$ in the base sequence of $\mathcal G$ (e.g. see the proof of Lemma \ref{partition}). Thus, study the sequences of moves in game $\mathcal G$ (which necessarily exist by the preceding discussion) given by the triples
\begin{align*}
    (M_1, M_2, M_3), (M_4, M_5, M_6), \dots, (M_{3\lfloor N/6 \rfloor-2}, M_{3\lfloor N/6 \rfloor-1}, M_{3\lfloor N/6 \rfloor})
\end{align*}
each of which independently takes on the value $(C_1, C_1, C_2)$ with probability at least $\frac{1}{8n^3}$. Thus, letting $Y_N \stackrel{d}{=} \text{Bin}\left(\lfloor N/6 \rfloor, p = \frac{1}{8n^3} \right)$, we have that
\begin{align*}
    \Prob \left( Y_N \geq c^n \right) \leq \Prob_N \left( \mathcal N(\mathcal G) \geq c^n \right)
\end{align*}
from which $\E[Y_N] = \frac{\lfloor N/6 \rfloor}{8n^3}$ together with a Chernoff bound with $\delta = \frac{1}{2}$ yields for $N$ large,
\begin{align*}
    \Prob \left( Y_N \leq c^n \right) \leq \Prob \left( Y_N \leq \frac{\lfloor N/6 \rfloor}{16n^3} \right) \leq \exp\left( -\frac{\lfloor N/6 \rfloor}{32n^3} \right) \xrightarrow{N \to \infty} 0
\end{align*}
since it is straightforward to verify that $c^n \leq \frac{\lfloor N/6 \rfloor}{16n^3}$ for large $N$ (e.g. use Binet).
\end{proof}

\begin{corollary} \label{many_decisions}
Lemma \ref{many_decisions_exp} immediately yields that for any $d \in \N$,
\begin{align*}
    & \lim_{N \to \infty} \mu_N \left( m_N(\mathcal G) \geq n^d \right) = 1,
    & \lim_{N \to \infty} \Prob_N \left( m_N(\mathcal G) \geq n^d \right) = 1
\end{align*}
i.e., with probability approaching $1$ as $N \to \infty$, the number of decisions made to achieve $\mathcal G$ from the set $\mathcal A_N(\mathcal R)$ containing it is superpolynomial in $n$. This is more convenient for later.
\end{corollary}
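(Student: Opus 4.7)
The plan is to derive Corollary \ref{many_decisions} as an essentially immediate consequence of Lemma \ref{many_decisions_exp}, using only that any exponential with base greater than $1$ eventually dominates any polynomial. First, I would observe that the golden ratio satisfies $\varphi > 1$, so the interval $(1,\varphi)$ lies within the admissible range $(0,\varphi)$ in Lemma \ref{many_decisions_exp}. Fix any $c \in (1,\varphi)$, for instance $c = (1+\varphi)/2$.

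Next, for any fixed $d \in \N$, standard growth comparisons give a threshold $n_0 = n_0(c,d)$ such that $c^n \geq n^d$ for every $n \geq n_0$. Because the hypothesis $F_n \leq N < F_{n+1}$ forces $n = n(N) \to \infty$ as $N \to \infty$ (for example by Binet's formula), there is a corresponding $N_0$ such that for all $N \geq N_0$ the event $\{ m_N(\mathcal G) \geq c^n\}$ is contained in the event $\{m_N(\mathcal G) \geq n^d\}$. Monotonicity of probability under either measure then yields
\[
    \mu_N\bigl(m_N(\mathcal G) \geq n^d\bigr) \ \geq \ \mu_N\bigl(m_N(\mathcal G) \geq c^n\bigr), \qquad \Prob_N\bigl(m_N(\mathcal G) \geq n^d\bigr) \ \geq \ \Prob_N\bigl(m_N(\mathcal G) \geq c^n\bigr),
\]
for all $N \geq N_0$.

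Finally, I would take $N \to \infty$ on both sides and invoke Lemma \ref{many_decisions_exp}, which asserts that the right-hand sides both tend to $1$. Since the left-hand sides are bounded above by $1$, a sandwich argument gives the claimed limits for both $\mu_N$ and $\Prob_N$ simultaneously. There is no genuine obstacle here; the only subtlety worth recording explicitly is that $\varphi > 1$ is what allows the choice $c > 1$, which in turn is what lets us transfer an exponential lower bound into the polynomial lower bound required by the corollary.
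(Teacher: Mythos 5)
Your proposal is correct and is exactly the argument the paper intends: the corollary is stated as an immediate consequence of Lemma \ref{many_decisions_exp}, obtained by choosing $c \in (1,\varphi)$ so that $c^{n} \geq n^{d}$ for large $n$ and using monotonicity of the two measures. No further comment is needed.
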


\noindent The proof of Lemma \ref{many_decisions_exp} yields the following observations on the likeliest collection $\mathcal A_N(\mathcal R)$ and the number of representative games, which will be referenced again in Section \ref{mixture_gaussians}.
\begin{corollary} \label{cor:likeliest_rep}
The probability of the likeliest collection $\mathcal A_N(\mathcal R)$ vanishes as $N \to \infty$, i.e.,
\begin{align*}
    \lim_{N \to \infty} \max_{\mathcal G \in \Omega_N} \mu_N(\mathcal A_N(\mathcal G)) = \lim_{N \to \infty} \max_{\mathcal G \in \Omega_N} \Prob_N(\mathcal A_N(\mathcal G)) = 0
\end{align*}
and the number of representatives satisfies $|\mathcal R_N| \xrightarrow{N \to \infty} \infty$.
\end{corollary}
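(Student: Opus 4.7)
The plan is to prove the maximum-class-probability claim under each measure, from which $|\mathcal R_N| \to \infty$ follows immediately via the pigeonhole inequality $\max_\mathcal R \nu_N(\mathcal A_N(\mathcal R)) \geq 1/|\mathcal R_N|$, valid for either measure $\nu_N \in \{\mu_N, \Prob_N\}$ as an immediate consequence of $\sum_\mathcal R \nu_N(\mathcal A_N(\mathcal R)) = 1$.

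For the uniform measure, I would use the identity $\mu_N(\mathcal A_N(\mathcal R)) = |\mathcal A_N(\mathcal R)|/|\Omega_N| = 2^{m_N(\mathcal R)}/|\Omega_N|$ (since $|\mathcal A_N(\mathcal R)| = 2^{m_N(\mathcal R)}$ by Lemma \ref{partition}), bounding the numerator by $2^{N-Z(N)} \leq 2^N$: by Theorem \ref{shortest_game} the total number of combine moves in any game is $N-Z(N)$, and $m_N(\mathcal R)$ counts a subset of these. For the denominator, I would combine Theorem \ref{lower_bound} with the asymptotic $\text{Cat}(m) \geq c_0 \cdot 4^m/m^{3/2}$ and the identity $\sum_{k=1}^{n-2} F_k = F_n - 2$ (in the paper's indexing), together with $F_n = (1+o(1)) N/\varphi$, to obtain $|\Omega_N| \geq 2^{2N/\varphi - o(N)}$. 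Since $2/\varphi > 1$, this yields $\max_\mathcal R \mu_N(\mathcal A_N(\mathcal R)) \leq 2^{-(2/\varphi - 1)N + o(N)} \to 0$ uniformly in $\mathcal R$.

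For $\Prob_N$, I would split on the value of $m_N(\mathcal R)$. For $\mathcal R$ with $m_N(\mathcal R) < n^d$ (any fixed $d \geq 2$), Corollary \ref{many_decisions} directly gives $\Prob_N(\mathcal A_N(\mathcal R)) \leq \Prob_N(m_N(\mathcal G) < n^d) \to 0$ uniformly. For $\mathcal R$ with $m_N(\mathcal R) \geq n^d$, summing the individual game probabilities $\Prob_N(\mathcal G_X) = P_0(\mathcal R) \prod_i A_i^{-1} n_i^{-X_i}$ over all bit-vectors $X \in \{0,1\}^{m_N(\mathcal R)}$ (which decompose cleanly by Lemma \ref{independence}) yields the closed form
\[
\Prob_N(\mathcal A_N(\mathcal R)) \;=\; P_0(\mathcal R) \prod_{i=1}^{m_N(\mathcal R)} \frac{1 + 1/n_i}{A_i},
\]
where $P_0(\mathcal R) \leq 1$ collects the shared probabilities of the non-$\mathcal E$ moves, $A_i \geq 2$ is the number of playable moves at the $i$th $\mathcal E$-position (at least $C_k$ and $S_k$ are available), and $n_i \geq 1$ is the number playable after the corresponding $S_k$ (always including $C_{k-1}$). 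Every factor $(1+1/n_i)/A_i$ is at most $1$, with equality if and only if $A_i = 2$ and $n_i = 1$.

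The main obstacle is establishing that a positive fraction $\alpha > 0$ (independent of $N$) of the $m_N(\mathcal R)$ factors satisfy $A_i \geq 3$ or $n_i \geq 2$. Given such a claim, each such factor is bounded by $\max\{2/3, 3/4\} = 3/4$, yielding $\prod_i (1 + 1/n_i)/A_i \leq (3/4)^{\alpha m_N(\mathcal R)} \leq (3/4)^{\alpha n^d} \to 0$. The intuition is that the degenerate configuration $(A_i, n_i) = (2,1)$ forces an extremely rigid local board-state — essentially $h_k = 2$, $h_{k-1} = 1$, with $h_{k-2}$, $h_{k+1}$, $h_{k-3}$ all zero and no other combine or split playable anywhere on the entire board — so it cannot occur at too many consecutive $\mathcal E$-positions without violating the flow of the game. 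I would attempt to quantify this via an amortized state-space analysis, tracking how the surrounding bin-heights must evolve between two successive degenerate $\mathcal E$-positions and showing that a minimum number of moves with richer local structure must intervene. Making this combinatorial rigidity argument precise is the crux of the $\Prob_N$ case; the corresponding bound for $\mu_N$ instead sidesteps the issue entirely by exploiting the cardinality of $\Omega_N$.
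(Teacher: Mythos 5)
Your argument for the uniform measure is correct and genuinely different from the paper's: combining $|\mathcal A_N(\mathcal R)| = 2^{m_N(\mathcal R)} \leq 2^{N-Z(N)}$ (valid since $\mathcal I_N(\mathcal R)$ indexes a subset of the $N-Z(N)$ combine moves guaranteed by Theorem \ref{shortest_game}) with the Catalan lower bound of Theorem \ref{lower_bound} and $\sum_{k=1}^{n-2}F_k = F_n-2$ does give $|\Omega_N| \geq 2^{2F_n - o(N)}$ with $2F_n \geq (2/\varphi - o(1))N$ and $2/\varphi > 1$, so the ratio decays exponentially, uniformly in $\mathcal R$. This is a clean entropy-versus-counting argument that the paper does not use. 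The reduction of $|\mathcal R_N| \to \infty$ to the max-probability claim via pigeonhole is also fine.

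However, the $\Prob_N$ case has a genuine gap, and you have correctly located it yourself: the product formula $\Prob_N(\mathcal A_N(\mathcal R)) = P_0(\mathcal R)\prod_i (1+1/n_i)/A_i$ is an easy consequence of Lemma \ref{independence}, but it yields nothing unless a positive (or at least unbounded) number of factors are bounded away from $1$, i.e., unless you rule out the degenerate configuration $A_i = 2$, $n_i = 1$ at all but $o(m_N(\mathcal R))$ of the $\mathcal E$-positions. That rigidity claim is the entire content of the bound, it is not obviously true for every representative $\mathcal R$ with $m_N(\mathcal R) \geq n^d$ (the maximum in the statement runs over all of $\Omega_N$, not just typical games), and the ``amortized state-space analysis'' is only named, not executed. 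The paper avoids this local analysis entirely: it shows (via the proof of Lemma \ref{many_decisions} with $d=1$) that with probability tending to $1$ a game contains $m \geq n$ instances of $(C_1, C_1, C_2)$, and for each such instance builds a set $\mathcal B_i^N(\mathcal R)$ of games that agree with $\mathcal A_N(\mathcal R)$ up to that point but play $C_2$ in place of the second $C_1$. These $m$ sets are pairwise disjoint and each satisfies $\Prob_N(\mathcal B_i^N(\mathcal R)) \geq \Prob_N(\mathcal A_N(\mathcal R))$, whence $\Prob_N(\mathcal A_N(\mathcal R)) \leq 1/m \leq 1/n$; the same displacement argument covers $\mu_N$. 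If you want to salvage your route, you would either need to prove the positive-fraction rigidity claim or replace it with a comparison argument of this flavor, which sidesteps any pointwise control of the playable-move counts $A_i$ and $n_i$.
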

\begin{proof}
Again, proceed on the probability measure $\Prob_N$, as the analysis carries over exactly for $\mu_N$. The proof of Lemma \ref{many_decisions} (taking $d=1$) determines there are at least $m \geq n$ instances of $(C_1,C_1,C_2)$ in the first $\lfloor N/6 \rfloor$ moves of a random game $\mathcal G \in \Omega_N$, with probability tending to $1$ in the limit $N \to \infty$: we proceed studying such a game $\mathcal G$. Writing $\mathcal G \in \mathcal A_N(\mathcal R)$, every latter instance of $C_1$ in one of these $m$ sequences is in\footnote{This is in the sense that this instance of $C_1$ is not compressed in achieving $\mathcal R$ from $\mathcal G$ (see the proof of Lemma \ref{partition}), and is not replaced with a term of form $\mathcal E_k$ in achieving the base sequence $\mathcal M(\mathcal R)$.} any game in $\mathcal A_N(\mathcal R)$, and the move $C_2$ could have been played instead of this instance of $C_1$. For $i = 1, \dots, m$, denote $\mathcal B_i^N(\mathcal R) \subseteq \Omega_N$ to be those games consistent with a game in $\mathcal A_N(\mathcal R)$ up to the $i$th such instance of $C_1$, but for which the move $C_2$ is played instead. It is immediately observed that the sets $\mathcal B_i^N(\mathcal R)$ are disjoint and that $\Prob_N(\mathcal A_N(\mathcal R)) \leq \Prob_N(\mathcal B_i^N(\mathcal R))$ for each $i=1,\dots,m$, so we have
\begin{align*}
    n\cdot \Prob_N(\mathcal A_N(\mathcal R)) \leq m\cdot \Prob_N(\mathcal A_N(\mathcal R)) \leq \sum_{i=1}^m \Prob_N(\mathcal B_i^N(\mathcal R)) \leq 1 \implies \Prob_N(\mathcal A_N(\mathcal R)) \leq \frac{1}{n} \xrightarrow{N \to \infty} 0
\end{align*}
so we have $\lim_{N \to \infty} \max_{\mathcal G \in \Omega_N} \Prob_N(\mathcal A_N(\mathcal G)) = 0$, and $|\mathcal R_N| \xrightarrow{N \to \infty} \infty$ follows immediately.
\end{proof}

Finally, we need the following simple result concerning the behavior of a binomial random variable with sufficiently large variance. Certainly, the case $Z=2$ in Lemma \ref{binom} corresponds to studying the expressions $\Prob\left(\mathcal B \text{ is odd}\right)$ and $\Prob\left(\mathcal B \text{ is even}\right)$.
\begin{lemma} \label{binom}
Consider a binomial random variable $\mathcal B = \text{Bin}(m, p)$. For any values of $\epsilon > 0$ and $Z \in \mathbb N$, there exists a constant $N(\epsilon, Z)$ such that if $\Var(\mathcal B) = mp(1-p) \geq N(\epsilon,Z)$ (i.e., if the variance of $\mathcal B$ is sufficiently large), then for any $z \in \{0,1,\dots, Z-1\}$,
\begin{align*}
    \left| \Prob\left(\mathcal B \equiv z \text{ mod } Z \right) - \frac{1}{Z} \right| \leq \epsilon. 
\end{align*}
\end{lemma}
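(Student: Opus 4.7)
The plan is to apply the \emph{roots of unity filter}, which recovers the probability of residue classes mod $Z$ from the probability generating function (equivalently, the characteristic function evaluated at roots of unity). Writing $\omega = e^{2\pi i / Z}$, we have the identity
\begin{align*}
    \Prob\left(\mathcal B \equiv z \text{ mod } Z\right) = \frac{1}{Z}\sum_{j=0}^{Z-1} \omega^{-jz}\,\E\!\left[\omega^{j\mathcal B}\right] = \frac{1}{Z}\sum_{j=0}^{Z-1} \omega^{-jz}\left(1-p+p\omega^j\right)^m,
\end{align*}
using the fact that $\mathcal B$ is a sum of $m$ i.i.d.\ Bernoulli$(p)$ random variables, so its probability generating function is $(1-p+ps)^m$. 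The $j=0$ summand contributes exactly $1/Z$, so the error $\left|\Prob(\mathcal B \equiv z \text{ mod } Z) - 1/Z\right|$ is bounded by $\frac{1}{Z}\sum_{j=1}^{Z-1}|1-p+p\omega^j|^m$.

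Next, I would compute the modulus explicitly. Setting $\theta_j = 2\pi j / Z$, a direct expansion gives
\begin{align*}
    |1-p+p\omega^j|^2 = (1-p)^2 + 2p(1-p)\cos\theta_j + p^2 = 1 - 2p(1-p)(1-\cos\theta_j).
\end{align*}
For each $j \in \{1, 2, \dots, Z-1\}$, we have $1-\cos\theta_j > 0$, so let $\delta_Z := \min_{1\le j \le Z-1}(1-\cos\theta_j) = 1-\cos(2\pi/Z) > 0$, a constant depending only on $Z$. Using the elementary inequality $1-x \le e^{-x}$ for $x \in [0,1]$, we obtain the clean bound
\begin{align*}
    |1-p+p\omega^j|^m \leq \left(1 - 2p(1-p)\delta_Z\right)^{m/2} \leq \exp\!\left(-mp(1-p)\delta_Z\right) = \exp\!\left(-\delta_Z \Var(\mathcal B)\right).
\end{align*}

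Combining these estimates yields
\begin{align*}
    \left| \Prob\!\left(\mathcal B \equiv z \text{ mod } Z \right) - \frac{1}{Z} \right| \leq \frac{Z-1}{Z}\exp\!\left(-\delta_Z \Var(\mathcal B)\right),
\end{align*}
which is at most $\epsilon$ as soon as $\Var(\mathcal B) \geq \delta_Z^{-1}\log\!\left((Z-1)/(Z\epsilon)\right)$. Taking $N(\epsilon, Z)$ to be this quantity (or any larger value) completes the proof. There is no real obstacle here: the entire argument rides on the algebraic identity for $|1-p+p\omega^j|^2$ and the fact that $\delta_Z$ is a positive constant independent of $p$ and $m$, so the exponential decay is controlled purely by the variance $mp(1-p)$, exactly as the statement requires.
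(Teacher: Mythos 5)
Your proof is correct, and it takes a genuinely different route from the paper's. You use the roots-of-unity filter: the discrepancy from $1/Z$ is controlled by the characteristic function of $\mathcal B$ at the nontrivial $Z$th roots of unity, and the identity $|1-p+p\omega^j|^2 = 1-2p(1-p)(1-\cos\theta_j)$ converts this directly into exponential decay in $\Var(\mathcal B)$, uniformly in $p$ and $m$ since $\delta_Z = 1-\cos(2\pi/Z)$ depends only on $Z$. The paper instead argues combinatorially from unimodality of the binomial pmf: the probabilities $p_k$ increase up to the mode $\lfloor (m+1)p\rfloor$ and decrease after, the peak value tends to $0$ as the variance grows (via Stirling), and a pairing argument on each monotone piece shows that any two residue classes mod $Z$ capture nearly equal mass, forcing each class to be within $\epsilon$ of $1/Z$. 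Your Fourier-analytic argument is shorter and quantitatively much stronger — it yields an explicit bound $\frac{Z-1}{Z}e^{-\delta_Z \Var(\mathcal B)}$, i.e., exponential equidistribution, versus the paper's implicit polynomial rate of order $\Var(\mathcal B)^{-1/2}$ coming from the peak pmf value — while the paper's argument is more elementary in that it uses no complex exponentials and would adapt to any unimodal (not necessarily binomial) distribution with a vanishing mode. Either suffices for the application, where only the qualitative statement is used.
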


\begin{proof}
The value $\Prob(\mathcal B = k)$ increases on $k \leq \lfloor (m+1)p \rfloor$ and decreases on $k \geq \lfloor (m+1)p \rfloor$, and $\max_{0 \leq k \leq m} \Prob(\mathcal B = k) = \Prob\left(\mathcal B = \lfloor (m+1)p \rfloor \right) \xrightarrow{mp(1-p)\to \infty} 0$ (use Stirling); denoting $p_k = \Prob(\mathcal B = k)$, write $p_0 \leq p_1 \leq \cdots \leq p_{\lfloor (m+1)p \rfloor}$ and $p_{\lfloor (m+1)p \rfloor} \geq p_{\lfloor (m+1)p \rfloor+1}\geq\cdots\geq p_m$. Fix $\epsilon > 0$, and choose $N(\epsilon,Z)$ such that $mp(1-p) \geq N(\epsilon,Z)$ implies $\Prob\left(\mathcal B = \lfloor (m+1)p \rfloor \right) < \frac{\epsilon}{2}$. Certainly, for any distinct values $z_1 < z_2$ in $\{0,1,\dots,Z-1\}$, the number of terms $p_0, p_1, \dots, p_{\lfloor (m+1)p \rfloor}$ with indices equal to $z_1$ modulo $Z$ and equal to $z_2$ modulo $Z$ is either the same or there exists one more such term corresponding to $z_1$: we can switch the ``dominant modulus" by removing the term of largest index equal to either $z_1$ or $z_2$ modulo $Z$, for which the corresponding term $p_k \leq \frac{\epsilon}{2}$ by choice of the constant $N(\epsilon,Z)$. The analogous statement extends to $p_{\lfloor (m+1)p \rfloor}, p_{\lfloor (m+1)p \rfloor+1}, \dots, p_m$, so it follows that we can write
\begin{align*}
    \left| \Prob\left(\mathcal B \equiv z_1 \text{ mod } Z \right) - \Prob\left(\mathcal B \equiv z_2 \text{ mod } Z \right) \right| \leq \epsilon
\end{align*}
which immediately yields the desired statement.
\end{proof}

\noindent We are now ready to proceed with the proof of Theorem \ref{many_players}.
\begin{proof}[Proof of Theorem \ref{many_players}]
Fix an integer $Z \geq 2$ corresponding to the number of players in a $Z$-player Zeckendorf game, and some value $z \in \{0,1,\dots,Z-1\}$. As in the proof of Lemma \ref{many_decisions_exp}, we strictly concern ourselves with the probability measure $\Prob_N$, as the analysis carries over exactly for the uniform measure $\mu_N$ (again, it is much looser than necessary for $\mu_N$). For the probability measure $\Prob_N$, since $m_N(\mathcal G)$ is fixed at $|\mathcal I_N(\mathcal R)| = m_N(\mathcal R)$ on any set $\mathcal A_N(\mathcal R)$ for $\mathcal R \in \mathcal R_N$, for $N \geq N_1(\epsilon)$,
\begin{align} \label{eq:small_decisions_bd}
    \mathop{\sum_{\mathcal R \in \mathcal 
    R_N}}_{m_N(\mathcal R) < 2n^3} \Prob_N\left(\mathcal A_N(\mathcal R)\right) \leq \epsilon.
\end{align}
Consider a fixed representative $\mathcal R \in \mathcal R_N$ for which $m_N(\mathcal R) \geq 2n^3$: by Corollary \ref{num_bins}, there are at most $2n$ distinct values of the parameters $p_i = \frac{1}{1+n_i}$ of the corresponding random variables $X_1^{\mathcal R}, X_2^{\mathcal R}, \dots, X_{m_N(\mathcal R)}^{\mathcal R}$, and thus by the pigeonhole principle, the number of instances of the value of $p_i$ with largest multiplicity is at least $\frac{2n^3}{2n} = n^2 \xrightarrow{N \to \infty} \infty$; henceforth call this $p$. Let us say there are $m \geq n^2$ instances of this value of $p_i$: we can study the sum $\mathcal B_{\mathcal R} = \sum_{i=1}^{m} Y_i$, where the $Y_i$ correspond to those random variables $X_i^{\mathcal R}$ with this corresponding success probability $p$. Furthermore, by Lemma \ref{independence}, the random variables $Y_i$ are independent when conditioned on the event $\mathcal A_N(\mathcal R)$, so $\mathcal B_{\mathcal R} \stackrel{d}{=} \text{Bin}(m, p)$ under this conditional distribution. From the bounds on $p$ from Corollary \ref{num_bins}, it follows that the variance of this binomial random variable, when conditioned on the event $\mathcal A_N(\mathcal R)$, has the exploding lower bound
\begin{align*}
    \Var(\mathcal B_{\mathcal R}) = mp(1-p) \geq \frac{n^2}{2(2n-2)} \xrightarrow{N \to \infty} \infty
\end{align*}
i.e., the binomial random variable $\mathcal B_{\mathcal R}$ has variance exploding in the limit $N \to \infty$ (for any $\mathcal R \in \mathcal R_N$). By Lemma \ref{binom}, the random variable $\mathcal B_{\mathcal R}$ will take a value equal to $z$ modulo $Z$ with probability approaching $\frac{1}{Z}$ as $N \to \infty$. Thus, studying the quantities given by $\Prob_N\left(\text{Game length equals } z \text{ mod } Z \ | \ \mathcal A_N(\mathcal R)\right)$, we can further condition (upon the conditioning $\mathcal A_N(\mathcal R)$) on all random variables $X_i^{\mathcal R}$ not corresponding to those $Y_i$ constituting a summand in the binomial random variable $\mathcal B_{\mathcal R}$ and appeal to the law of total probability:
\begin{align*}
    & \Prob_N\left(\text{Game length equals } z \text{ mod } Z \ | \ \mathcal A_N(\mathcal R)\right) \\
    & = \mathop{\sum_{\text{assignments } A}}_{\text{to } X_i^{\mathcal R} \neq Y_j \forall j} \Prob_N\left(\text{Game length equals } z \text{ mod } Z \ | \ \mathcal A_N(\mathcal R), A\right) \cdot \Prob_N\left(A \ | \ \mathcal A_N(\mathcal R)\right)
\end{align*}
where each term $\Prob_N\left(\text{Game length equals } z \text{ mod } Z \ | \ \mathcal A_N(\mathcal R), A\right)$ is understood as the probability that the length of the game is equal to $z$ mod $Z$ when replacing those $\mathcal E_k$ which correspond to the random variables $Y_i$, while leaving all other moves of the game fixed: this is precisely given by the binomial random variable $\mathcal B_{\mathcal R}$ added to some fixed length determined by the setting of the $X_i^{\mathcal R}$ which are not of the form $Y_j$ for some $j$. Letting $\ell(A)$ denote this length for the assignment $A$, we can thus write
\begin{align*}
    \Prob_N\left(\text{Game length equals } z \text{ mod } Z \ | \ \mathcal A_N(\mathcal R), A\right) = \Prob_N\left(\mathcal B_{\mathcal R} + \ell(A) \text{ is odd} \ | \ \mathcal A_N(\mathcal R), A\right).
\end{align*}
By Lemma \ref{independence}, the random variables $Y_i$ constituting the Bernoulli trials in $\mathcal B_{\mathcal R}$ are independent from the random variables $X_i^{\mathcal R}$ that were fixed when conditioned on $\mathcal A_N(\mathcal R)$ and the assignment $A$, and thus we can apply Lemma \ref{binom} to deduce that these conditional probabilities are arbitrarily close to $\frac{1}{Z}$ for $N \geq N_2(\epsilon,Z)$ by the preceding discussion (for sufficiently large $N_2(\epsilon,Z) \in \mathbb N$). Importantly, this is uniform over all such terms in the sum, in the sense that we can choose $N_2(\epsilon, Z)$ such that we achieve the same guarantee for any such assignment $A$ of binary values to the random variables $X_i^{\mathcal R}$ not constituting $\mathcal B_{\mathcal R}$. Thus, for any $\mathcal R \in \mathcal R_N$ satisfying $m_N(\mathcal R) \geq 2n^3$ and $N \geq N_2(\epsilon, Z)$, we have the bound
\begin{align*}
    \left| \Prob_N\left(\text{Game length equals } z \text{ mod } Z \ | \ \mathcal A_N(\mathcal R)\right) - \frac{1}{Z} \right| \leq \epsilon.
\end{align*}
Therefore, for $N \geq \max\{N_1(\epsilon), N_2(\epsilon,Z)\}$, we can take Equations (\ref{eq:total_prob_sum}) and (\ref{eq:small_decisions_bd}) to achieve the bound on the probability $\Prob_N(\text{Game length equals } z \text{ mod } Z)$ given by
\begin{align*}
    & \Prob_N(\text{Game length is } z \text{ mod } Z) = \sum_{\mathcal R \in \mathcal 
    R_N} \Prob_N\left(\text{Game length is } z \text{ mod } Z  \ | \ \mathcal A_N(\mathcal R)\right) \cdot \Prob_N\left(\mathcal A_N(\mathcal R)\right) \\
    & \leq \epsilon + \mathop{\sum_{\mathcal R \in \mathcal 
    R_N}}_{m_N(\mathcal R) \geq 2n^3} \Prob_N\left(\text{Game length is } z \text{ mod } Z \ | \ \mathcal A_N(\mathcal R)\right) \cdot \Prob_N\left(\mathcal A_N(\mathcal R)\right) \\
    & \leq \epsilon + \mathop{\sum_{\mathcal R \in \mathcal 
    R_N}}_{m_N(\mathcal R) \geq 2n^3} \left( \frac{1}{Z} + \epsilon \right) \cdot \Prob_N\left(\mathcal A_N(\mathcal R)\right) + \epsilon \leq \frac{1}{Z} + 2\epsilon
\end{align*}
and similarly, we have
\begin{align*}
    & \Prob_N(\text{Game length is } z \text{ mod } Z) = \sum_{\mathcal R \in \mathcal 
    R_N} \Prob_N\left(\text{Game length is } z \text{ mod } Z \ | \ \mathcal A_N(\mathcal R)\right) \cdot \Prob_N\left(\mathcal A_N(\mathcal R)\right) \\
    & \geq \mathop{\sum_{\mathcal R \in \mathcal 
    R_N}}_{m_N(\mathcal R) \geq 2n^3} \Prob_N\left(\text{Game length is } z \text{ mod } Z \ | \ \mathcal A_N(\mathcal R)\right) \cdot \Prob_N\left(\mathcal A_N(\mathcal R)\right) \\
    & \geq \mathop{\sum_{\mathcal R \in \mathcal 
    R_N}}_{m_N(\mathcal R) \geq 2n^3} \left( \frac{1}{Z} - \epsilon \right) \cdot \Prob_N\left(\mathcal A_N(\mathcal R)\right) \geq \left( \frac{1}{Z} - \epsilon \right)(1-\epsilon)
\end{align*}
so we conclude that for $N \geq \max\{N_1(\epsilon), N_2(\epsilon,Z)\}$,
\begin{align*}
    \epsilon^2-\frac{Z+1}{Z}\epsilon \leq \Prob_N\left(\text{Game length equals } z \text{ mod } Z \right) - \frac{1}{Z} \leq 2\epsilon
\end{align*}
which yields the desired limit by sending $\epsilon \downarrow 0$.
\end{proof}

\noindent As previously mentioned, the $Z=2$ case of Theorem \ref{many_players} immediately yields Theorem \ref{half_half}. 

\subsection{Extending the Partition on $\Omega_N$} \label{sec:extended_partitions}

Theorem \ref{half_half} establishes that if both players advance randomly, the two-player Zeckendorf game is fair in the limit $N \to \infty$: a key stage of the proof involved partitioning $\Omega_N$ into collections of games where the games in any given collection only differ via certain interchanges of $(1,k)$-prefixes with the corresponding $C_k$. We can naturally extend this partition of $\Omega_N$ (i.e., strictly larger classes in the partition) to encompass arbitrary $(\ell,k)$-prefixes or arbitrary $(\ell,k)$-suffixes. Section \ref{mixture_gaussians} will establish that for these enlarged partitions of $\Omega_N$, we can achieve an analogue of \ref{gaussianity} for the resulting sets in the partition, in the sense that with high probability in the limit, the corresponding distribution is ``nearly Gaussian," in the sense of vanishing Kolmogorov-Smirnov distance (when mean and variance are normalized to be $0$ and $1$, respectively) with the standard normal.\footnote{In this subsection and Section \ref{mixture_gaussians}, we borrow much of the same notation that was used in establishing Theorem \ref{half_half}. It will be clear from context exactly what objects we are referring to.}

We first study the generalization for arbitrary $(\ell,k)$-prefixes. Define $\mathcal R_N^{\mathcal P} \in \mathcal F_N$ to be the collection of all Zeckendorf games on input $N$ such that any combine move $C_k$, for $k \geq 2$, cannot be compressed by a $(1,k)$-prefix (and thus by any prefix of $C_k$). Explicitly,
\begin{align*}
    \mathcal R_N^{\mathcal P} = \left\{ (M_1,\dots, M_\lambda) \in \Omega_N : M_i = C_k \implies M_{i-1} \neq S_{k+1} \text{ for all } i \in [\lambda], k \geq 2 \right\}.
\end{align*}
For a game $\mathcal R = (M_1, M_2, \dots, M_\lambda) \in \mathcal R_N^{\mathcal P}$ with number of moves $\lambda$, construct the subset of indices $\mathcal I_N(\mathcal R)$ to denote all combine moves in $\mathcal R$:
\begin{align*}
    \mathcal I_N(\mathcal R) = \left\{i \in [\lambda] : M_i = C_k \text{ for some } k \geq 2 \right\}.
\end{align*}
Now, construct the \textit{formal} sequence of moves $\mathcal M(\mathcal R) = (\tilde{M}_1, \tilde{M}_2, \dots, \tilde{M}_\lambda)$ by replacing $M_i$ by a symbol $\mathcal E_k^\ell$ for all $i \in \mathcal I_N(\mathcal R)$: the subscript is the corresponding $k \geq 2$, while $\ell \geq 0$ denotes the longest $(\ell,k)$-prefix that $M_i = C_k$ can be expanded into; note in particular that $\mathcal M(\mathcal R)$ contains no combine moves. Call $\mathcal M(\mathcal R)$ the \textbf{base sequence} of $\mathcal R \in \mathcal R_N$, and let $\mathcal A_N(\mathcal R) \in \mathcal F_N$ denote the collection of all Zeckendorf games resulting from replacing each instance of $\mathcal E_k^\ell$ in $\mathcal M$ by an $(l,k)$-prefix for some $l \leq \ell$. We establish the following analogue of Lemma \ref{partition}, which yields a strictly broader partition of $\Omega_N$ (in the sense that each set in the partition given by Lemma \ref{prefix_partition} is a union of sets in the partition given by Lemma \ref{partition}).

\begin{proposition} \label{prefix_partition}
The sets $\{\mathcal A_N(\mathcal R) : \mathcal R \in \mathcal R_N^{\mathcal P}\}$ partition $\Omega_N$.
\end{proposition}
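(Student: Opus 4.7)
The plan is to exhibit a deterministic compression map $\phi: \Omega_N \to \mathcal R_N^{\mathcal P}$ such that $\mathcal G \in \mathcal A_N(\mathcal R)$ if and only if $\phi(\mathcal G) = \mathcal R$, from which the partition claim follows immediately. I would define $\phi$ by iteratively replacing any occurrence of the pair $(S_{k+1}, C_k)$ in the sequence by $C_{k+1}$ (for $k \geq 1$). Each such rewrite preserves Zeckendorf game validity because a $(1,k+1)$-prefix has the same board effect as $C_{k+1}$, and the procedure terminates since each rewrite strictly decreases the length of the sequence by one.

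To establish that $\phi$ is well-defined, I would appeal to Newman's lemma: local confluence follows because two compressible pairs at positions $(i,i+1)$ and $(j,j+1)$ with $i < j$ are necessarily non-overlapping (a combine move cannot equal a split move), so compressing either first preserves the other and yields the same end result. The terminal game $\phi(\mathcal G)$ then satisfies the defining property of $\mathcal R_N^{\mathcal P}$ by construction. For compatibility with expansion, I would show that applying $\phi$ to an expansion of $\mathcal R \in \mathcal R_N^{\mathcal P}$ recovers $\mathcal R$: each $(l,k)$-prefix compresses from right-to-left sequentially (e.g.\ $(S_{k-l+1}, C_{k-l}) \to C_{k-l+1}$, then $(S_{k-l+2}, C_{k-l+1}) \to C_{k-l+2}$, and so on) back to the original $C_k$. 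The membership condition of $\mathcal R_N^{\mathcal P}$ then guarantees that no resulting $C_k$ is preceded by $S_{k+1}$ in the base sequence, so compression does not spuriously bleed across expansion boundaries.

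Coverage follows by taking $\mathcal R = \phi(\mathcal G)$ and reversing the compressions to realize $\mathcal G$ as an expansion of $\mathcal R$'s placeholders by the corresponding $(l,k)$-prefixes, while disjointness follows because $\mathcal G \in \mathcal A_N(\mathcal R_1) \cap \mathcal A_N(\mathcal R_2)$ would force $\phi(\mathcal G) = \mathcal R_1 = \mathcal R_2$. The main obstacle I anticipate is verifying the boundary behavior in the compatibility step: one must rule out a compression initiated within an expansion propagating backward into a preceding expansion or fixed move of the base sequence. The key structural insight is that the defining condition of $\mathcal R_N^{\mathcal P}$ is exactly calibrated to prevent a dangling $S_{j+1}$ immediately before a $C_j$ in the base sequence, so compressions stay cleanly localized to individual $(l,k)$-prefixes and correctly invert each expansion.
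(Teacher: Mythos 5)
Your argument is correct and lands on the same canonical representative as the paper---the fully compressed game in which no $C_k$ is immediately preceded by $S_{k+1}$---but it organizes the verification genuinely differently. The paper proves disjointness by an elementary first-difference argument: given distinct $\mathcal R_1, \mathcal R_2 \in \mathcal R_N^{\mathcal P}$, it locates the first index at which they differ and checks case by case that any two expansions must also differ, invoking the defining condition of $\mathcal R_N^{\mathcal P}$ only to rule out the one delicate case where one game plays $C_k$ directly while the other opens an $(l,k)$-prefix; coverage is then handled essentially as you do, by compressing the maximal chain of split moves preceding each combine move. Your route instead packages both halves into the well-definedness of the normal-form map $\phi$: termination is immediate from length, local confluence follows from your (correct) observation that two redexes of the form $(S_{k+1}, C_k)$ cannot overlap since the shared position would have to be simultaneously a combine and a split, and Newman's lemma then gives uniqueness of normal forms, from which disjointness is one line. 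This is cleaner and more robust---it transfers verbatim to the suffix partition of Proposition \ref{suffix_partition}---at the cost of the compatibility step $\phi(\text{expansion of } \mathcal R) = \mathcal R$, which you correctly reduce to the fact that $\mathcal R \in \mathcal R_N^{\mathcal P}$ is itself a normal form and that a partially compressed prefix still begins with a split move, so no compression can bleed across block boundaries. The only detail worth spelling out in the coverage step is that the reversed compressions assign to each combine move of $\mathcal R$ an $(l,k)$-prefix with $l$ no larger than the maximal length $\ell$ recorded in the base sequence; this is immediate because $\mathcal G$ itself witnesses that the length-$l$ expansion is playable.
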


\begin{proof}
Observe that a base sequence $\mathcal M(\mathcal R) = (\tilde{M}_1, \tilde{M}_2, \dots, \tilde{M}_\lambda)$ uniquely determines $\mathcal R \in \mathcal R_N^{\mathcal P}$ by its explicit split moves moves and the subscripts of each of its symbols $\mathcal E_k^\ell$. This is true up to any initial subsequence, in the sense that there exists at most one $\mathcal R \in \mathcal R_N^{\mathcal P}$ which agrees with the moves and symbols $\mathcal E_k^\ell$ up to subscript.

We show that the sets $\mathcal A_N(\mathcal R)$ are disjoint. Take distinct $\mathcal R_1 = (M_1^1,\dots, M_\lambda^1), \mathcal R_2 = (M_1^2,\dots, M_{\lambda'}^2) \in \mathcal R_N^{\mathcal P}$ with base sequences $\mathcal M_1(\mathcal R_1) = (\tilde{M}_1^1, \tilde{M}_2^1, \dots, \tilde{M}_\lambda^1)$ and $\mathcal M_2(\mathcal R_2) = (\tilde{M}_1^2, \tilde{M}_2^2, \dots, \tilde{M}_{\lambda'}^2)$, respectively. Since $\mathcal R_1 \neq \mathcal R_2$, take smallest $i$ for which $M_i^1 \neq M_i^2$, and study the construction of any two games $\mathcal G_1 \in \mathcal A_N(\mathcal R_1)$ and $\mathcal G_2 \in \mathcal A_N(\mathcal R_2)$ as described above. If $\mathcal G_1$ and $\mathcal G_2$ are consistent prior to $\tilde{M}_i^1$ and $\tilde{M}_i^2$ (i.e., for $k \geq 2$, all $\mathcal E_k^\ell$ are replaced by the same prefix of $C_k$), which is the only way the two games remain equal up to this point, then we produce a difference in the two games on $\tilde{M}_i^1$ and $\tilde{M}_i^2$ if $M_i^1 = \tilde{M}_i^1$ and $M_i^2 = \tilde{M}_i^2$ ($M_i^1 \neq M_i^2$) or $M_i^1 \neq \tilde{M}_i^1$ and $M_i^2 \neq \tilde{M}_i^2$ ($\tilde{M}_i^1 = \mathcal E_{k_1}^{\ell_1}$, $\tilde{M}_i^2 = \mathcal E_{k_2}^{\ell_2}$, $k_1 \neq k_2$). If $M_i^1 = \tilde{M}_i^1$ and $M_i^2 \neq \tilde{M}_i^2$ (say $\tilde{M}_i^2 = \mathcal E_k^\ell$, so $M_i^2 = C_k$), then $\mathcal G_2$ is next filled with either $C_k \neq M_i^1$ ($M_i^1 = M_i^2$ if $C_k = M_i^1$) or an $(l,k)$-prefix for some $l \leq \ell$. In the latter case, if the two games were equal until move $i+l-1$, necessarily $\mathcal R_1$ contains a prefix of length at least $1$, contradicting $\mathcal R_1 \in \mathcal R_N^{\mathcal P}$. 

We therefore conclude that $\mathcal A_N(\mathcal R_1) \cap \mathcal A_N(\mathcal R_2) = \emptyset$, i.e., the sets $\mathcal A_N(\mathcal R)$ for $\mathcal R \in \mathcal R_N$ are disjoint; it remains to show that any game $\mathcal G \in \Omega_N$ is in some set $\mathcal A_N(\mathcal R)$. For $\mathcal G \in \Omega_N$, let $\mathcal R$ be the game resulting from replacing every combine move $C_k$ with the longest playable $(\ell,k)$-prefix. It follows immediately from construction that $\mathcal R \in \mathcal R_N^{\mathcal P}$, and that $\mathcal G \in \mathcal A_N(\mathcal R)$.
\end{proof}

Similarly, we define the generalization for arbitrary $(\ell,k)$-suffixes. Define $\mathcal R_N^{\mathcal S} \in \mathcal F_N$ to be the collection of all Zeckendorf games on input $N$ such that any combine move $C_k$, for $k \geq 2$, cannot be compressed by a $(1,k)$-suffix (and thus by any suffix of $C_k$). Explicitly,
\begin{align*}
    \mathcal R_N^{\mathcal S} = \left\{ (M_1,\dots, M_\lambda) \in \Omega_N : M_i = C_k \implies M_{i+1} \neq S_{k+1} \text{ for all } i \in [\lambda], k \geq 2 \right\}.
\end{align*}
For a game $\mathcal R = (M_1, M_2, \dots, M_\lambda) \in \mathcal R_N^{\mathcal S}$ with number of moves $\lambda$, construct the subset of indices $\mathcal I_N(\mathcal R)$ to denote all combine moves in $\mathcal R$:
\begin{align*}
    \mathcal I_N(\mathcal R) = \left\{i \in [\lambda] : M_i = C_k \text{ for some } k \geq 2 \right\}.
\end{align*}
Now, construct the \textit{formal} sequence of moves $\mathcal M(\mathcal R) = (\tilde{M}_1, \tilde{M}_2, \dots, \tilde{M}_\lambda)$ by replacing $M_i$ by a symbol $\mathcal E_k^\ell$ for all $i \in \mathcal I_N(\mathcal R)$: the subscript is the corresponding $k \geq 2$, while $\ell$ denotes the longest $(\ell,k)$-suffix that $M_i = C_k$ can be expanded into. Call $\mathcal M(\mathcal R)$ the \textbf{base sequence} of $\mathcal R \in \mathcal R_N$, and let $\mathcal A_N(\mathcal R) \in \mathcal F_N$ denote the collection of all Zeckendorf games resulting from replacing each instance of $\mathcal E_k^\ell$ in $\mathcal M$ by an $(l,k)$-suffix for some $l \leq \ell$. We establish the following analogue of Lemma \ref{partition}.

\begin{proposition} \label{suffix_partition}
The sets $\{\mathcal A_N(\mathcal R) : \mathcal R \in \mathcal R_N^{\mathcal S}\}$ partition $\Omega_N$.
\end{proposition}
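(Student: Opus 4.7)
The plan is to mirror the proof of Proposition \ref{prefix_partition}, replacing every use of the prefix equivalence by the suffix equivalence $(C_{k-\ell} \to S_{k-\ell+1} \to \cdots \to S_k) \equiv C_k$ from Definition \ref{prefix_suffix}. As in the prefix case, I would first record the observation that the base sequence $\mathcal M(\mathcal R)$ uniquely determines $\mathcal R \in \mathcal R_N^{\mathcal S}$: its explicit splitting moves and the subscripts $k$ of the symbols $\mathcal E_k^\ell$ (which sit at the positions of the combine moves $C_k$ in $\mathcal R$) are enough to reconstruct $\mathcal R$ step by step, while the superscripts $\ell$ are forced by the board state at that point.

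For disjointness, I would take distinct $\mathcal R_1, \mathcal R_2 \in \mathcal R_N^{\mathcal S}$ and the least index $i$ where they disagree, then study arbitrary games $\mathcal G_1 \in \mathcal A_N(\mathcal R_1)$ and $\mathcal G_2 \in \mathcal A_N(\mathcal R_2)$ which agree up to position $i-1$ (this forces all earlier $\mathcal E$-symbols in both representatives to be expanded with the same suffix lengths). A case analysis at position $i$ handles the easy situations: if both $\tilde{M}_i^j$ are explicit moves they differ directly, and if both are $\mathcal E_{k_j}^{\ell_j}$ with distinct subscripts $k_1 \neq k_2$, the first move of their expansions (some combine move $C_{k_j - l_j}$) necessarily differs. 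The nontrivial case is when one side (say $\mathcal R_2$) has $\tilde{M}_i^2 = \mathcal E_k^\ell$ expanded to a nontrivial $(l,k)$-suffix starting with $C_{k-l} \to S_{k-l+1}$, while $\mathcal R_1$ carries an explicit $M_i^1$: matching $\mathcal G_1$ to this expansion would force $M_i^1 = C_{k-l}$ and $M_{i+1}^1 = S_{k-l+1}$ (the latter because $\mathcal E$-symbols always expand into a combine move first, never a split move), but then $\mathcal R_1$ contains the forbidden pattern $C_{k-l} \to S_{k-l+1}$, contradicting $\mathcal R_1 \in \mathcal R_N^{\mathcal S}$.

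For coverage, given $\mathcal G \in \Omega_N$ I would iteratively rewrite each occurrence of a pattern $C_j \to S_{j+1}$ as $C_{j+1}$ until no such pattern remains; equivalently, collapse each maximal $(\ell,k)$-suffix in $\mathcal G$ to a single $C_k$. The resulting game $\mathcal R$ contains no $C_k$ directly followed by $S_{k+1}$, so $\mathcal R \in \mathcal R_N^{\mathcal S}$. The collapsed suffix lengths then prescribe, for each symbol $\mathcal E_k^\ell$ in $\mathcal M(\mathcal R)$, a particular $(l,k)$-suffix to re-expand; because the rewrite $C_j \to S_{j+1} \mapsto C_{j+1}$ preserves board states, each such $l$ does not exceed the maximal admissible $\ell$, and the expansion recovers $\mathcal G$. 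Hence $\mathcal G \in \mathcal A_N(\mathcal R)$.

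The main obstacle, as in Proposition \ref{prefix_partition}, is the asymmetric sub-case of the disjointness argument, where one must show that any attempt to match a bare combine move in one representative against a nontrivial suffix expansion in the other inevitably creates a $C \to S$ pattern outlawed by the definition of $\mathcal R_N^{\mathcal S}$. Everything else is a mechanical reversal of the proof of Proposition \ref{prefix_partition}, with the role played by prefixes preceding combines taken over by suffixes following them.
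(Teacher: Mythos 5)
Your plan---mirror the proof of Proposition \ref{prefix_partition} with prefixes replaced by suffixes---is exactly what the paper intends (it omits the proof, asserting the analogy), and your coverage argument is fine. But two places where the suffix setting is \emph{not} a mechanical mirror of the prefix setting are mishandled. The sub-case you dismiss as easy, namely $\tilde{M}_i^1 = \mathcal E_{k_1}^{\ell_1}$ and $\tilde{M}_i^2 = \mathcal E_{k_2}^{\ell_2}$ with $k_1 \neq k_2$, is precisely where the analogy breaks: a nontrivial $(\ell,k)$-prefix opens with $S_k$, so in the prefix proof the subscript is visible from the first move, but an $(l,k)$-suffix opens with $C_{k-l}$, and $k_1 \neq k_2$ does not imply $C_{k_1-l_1} \neq C_{k_2-l_2}$. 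For instance a $(2,5)$-suffix and a $(1,4)$-suffix both open with $C_3$, and the shorter expansion $C_3 \to S_4$ is an initial segment of the longer one $C_3 \to S_4 \to S_5$. So your claim that ``the first move of their expansions necessarily differs'' is false. The case is salvageable, but only by the continuation argument you reserve for the ``nontrivial'' case: if, say, $k_1 < k_2$ and the games agree past the end of $\mathcal R_1$'s shorter expansion, then $\mathcal G_1$'s next move $S_{k_1+1}$ must come from an explicit split $M_{i+1}^1$ (symbols expand to sequences opening with a combine), so $\mathcal R_1$ contains $C_{k_1} \to S_{k_1+1}$ with $k_1 \geq 2$, which is forbidden.

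The second issue is the $C_1$ boundary, which your ``nontrivial case'' silently papers over. Since only $C_k$ with $k \geq 2$ are replaced by symbols, an explicit $M_i^1$ is a split move or $C_1$; hence the matching $M_i^1 = C_{k-l}$ you describe can only occur with $k-l = 1$, and the pattern you then need to outlaw is $C_1 \to S_2$, i.e., the $(1,2)$-suffix. The displayed definition of $\mathcal R_N^{\mathcal S}$ forbids $C_k \to S_{k+1}$ only for $k \geq 2$ and therefore does \emph{not} exclude $C_1 \to S_2$; under that literal definition the proposition fails: for $N = 4$ one has $\Omega_4 = \{(C_1,C_2),\ (C_1,C_1,S_2)\}$, both games lie in $\mathcal R_4^{\mathcal S}$, and $(C_1,C_1,S_2)$ lies in both $\mathcal A_4\bigl((C_1,C_2)\bigr)$ and $\mathcal A_4\bigl((C_1,C_1,S_2)\bigr)$. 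Your coverage step (collapsing $C_j \to S_{j+1}$ for every $j$, including $j=1$) and your disjointness step both require the stronger condition forbidding $C_1 \to S_2$, which matches the paper's prose description (the $(1,2)$-suffix compressing to $C_2$ is $C_1 \to S_2$) but not its displayed formula. You must state explicitly that you are working with the corrected definition; as written, your ``forbidden pattern'' conclusion does not follow.
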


\noindent We do not provide the proof of Proposition \ref{suffix_partition}, as it is pursued analogously to the proof of Proposition \ref{prefix_partition}. The easy observation that each set $\mathcal A_N(\mathcal R)$, for $\mathcal R \in \mathcal R_N^{\mathcal P}$ or $\mathcal R \in \mathcal R_N^{\mathcal S}$, is a union of equivalence classes of the corresponding sets studied in the proof of Theorem \ref{half_half}, yields the following trivial extension of Lemma \ref{many_decisions}, where we promote the notation $\mathcal A_N(\mathcal G) = \mathcal A_N(\mathcal R)$ for the unique $\mathcal R$ satisfying $\mathcal G \in \mathcal A_N(\mathcal R)$.

\begin{proposition} \label{many_decisions_generalized}
Say $F_n \leq N < F_{n+1}$. For any $c \in (0, \varphi)$ (with $\varphi$ the golden ratio),
\begin{align*}
    & \lim_{N \to \infty} \mu_N \left( \log_2 \left| \mathcal A_N(\mathcal G) \right| \geq c^n \right) = 1,
    & \lim_{N \to \infty} \Prob_N \left( \log_2 \left| \mathcal A_N(\mathcal G) \right| \geq c^n \right) = 1,
\end{align*}
which holds for either of the understandings $\mathcal R \in \mathcal R_N^{\mathcal P}$ and $\mathcal R \in \mathcal R_N^{\mathcal S}$.
\end{proposition}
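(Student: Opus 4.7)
The plan is to exploit that the partitions induced by $\mathcal{R}_N^{\mathcal{P}}$ and $\mathcal{R}_N^{\mathcal{S}}$ are each \emph{coarsenings} of the $(1,k)$-prefix partition of Lemma \ref{partition} (respectively, its mirror $(1,k)$-suffix analog, obtained by the same argument as Lemma \ref{partition} applied to suffix compressions). As the excerpt records immediately before the statement, every new class $\mathcal{A}_N(\mathcal{R})$ is a union of old equivalence classes $\mathcal{A}_N^{\mathrm{old}}(\mathcal{G})$. Since coarsening only grows classes, any lower bound on $\log_2 |\mathcal{A}_N^{\mathrm{old}}(\mathcal{G})|$ transfers to $\log_2 |\mathcal{A}_N(\mathcal{G})|$, and Lemma \ref{many_decisions_exp} supplies exactly such a bound.

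To concretely verify the coarsening claim (the one substantive step), I would observe that an $(\ell, k)$-prefix compression $(S_k, S_{k-1}, \ldots, S_{k-\ell+1}, C_{k-\ell}) \to C_k$ decomposes into $\ell$ successive $(1,j)$-prefix compressions: first compress the rightmost pair $(S_{k-\ell+1}, C_{k-\ell}) \to C_{k-\ell+1}$, then iterate on the resulting $(\ell-1, k)$-prefix. Hence any two games related by the finer $(1,k)$-prefix relation from the proof of Theorem \ref{half_half} remain related under the broader prefix relation defining $\mathcal{R}_N^{\mathcal{P}}$, so indeed $\mathcal{A}_N^{\mathrm{old}}(\mathcal{G}) \subseteq \mathcal{A}_N(\mathcal{G})$. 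The suffix case is identical by symmetry.

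Combining these ingredients, recall from the proof of Theorem \ref{half_half} that each symbol $\mathcal{E}_k$ in the old base sequence independently admits two choices (either $C_k$ or $(S_k, C_{k-1})$), so $|\mathcal{A}_N^{\mathrm{old}}(\mathcal{G})| = 2^{m_N(\mathcal{G})}$; the containment above then gives
\begin{equation*}
    \log_2 |\mathcal{A}_N(\mathcal{G})| \;\geq\; \log_2 |\mathcal{A}_N^{\mathrm{old}}(\mathcal{G})| \;=\; m_N(\mathcal{G}).
\end{equation*}
Invoking Lemma \ref{many_decisions_exp} with any $c \in (0, \varphi)$ yields $m_N(\mathcal{G}) \geq c^n$ with probability tending to $1$ under both $\mu_N$ and $\Prob_N$, which establishes the proposition. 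The real work was already done in Lemma \ref{many_decisions_exp}; no further obstacle remains beyond the elementary coarsening observation in the preceding paragraph.
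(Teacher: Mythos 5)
Your proposal is correct and takes essentially the same route as the paper, which likewise derives the proposition as a ``trivial extension'' of Lemma \ref{many_decisions_exp} via the observation that each new class $\mathcal A_N(\mathcal R)$ is a union of the equivalence classes from Lemma \ref{partition}, so that $\log_2|\mathcal A_N(\mathcal G)| \geq m_N(\mathcal G)$. If anything, your explicit decomposition of an $(\ell,k)$-prefix into iterated $(1,j)$-prefix compressions, and your separate treatment of the suffix case via a mirrored partition, is more careful than the paper's one-line justification.
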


Finally, we have the following extension of Corollary \ref{num_bins}. We also omit the proof of this result, as it is a straightforward generalization of the proof of the aforementioned result.
\begin{proposition} \label{num_bins_generalized}
Say $F_n \leq N < F_{n+1}$, fix some $\mathcal R \in \mathcal R_N^{\mathcal P}$, and define random variables $X_1^{\mathcal R}, X_2^{\mathcal R}, \dots, X_{N-1}^{\mathcal R}$ corresponding to the length of the expansion corresponding to each combine move for a game in $\mathcal A_N(\mathcal R)$. Under the measures $\mu_N$ and $\Prob_N$ conditioned on $\mathcal A_N(\mathcal R)$, the random variables $X_i^{\mathcal R}$ are independent.
\end{proposition}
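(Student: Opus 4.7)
The plan is to generalize the argument from Lemma \ref{independence}, exploiting the key property (Definition \ref{prefix_suffix}) that an $(l,k)$-prefix has the same net effect on the board as the single move $C_k$. This means that although the intermediate states differ, the configuration of the multiset immediately after any expansion slot $\mathcal E_k^\ell$ is determined entirely by $\mathcal R$, independently of the chosen expansion length $l \in \{0, 1, \dots, \ell\}$ and of the choices made at other slots. In particular, the multiset state at the start of each slot is fixed by $\mathcal R$ alone, and therefore so is the number of playable moves at every step that precedes or separates the expansion slots.

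For the measure $\Prob_N$, I would write $\Prob_N(\mathcal G)$ for $\mathcal G \in \mathcal A_N(\mathcal R)$ as the usual product of reciprocals $\frac{1}{m_j}$ over all moves, and then group the factors into two types: (a) factors from moves appearing explicitly in the base sequence $\mathcal M(\mathcal R)$ (split moves not inside any expansion), and (b) factors from moves inside the expansion of some slot $\mathcal E_{k_i}^{\ell_i}$. By the observation above, type (a) factors depend only on $\mathcal R$, while for each slot $i$, the type (b) factors depend only on $\mathcal R$ and the chosen expansion length $l_i$. This yields a factorization
\[
\Prob_N(\mathcal G) \;=\; c(\mathcal R) \cdot \prod_i f_i(l_i),
\]
where $f_i(l_i)$ is the product of the $l_i + 1$ reciprocals arising from the $(l_i,k_i)$-prefix in slot $i$. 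Summing over $\mathcal A_N(\mathcal R)$ and dividing gives
\[
\Prob_N(\mathcal G \mid \mathcal A_N(\mathcal R)) \;=\; \prod_i \frac{f_i(l_i)}{\sum_{l=0}^{\ell_i} f_i(l)},
\]
which is a product of marginal probabilities, hence the independence claim.

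For the uniform measure $\mu_N$, the argument is simpler: each joint assignment $(l_1, \dots, l_{|\mathcal I_N(\mathcal R)|})$ with $l_i \leq \ell_i$ corresponds to exactly one game in $\mathcal A_N(\mathcal R)$, and all games in $\Omega_N$ receive equal mass, so the conditional joint law of $(X_1^{\mathcal R}, \dots, X_{N-1}^{\mathcal R})$ is uniform on the product set $\prod_i \{0, 1, \dots, \ell_i\}$, which is equivalent to independence with uniform marginals.

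The main technical point to justify is the claim underpinning the factorization: once the state at the beginning of slot $i$ is fixed, the number of playable moves at each intermediate step of any $(l,k)$-prefix played in that slot is determined. Since an $(l,k)$-prefix is a fixed deterministic sequence of moves on the multiset, this follows by a short induction on $l$, tracking bin heights after each split. Once this is in hand, the proof transfers verbatim to $\mathcal R \in \mathcal R_N^{\mathcal S}$ by replacing prefixes with suffixes throughout (whose net effect on the board is also $C_k$ by Definition \ref{prefix_suffix}).
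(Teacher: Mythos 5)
Your proposal is correct and follows essentially the route the paper intends: the paper omits this proof, deferring to the argument of Lemma \ref{independence}, whose core is exactly your observation that an $(l,k)$-prefix has the same net effect as $C_k$, so the board state (and hence the count of playable moves) at every point outside a given slot is fixed by $\mathcal R$ alone. Your direct factorization $\Prob_N(\mathcal G) = c(\mathcal R)\prod_i f_i(l_i)$ is a cleaner packaging of the paper's ratio-and-normalization argument, and the uniform-measure case and the transfer to $\mathcal R_N^{\mathcal S}$ are handled as the paper would.
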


\section{Weak Convergence of Random Game Lengths as $N \to \infty$} \label{mixture_gaussians}

In pursuit of the resolution of Conjecture \ref{gaussianity}, it may be productive to ask whether there exist natural subsets of $\Omega_N$ on which the distribution of random game lengths converge weakly to a Gaussian in the limit $N \to \infty$ of infinite input. As discussed in Section \ref{sec:extended_partitions}, the partitions defined in Propositions \ref{prefix_partition} and \ref{suffix_partition} enjoy this property in the sense described in Theorem \ref{weak_mixtures}, restated below.

\weakMixtures*

To explicitly define the distributions referenced in Theorem \ref{weak_mixtures}, define a probability distribution $\Prob_N^{\mathcal R}: 2^{\mathcal A_N(\mathcal R)} \to [0,1]$ via, for any $\mathcal S \in 2^{\mathcal A_N(\mathcal R)}$,
\begin{align*}
    \Prob_N^{\mathcal R}(\mathcal S) = \frac{\Prob_N\left(\mathcal S \cap \mathcal A_N(\mathcal R) \right)}{\Prob_N\left(\mathcal A_N(\mathcal R) \right)}.
\end{align*}
Define the random variable $L_N^{\mathcal R}: \mathcal A_N(\mathcal R) \to \R$ on the space $\left(\mathcal A_N(\mathcal R), 2^{\mathcal A_N(\mathcal R)}, \Prob_N^{\mathcal R}\right)$ by 
\begin{align*}
    L_N^{\mathcal R}\left((M_1,\dots, M_\lambda)\right) = \lambda
\end{align*}
for any game $(M_1,\dots, M_\lambda) \in \mathcal A_N(\mathcal R)$, i.e., $L_N^{\mathcal R}$ studies game lengths in $\mathcal A_N(\mathcal R)$. Then 
\begin{align}
    F_{\mathcal R}(x) = \Prob_N^{\mathcal R}\left( \frac{L_N^{\mathcal R} - \E\left[L_N^{\mathcal R}\right]}{\sqrt{\Var\left( L_N^{\mathcal R} \right)}} \leq x \right).
\end{align}
In other words, when we restrict $\Prob_N$ to the sets $\mathcal A_N(\mathcal R)$ in the natural sense, Theorem \ref{weak_mixtures} states that the distribution of random game lengths enjoys weak convergence to a Gaussian with high probability. We write
\begin{align} \label{eq:cond_dist}
    \mathcal L_N^{\mathcal R} = \frac{L_N^{\mathcal R} - \E\left[L_N^{\mathcal R}\right]}{\sqrt{\Var\left( L_N^{\mathcal R} \right)}} 
\end{align}
to be the random variable $L_N^{\mathcal R}$ normalized to have mean $0$ and variance $1$. Throughout this section, we proceed on the measure $\Prob_N$ and the set $\mathcal R_N^{\mathcal P}$, but the analysis carries over to the uniform measure $\mu_N$ and the suffix partition $\mathcal R_N^{\mathcal S}$.

\begin{proof}[Proof of Theorem \ref{weak_mixtures}]
By Proposition \ref{many_decisions_generalized}, $\lim_{N \to \infty} \Prob_N \left( \log_2 \left| \mathcal A_N(\mathcal G) \right| \geq 1.6^n \right) = 1$, so fix $\mathcal R \in \mathcal R_N$ such that $|\mathcal A_N(\mathcal R)| > 2^{1.6^n}$, and consider the random variable $\mathcal L_N^{\mathcal R}: \mathcal A_N(\mathcal R) \to \R$. Define random variables\footnote{Define these to be strictly positive: if the $i\textsuperscript{th}$ delimiter is replaced with a combine move, say $X_i^{\mathcal R} = 1$.} $X_1^{\mathcal R}, \dots, X_{N-1}^{\mathcal R}$ to correspond to each of the combine moves in $\mathcal R$, with $X_i^{\mathcal R}$ denoting the length of the corresponding prefix for a game in $\mathcal A_N(\mathcal R)$: by Proposition \ref{num_bins_generalized}, the random variables $X_i^{\mathcal R}$ are independent. Expanding Equation (\ref{eq:cond_dist}),
\begin{align} \label{eq:cond_dist_2}
    \mathcal L_N^{\mathcal R} = \frac{L_N^{\mathcal R} - \E\left[L_N^{\mathcal R}\right]}{\sqrt{\Var\left( L_N^{\mathcal R} \right)}} = \frac{ \sum_{i=1}^{N-1} \left( X_i^{\mathcal R} - \E\left[X_i^{\mathcal R} \right] \right)}{\sqrt{\Var\left( L_N^{\mathcal R} \right)}}
\end{align}
where the independence (conditioned on $\mathcal A_N(\mathcal R)$) of the random variables $X_i^{\mathcal R}$ yields that
\begin{align*}
    \Var\left( L_N^{\mathcal R} \right) = \sum_{i=1}^{N-1}  \Var\left( X_i^{\mathcal R} - \E\left[X_i^{\mathcal R} \right] \right)
\end{align*}
and furthermore, taking the maximum over those summands $X_i^{\mathcal R}$ which are non-constant\footnote{Indeed, we understand $\mathcal L_N^{\mathcal R}$ as the sum over the nonconstant random variables amongst those included by the sum in Equation (\ref{eq:cond_dist_2})},
\begin{align} \label{eq:berry_esseen_bd}
    \max_{1 \leq i \leq D} \left( \frac{\E\left[\left|X_i^{\mathcal R} - \E\left[X_i^{\mathcal R} \right]\right|^3\right]}{\E\left[\left(X_i^{\mathcal R} - \E\left[X_i^{\mathcal R} \right]\right)^2\right] \cdot \Var\left( L_N^{\mathcal R} \right)} \right) \leq \max_{1 \leq i \leq D} \frac{2n^3}{\Var\left( L_N^{\mathcal R} \right)} \leq \frac{n^3}{\Var\left( 2^{1.6^n} \right)} 
\end{align}
and the final expression in Equation (\ref{eq:berry_esseen_bd}) certainly vanishes as $N \to \infty$. Thus, the desired result follows immediately from the case of the Berry-Esseen theorem for independent non-identically distributed summands, which namely yields
\begin{align*}
    \sup_{x \in \R} \left|F_{\mathcal R}(x) - \Phi(x) \right| \leq C \cdot \frac{n^3}{\Var\left( 2^{1.6^n} \right)} \xrightarrow{N \to \infty} 0
\end{align*}
for some universal constant $C > 0$.
\end{proof}
Theorem \ref{weak_mixtures} yields that, when restricted to particular natural subsets of games, we have Gaussianity. The scope of this result is admittedly restricted, especially given the result of Corollary \ref{cor:likeliest_rep}. 

\section{Open Problems}

We conclude the work with several potential directions for further inquiry.

\subsection{Other Two-Player Games Based on Recurrences} Theorem \ref{half_half} can be interpreted as saying that if two players proceed mindlessly, the Zeckendorf game is fair in the limit of infinite input; Theorem \ref{many_players} gives that the analogous statement would be true if we were to extend to a $Z$-player Zeckendorf game. Many papers (such as \cite{baily_generalized_2021, GeneralizedZeckendorfGame, boldyriew2020extending}) have extended the paradigm of the two-player Zeckendorf game to other recurrences: we might ask which of these also enjoy this property. In particular, we pose the following conjecture, concerning the two-player Bergman game (\cite{baily_generalized_2021}).

\begin{conjecture} \label{bergman_conj}
In the limit $N \to \infty$ of infinite input, the probabilities of Player 1 and Player 2 winning, under both analogous definitions of random Bergman games, is $1/2$.
\end{conjecture}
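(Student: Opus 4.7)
The plan is to mirror the partition-and-concentration strategy used to establish Theorem \ref{half_half} and Theorem \ref{many_players} in the Zeckendorf setting. The crucial input there was the existence of a short ``parity swap'' --- two equivalent move sequences on a common board configuration whose lengths differ by exactly one, namely the combine $C_k$ versus the $(1,k)$-prefix $(S_k, C_{k-1})$ --- together with the combinatorial fact that every game contains many independent swap opportunities. Once both ingredients are in place, Lemma \ref{binom} does the rest: a binomial with exploding variance is equidistributed modulo $2$ in the limit, and the law of total probability over the partition $\{\mathcal A_N(\mathcal R)\}$ gives the $1/2$ conclusion.

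First I would identify a parity swap in the Bergman game of \cite{baily_generalized_2021}. The relations underlying base-$\varphi$ expansions ($\varphi^n = \varphi^{n-1}+\varphi^{n-2}$, together with the identity $\varphi + \varphi^{-2} = \varphi^2$ and its shifts) should furnish at least one pair of playable move sequences having the same net effect but differing in length. With such a swap in hand, I would reconstruct the machinery of Lemma \ref{partition} verbatim: designate as the canonical representative $\mathcal R$ of each equivalence class the unique game in which the compression has been applied wherever it is legal, build a base sequence with symbols $\mathcal E$ marking each remaining swap site, and let $\mathcal A_N(\mathcal R)$ collect all ways to fill in those symbols. The analog of Lemma \ref{independence} should then follow by the same bijection/quotient argument, yielding that conditioned on $\mathcal A_N(\mathcal R)$ the choices at the swap sites are independent Bernoulli random variables, with parameter $1/2$ under the uniform measure and $p_i = 1/(1+n_i)$ under the random-play measure (where $n_i$ is the number of playable moves at the corresponding intermediate position).

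Next I would establish the Bergman analog of Lemma \ref{many_decisions_exp}, showing that with probability tending to $1$ the number of swap sites in a random game grows superpolynomially in the natural size parameter $n$. The template is exactly that of the proof of Lemma \ref{many_decisions_exp}: fix a short move sequence (playing the role of $(C_1,C_1,C_2)$) whose occurrence forces a swap site; partition the first $\Theta(N)$ moves of a random game into disjoint blocks of that length; argue via an analog of Corollary \ref{num_bins} that the per-block probability is at least $1/\mathrm{poly}(n)$; and apply Chernoff. Combining this with the binomial concentration of Lemma \ref{binom} and the total-probability decomposition over $\mathcal R$ immediately yields $\lim_N \Prob_N(\text{Player 1 wins}) = 1/2$, and the identical argument handles $\mu_N$.

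The main obstacle is the first step. In the Zeckendorf game, the split and combine moves interchange cleanly via a single index shift, so a length-one swap exists and is frequently playable. In the Bergman game the moves mix positive and negative powers of $\varphi$, so the minimal swap template could be longer and require more restrictive local configurations. If such swaps occur only at configurations whose relative frequency vanishes as $N \to \infty$, the binomial variance estimate breaks and one must either assemble several swap templates into a single combined ``coin flip'' or seek a fundamentally different source of independence (perhaps exploiting symmetry between positive and negative $\varphi$-indices). A secondary, more technical point is verifying an analog of Corollary \ref{num_bins} --- that the local probabilities $p_i$ are bounded below by $1/\mathrm{poly}(n)$ --- which should follow from a uniform bound on the number of move types available at any configuration, but must be checked against the precise rules of the Bergman game.
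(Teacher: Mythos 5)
This statement is posed in the paper as an open conjecture (Conjecture \ref{bergman_conj}); the authors give no proof, and in fact immediately after stating it they identify essentially the same obstacles you do. Your proposal is therefore a research plan rather than a proof, and the plan has a genuine, unresolved gap at its foundation: you never exhibit a concrete parity swap in the Bergman game. Everything downstream --- the partition into classes $\mathcal A_N(\mathcal R)$, the independence of the Bernoulli choices \`a la Lemma \ref{independence}, the variance blow-up feeding into Lemma \ref{binom} --- is conditional on that first step, and you explicitly defer it (``the relations \dots should furnish at least one pair of playable move sequences''). Until a specific pair of equivalent move sequences of different lengths is written down and verified against the actual Bergman move set of \cite{baily_generalized_2021}, the argument does not start.

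The second gap is the frequency estimate. The paper's proof of Lemma \ref{many_decisions_exp} leans on a very particular feature of the Zeckendorf game: the triple $(C_1, C_1, C_2)$ is playable with probability at least $1/\mathrm{poly}(n)$ whenever $h_1 \geq 5$, and bin $1$ starts with $N$ tokens and drains slowly, so one gets $\Theta(N)$ disjoint blocks each contributing a swap opportunity. In the Bergman game the analogue of $C_1$ is a \emph{split} move that consumes no tokens, which changes both the termination structure and the supply of cheap swap sites; the paper flags exactly this (``Lemma \ref{many_decisions} also depends on the number of a specific move sequence which may not have an analogue in the Bergman game''). Your proposal acknowledges that if swap configurations become rare the binomial variance estimate breaks, but offers only speculative remedies. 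So while your overall strategy is the natural one and matches the route the authors themselves sketch as the intended attack, the two load-bearing steps --- existence of a swap and its $\Omega(\mathrm{poly}(n))$ or better frequency --- are asserted rather than proved, and the conjecture remains open.
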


The core challenges of proving Conjecture \ref{bergman_conj} are as follows. The principal difference between the two-player Bergman game and the two-player Zeckendorf game is that the move $C_1$ is now a split move which consumes no tokens. Thus, there are structural differences between the two games which affects the range of achievable game lengths. Lemma \ref{many_decisions} also depends on the number of a specific move sequence which may not have an analogue in the Bergman game.

\subsection{Towards Gaussianity: Other Approaches} 

Theorem \ref{weak_mixtures} establishes that certain natural partitions of $\Omega_N$ are such that the components become arbitrarily close to being Gaussian (in the sense of vanishing Kolmogorov-Smirnov distance) with arbitrarily high probability: it is unclear how to extend this to the entirety of $\Omega_N$. Proposition \ref{suffix} remarks that all games in $\Omega_N$ can be achieved by permutations of suffixes from a shortest game, which suggests the following question.

\begin{question}
Can we extend the techniques in Section \ref{mixture_gaussians} to a partition founded on the greedy embedding of Proposition \ref{suffix}? In particular, could studying the moments of the corresponding components lead to a proof of Conjecture \ref{gaussianity}?
\end{question}

Another possible direction is to restrict our attention to certain subsets of moves across all of $\Omega_N$, rather than certain subsets of $\Omega_N$. In particular, a sufficiently strong affirmative answer to Question \ref{gaussian_moves} would resolve Conjecture \ref{gaussianity}.

\begin{question} \label{gaussian_moves}
Is the distribution of the number of occurrences of a particular combine or split move asymptotically Gaussian, under either the measure $\mu_N$ or $\Prob_N$?
\end{question}

\subsection{Towards Gaussianity: Mixing}

We outline one more possible approach towards proving Conjecture \ref{gaussianity}, which relies on the literature surrounding \textit{mixing central limit theorems}: these are analogues of the central limit theorem concerning sums of dependent random variables, applicable if the dependencies amongst the summands are sufficiently well-behaved. In particular, the main result of \cite{ekstrom2014general} states the following.
\begin{theorem}[\cite{ekstrom2014general}]
Let $\{X_{N,i}: 1 \leq i \leq d_N\}$ be a triangular array of random variables defined on the probability space $(\Omega, \mathcal F, \Prob)$, $\Bar{X}_{N,d_N} = \frac{1}{d_N} \sum_{i=1}^{d_N} X_{N,i}$, and $\alpha_N: \N \to \R$ by 
\begin{align} \label{ekstrom:alpha}
    \alpha_N(k) = \sup_m \sup_{A \in \mathcal F_0^m(N), B \in \mathcal F_{m+k}^\infty(N)} \left|\Prob(A \cap B) - \Prob(A) \Prob(B)\right|
\end{align}
where $\mathcal F_{m_1}^{m_2}(N) = \sigma\left(X_{N,m_1}, X_{N,m_1+1}, \dots, X_{N,m_2} \right)$ denotes the $\sigma$-algebra generated by random variables $X_{N,m_1}, X_{N,m_1+1}, \dots, X_{N,m_2}$. If there exists constants $C_1, C_2 > 0$, $\delta > 0$ such that
\begin{align} \label{ekstrom:cond1}
    \E\left[\left| X_{N,i} - \E[X_{N,i}] \right|^{2+\delta}\right] < C_1
\end{align}
and 
\begin{align} \label{ekstrom:cond2}
    \sum_{k=0}^\infty (k+1)^2 \alpha_N^{\frac{\delta}{4+\delta}}(k) < C_2,
\end{align}
then 
\begin{align} \label{ekstrom:result}
    \sqrt{d_N}\left(\Bar{X}_{N,d_N} - \E\Bar{X}_{N,d_N} \right) \xrightarrow[N \to \infty]{\mathcal D} N\left(0, \Var\left( \sqrt{d_N} \Bar{X}_{N,d_N} \right) \right).
\end{align}
\end{theorem}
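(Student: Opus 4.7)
The plan is to execute the big block/small block scheme of Bernstein, which is the canonical route to central limit theorems for $\alpha$-mixing sequences. Normalize by setting $Y_{N,i} = X_{N,i} - \E[X_{N,i}]$ and $S_N = \sum_{i=1}^{d_N} Y_{N,i}$; the conclusion (\ref{ekstrom:result}) then becomes $S_N/\sqrt{d_N} \Rightarrow \mathcal N(0, \sigma_N^2)$ with $\sigma_N^2 = \Var(S_N)/d_N$. Fix integer block sizes $p = p_N$ (big) and $q = q_N$ (small) with $q/p \to 0$, $p/d_N \to 0$, and, crucially, $d_N\,\alpha_N(q) \to 0$; since condition (\ref{ekstrom:cond2}) forces $\alpha_N(q) \to 0$ rapidly, such a schedule exists. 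Partition $\{1, \dots, d_N\}$ into $r \approx d_N/(p+q)$ alternating big and small blocks and decompose $S_N = U_N + V_N + R_N$, where $U_N$ is the sum of the big-block sums $\xi_1, \dots, \xi_r$, $V_N$ is the sum over small blocks, and $R_N$ is the leftover.

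The first step is to show $\Var(V_N) + \Var(R_N) = o(d_N)$. Davydov's covariance inequality together with (\ref{ekstrom:cond1}) gives
\begin{align*}
    |\Cov(Y_{N,i}, Y_{N,j})| \;\leq\; 8\, \|Y_{N,i}\|_{2+\delta}\, \|Y_{N,j}\|_{2+\delta}\, \alpha_N(|i-j|)^{\delta/(2+\delta)}.
\end{align*}
Since $0 \leq \alpha_N \leq 1$ and $\delta/(2+\delta) > \delta/(4+\delta)$, condition (\ref{ekstrom:cond2}) yields $\sum_{k \geq 0} \alpha_N(k)^{\delta/(2+\delta)} < \infty$ uniformly in $N$, so a sum over any block of length $\ell$ has variance $O(\ell)$ with constant depending only on $C_1, C_2, \delta$. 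This gives $\Var(V_N) + \Var(R_N) = O(rq + p + q) = o(d_N)$ for the chosen schedule.

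The second step replaces $U_N$ by an independent analogue. Let $\xi_1^{\ast}, \dots, \xi_r^{\ast}$ be independent with $\xi_j^{\ast} \stackrel{d}{=} \xi_j$. The Volkonskii--Rozanov characteristic-function inequality, a direct consequence of $\alpha$-mixing, yields
\begin{align*}
    \left| \E\bigl[ e^{it U_N/\sqrt{d_N}} \bigr] - \prod_{j=1}^{r} \E\bigl[ e^{it \xi_j^{\ast}/\sqrt{d_N}} \bigr] \right| \;\leq\; 16\, r\, \alpha_N(q),
\end{align*}
which vanishes since $r\,\alpha_N(q) \leq d_N\,\alpha_N(q) \to 0$. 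So it suffices to establish Gaussianity of the independent sum $U_N^{\ast}/\sqrt{d_N}$ via the Lyapunov CLT. A Rosenthal-type moment inequality for $\alpha$-mixing partial sums (whose constant depends only on $C_1, C_2, \delta$) yields $\E|\xi_j|^{2+\delta} \leq C\, p^{1+\delta/2}$, so Lyapunov's ratio is bounded by $r \cdot C p^{1+\delta/2}/d_N^{1+\delta/2} = O\bigl((p/d_N)^{\delta/2}\bigr)$, which vanishes. Combining this with step one transfers the Gaussian limit to $S_N/\sqrt{d_N}$ and identifies the limiting variance as $\sigma_N^2$.

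The main obstacle will be the simultaneous calibration of $p_N$ and $q_N$ to satisfy all four schedule constraints while ensuring that the Rosenthal-type bound $\E|\xi_j|^{2+\delta} \leq Cp^{1+\delta/2}$ holds with a constant depending only on $C_1, C_2, \delta$; this is where the unusual weight $(k+1)^2$ and exponent $\delta/(4+\delta)$ in (\ref{ekstrom:cond2}) become essential. The exponent $\delta/(4+\delta)$ is precisely what arises from a three-factor Hölder argument applied inside the Yokoyama--Yoshihara moment inequality for mixing sums, and the polynomial weight $(k+1)^2$ gives the summability needed to bound triple sums of covariances that appear when expanding $\E|\xi_j|^{2+\delta}$; making these estimates match up cleanly, uniformly in $N$, is the delicate technical heart of the argument.
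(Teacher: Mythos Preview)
The paper does not prove this theorem at all: it is quoted verbatim as the main result of \cite{ekstrom2014general} in the ``Open Problems'' section, purely to motivate Questions~\ref{mixing_approach} and~\ref{mixing_reduction}. There is therefore no proof in the paper to compare your proposal against.

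That said, your outline is the standard Bernstein big-block/small-block route to $\alpha$-mixing central limit theorems, and the ingredients you name (Davydov's covariance inequality, the Volkonskii--Rozanov characteristic-function bound, a Rosenthal/Yokoyama-type moment inequality, and a Lyapunov-type verification for the independent-block sum) are exactly the ones a proof of this kind requires. Your diagnosis of where the unusual exponent $\delta/(4+\delta)$ and the weight $(k+1)^2$ enter --- namely in controlling higher-moment sums uniformly in $N$ --- is also on target. One point worth flagging: the conclusion as stated has a limiting distribution whose variance parameter $\Var(\sqrt{d_N}\,\Bar{X}_{N,d_N})$ itself depends on $N$, so a fully rigorous statement either assumes this variance stabilizes or is read as asymptotic equivalence of distributions after standardization; your sketch implicitly does the latter, which is fine, but it is worth being explicit about in a complete write-up.
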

We can define a triangular array of Bernoulli random variables by letting $X_{N,i} = 1$ if and only if the $i$th move of a Zeckendorf game with input $N$ is a splitting move (in particular, $X_{N,i}$ maps to $0$ on the event that the Zeckendorf game on input $N$ terminates prior to move $i$), with $d_N$ (the number of random variables in the $N$th row) the length of the longest game on input $N$; Conjecture \ref{gaussianity} is thus equivalent to establishing that the row sums $\sum_{i=1}^{d_N} X_{N,i}$ weakly converge to a Gaussian, since the number of combine moves for a given input $N$ is constant. This is precisely the statement of Equation \ref{ekstrom:result}. Also, the events $A$ and $B$ in Equation \ref{ekstrom:alpha} can be understood as fixed $\{0,1\}$-realizations of subsets of random variables in $\{X_{N,1}, \dots, X_{N,m}\}$ and $\{X_{N,m+k}, \dots, X_{N,d_N}\}$, respectively. 

Since Equation \ref{ekstrom:cond1} certainly holds under this setup (the constituent random variables are Bernoulli), resolving Conjecture \ref{gaussianity} can be reduced to the following statement.
\begin{question} \label{mixing_approach}
For the triangular array $\{X_{N,i}: 1 \leq i \leq d_N\}$ defined above, does there exist a constant $\delta > 0$ and a constant $C > 0$ such that 
\begin{align} \label{gauss_eq}
    \sum_{k=0}^\infty (k+1)^2 \alpha_N^{\frac{\delta}{4+\delta}}(k) < C
\end{align}
for all natural numbers $N \in \N$?
\end{question}
By the preceding discussion, answering Question \ref{mixing_approach} in the affirmative by establishing \ref{gauss_eq} immediately yields Conjecture \ref{gaussianity}. In particular, it would suffice to establish the statement of Question \ref{mixing_reduction} in the affirmative, although it is not immediately clear if this should be true.

\begin{question} \label{mixing_reduction}
Does there exist a constant $\epsilon > 0$ and a constant $C > 0$ such that
\begin{align}
    \left|\Prob_N(A \cap B) - \Prob_N(A) \Prob_N(B)\right| \leq \frac{C}{k^{2+\epsilon}}
\end{align}
whenever events $A$ and $B$ correspond to fixed $\{0,1\}$-realizations of subsets of the Bernoulli random variables in $\{X_{N,1}, \dots, X_{N,m}\}$ and $\{X_{N,m+k}, \dots, X_{N,d_N}\}$, respectively?
\end{question}

Of course, the techniques introduced in the main body of this paper are quite elementary, and there may be other promising approaches not included in this section that would contribute towards resolving Conjecture \ref{gaussianity}.

\section*{Acknowledgements}

This research was conducted as part of the 2022 SMALL REU program at Williams College, and was funded by NSF Grant DMS1947438, Harvey Mudd College, and Williams College funds. We thank Professor Steven J. Miller and our colleagues from the 2022 SMALL REU program, and in particular those pursuing the Zeck Shifts project, for many helpful insights.

\section*{References}

\printbibliography[heading=none]

\end{document}